\newtheorem{thm}{Theorem}[section]
\newtheorem{prop}[thm]{Proposition}
\newtheorem{cor}[thm]{Corollary}
\theoremstyle{definition}
\newtheorem{definition}[thm]{Definition}
\theoremstyle{remark}
\newtheorem{remark}[thm]{Remark}
\numberwithin{equation}{section}
\newcommand{\N}{\mathbb{N}}  
\newcommand{\R}{\mathbb{R}}  
\newcommand{\C}{\mathbb{C}}  
\renewcommand{\Re}{\mbox{ \rm Re}}
\begin{document}

\title{\sc Fr\'echet differentiability in Fr\'echet spaces, and differential equations with unbounded variable delay}

\author{Hans-Otto Walther}

\address{Mathematisches Institut, Universit\"{a}t Gie{\ss}en,
Arndtstr. 2, D 35392 Gie{\ss}en, Germany. E-mail {\tt
Hans-Otto.Walther@math.uni-giessen.de}, phone ++49-151-58546608,
fax ++49-641-9932029}

\date{January 26, 2018}

\begin{abstract}

We introduce and discuss Fr\'echet differentiability for maps between Fr\'echet spaces. For delay differential equations $x'(t)=f(x_t)$ we construct a continuous semiflow of continuously differentiable solution operators $x_0\mapsto x_t$, $t\ge0$, on submanifolds of the Fr\'echet space $C^1((-\infty,0],\R^n)$, and establish local invariant manifolds at stationary points by means of transversality and embedding properties. The results apply to examples with unbounded
but locally bounded delay.

\bigskip

\noindent
MSC 2010: 34 K 05, 37 L 05

\medskip

\noindent
Keywords:  Fr\'echet space, Fr\'echet differentiability, delay differential equation, unbounded delay, semiflow, invariant manifolds

\end{abstract}

\maketitle

\section{Introduction}

Consider an autonomous  delay differential equation
\begin{equation}
x'(t)=f(x_t)
\end{equation}
with $f:U\to\R^n$ defined on a set of maps $(-\infty,0]\to\R^n$, and the segment, or history, $x_t$ of the solution $x$ at $t$ defined by $x_t(s)=x(t+s)$ for all $s\le0$. A solution on some interval $[t_0,t_e)$, $t_0<t_e\le\infty$, is a map $x:(-\infty,t_e)\to\R^n$ with $x_t\in U$ for all $t\in[t_0,t_e)$ so that the restriction of $x$ to $[t_0,t_e)$ is differentiable and Eq. (1.1) holds on this interval. Solutions on the whole real line are defined accordingly.
A toy example which can be written in the form (1.1) is the equation
\begin{equation}
x'(t)=h(x(t-r)),\quad r=d(x(t))
\end{equation}
with functions $h:\R \to\R$ and $d:\R\to[0,\infty)$. Other examples arise from pantograph equations
\begin{equation}
x'(t)=a\,x(\lambda t)+b\,x(t)
\end{equation}
with constants $a\in\C$, $b\in\R$ and $0<\lambda<1$, and from 
Volterra integro-differential equations
\begin{equation}
x'(t)=\int_0^tk(t,s)h(x(s))ds
\end{equation}
with $k:\R^{n\times n}\to\R$ and $h:\R^n\to\R^n$ continuous \cite{W9}. Eq. (1.3) is linear, and both equations (1.3) and (1.4) are non-autonomous. We shall come back to them in Section 9 below.

\medskip

Building a theory of Eq. (1.1) which (a) covers examples with state-dependent delay like Eq. (1.2) and (b) results in solution operators $x_0\mapsto x_t$, $t\ge0$,  which are continuously differentiable begins with the search for a suitable state space. For equations with bounded delay the basic steps of a solution theory were made in \cite{W1}, starting from the observation that the domain of the functional on the right hand side of the differential equation must consist of maps which are continuously differentiable, and not merely continuous as in the by-now well established theory of retarded functional differential equations \cite{HVL,DvGVLW}. Accordingly the functional $f$ in Eq. (1.1) above should be defined on a subset $U$ of the vector space $C^1=C^1((-\infty,0],\R^n)$ of continuously differentiable maps $(-\infty,0]\to\R^n$. 
Linearization as in \cite{W1} suggests that in the new theory autonomous linear equations with constant delay, like for example,
$$
x'(t)=-\alpha\,x(t-1)
$$
will appear,
which have solutions on $\R$ with arbitrarily large exponential growth at $-\infty$. In order not to loose such
solutions we stay with the full space $C^1$ and work with the topology of locally uniform convergence of maps and their derivatives, which makes $C^1$ a Fr\'echet space.

\medskip

In \cite{W7} we saw that under mild smoothness hypotheses on $f$, which hold in examples with state-dependent delay, the set
$$
X_f=\{\phi\in U:\phi'(0)=f(\phi)\}
$$ 
is a continuously differentiable submanifold of codimension $n$ in $C^1$. Notice that $X_f$ consists of the segments $x_t$, $0\le t<t_e$, of all continuously differentiable solutions $x:(-\infty,t_e)\to\R^n$ on $[0,t_e)$, $0<t_e\le\infty$, of Eq. (1.1). It is shown in \cite{W7} that these solutions constitute a continuous semiflow $(t,x_0)\mapsto x_t$ on $X_f$, with continuously differentiable solution operators $x_0\mapsto x_t$, $t\ge0$. Here continuous differentiability is understood in the sense of Michal \cite{M} and Bastiani \cite{B}, which means for a continuous map
$f:V\supset U\to W$, $V$ and $W$ topological vector spaces and $U\subset V$ open, that all directional derivatives 
$$
Df(u)v=\lim_{0\neq t\to0}\frac{1}{t}(f(u+tv)-f(u))
$$
exist and that the map
$$
U\times V\ni(u,v)\mapsto Df(u)v\in W
$$
is continuous. Let us briefly speak of $C^1_{MB}$-smoothness. 

\medskip

It is convenient to call the set $X_f$ the solution manifold associated with the map $f$.

\medskip

The mild hypotheses on $f$ mentioned above are that $f$ is $C^1_{MB}$-smooth and that 

\medskip

(e) {\it each derivative $Df(\phi):C^1\to\R^n$, $\phi\in U$, has a linear extension $D_ef(\phi):C\to\R^n$, with
the map
$$
U\times C\ni(\phi,\chi)\mapsto D_ef(\phi)\chi\in\R^n
$$
being continuous.}

\medskip

Here $C$ is the Fr\'echet space of continuous maps $(-\infty,0]\to\R^n$ with the topology of locally uniform convergence.
Property (e) is closely related to the earlier notion of being {\it almost Fr\'echet differentiable} from \cite{M-PNP}, for maps on a Banach space of continuous functions.

\medskip

An inspection of examples of differential equations with state-dependent delay for which the map $f$ in Eq. (1.1) is $C^1_{MB}$-smooth reveals that in these examples $f$ is in fact better, namely, that it is continuously differentiable in the sense of
the following definition.

\begin{definition}
A continuous map $f:V\supset U\to W$, $V$ and $W$ topological vector spaces and $U\subset V$ open, is said to be $C^1_F$-smooth if all directional derivatives exist, if each map $Df(u):V\to W$, $u\in U$, is linear and continuous, and if the map $Df:U\ni u\mapsto Df(u)\in L_c(V,W)$ is continuous with respect to the topology $\beta$ of uniform convergence on bounded sets, on the vector space $L_c(V,W)$ of continuous linear maps $V\to W$.
\end{definition}

The letter $F$ in the symbol $C^1_F$ stands for Fr\'echet because in case $V$ and $W$ are Banach spaces
$C^1_F$-smoothness is equivalent to the familiar notion of continuous differentiability with Fr\'echet derivatives,
see e. g. Proposition 3.4  below. In case $V$ and $W$ are Fr\'echet spaces $C^1_F$-smoothness is equivalent to $C^1_{MB}$-smoothness combined with the continuity of the derivative with respect to the topology $\beta$ on $L_c(V,W)$, see e. g. Proposition 3.2 below.

\medskip

It seems that $C^1_F$-smoothness of maps in Fr\'echet spaces which are not Banach spaces  has not attracted much attention, compared to $C^1_{MB}$-smoothness and further notions of smoothness \cite{SL}. For possible reasons, see \cite[Chapter II, Section 3]{B}. In any case, for the study of Eq. (1.1) the notion of $C^1_F$-smoothness is useful - and yields, of course,  slightly stronger results, compared to the theory based on $C^1_{MB}$-smoothness in \cite{W7,W8}.  The present report shows how to obtain solution manifolds, solution operators, and local invariant manifolds at stationary points, all of them $C^1_F$-smooth, and discusses Eqs. (1.2)-(1.4) as examples. We mention in this context that we do not touch upon higher order derivatives -  in \cite{KW} it is shown that solution manifolds are in general not better than $C^1_F$-smooth. The same holds for infinite-dimensional local invariant manifolds in the solution manifold, whereas finite-dimensional local invariant manifolds may be $k$ times continuously differentiable, $k\in\N$, under appropriate hypotheses on the map $f$ in Eq. (1.1) \cite{K1}.

\medskip

The present paper is divided into 3 parts. Part I with Sections 2-8 is about $C^1_F$-maps in general. Sections 2-4 collect what we need to know about $C^1_F$-maps, beginning in Section 2 with the topology $\beta$ on $L_c(V,W)$ for topological spaces $V,W$. Section 3 discusses $C^1_F$-smoothness for maps between Fr\'echet spaces, and its relations to $C^1_{MB}$-smoothness, and provides elements of calculus, including the chain rule for $C^1_F$-maps. In order to keep Section 3 short we make extensive use of \cite[Part I]{H} on $C^1_{MB}$-smoothness.  Section 4 is about $C^1_F$-submanifolds of Fr\'echet spaces. Sections 5-7 contain a uniform contraction principle, an implicit function theorem, and simple transversality- and embedding results which yield $C^1_F$-submanifolds of finite dimension or finite codimension. All of these results are familiar in the Banach space case, and most of them are well-known also in the $C^1_{MB}$-setting \cite{G1,G2,W7,W8}. In Section 8 examples illustrate the difference between $C^1_{MB}$-smoothness and $C^1_F$-smoothness. None of them is related to a delay differential equation. 

\medskip

Part II with Sections 9-12 is about the construction of the semiflow on the solution manifold of Eq.(1.1) for $C^1_F$-maps $f:C^1\supset U\to\R^n$ which have property (e).  In Section 9 these hypotheses are verified for maps $f$ related to the examples (1.2)-(1.4). Notice that for the Volterra equation (1.4) the associated  $C^1_F$-map $f$ is defined on the big space $C\supset C^1$.  - Proposition 9.5 guarantees 
that the set $X_f$ is indeed a $C^1_F$-submanifold of codimension $n$ in the space $C^1$. The proof is by Proposition 7.1 on transversality. Sections 10-12 establish a continuous semiflow on the solution manifold whose solution operators are $C^1_F$-smooth. This is parallel to the approach in \cite{W7}, and we only describe how to modify parts of \cite{W7}  in order to obtain the present result, which is stated in Section 12.

\medskip

Part III with Sections 13-17  is based on \cite{W8}. We explain how to modify constructions in \cite{W8},  in order to obtain local invariant manifolds at stationary points of the semiflow which are $C^1_F$-smooth, by means of the transversality- and embedding results from Section 7. An important ingredient is
\cite[Proposition 1.2]{W7} which says that a $C^1_{MB}$-map $f:C^1\supset U\to\R^n$ is of locally bounded delay, in the sense that

\medskip

(lbd) {\it for every $\phi\in U$ there are a neighbourhood $N(\phi)\subset U$ and $d>0$ such that for all $\chi,\psi$ in $N(\phi)$ with
$$
\chi(t)=\psi(t)\quad\text{for all}\quad t\in[-d,0]
$$
we have $f(\chi)=f(\psi)$.}

\medskip

It follows from property (lbd) that solutions of Eq. (1.1) with segments close to a stationary point $\bar{\phi}\in X_f\subset U$ are given by solutions of an equation
\begin{equation}
x'(t)=f_d(x_t)
\end{equation}
with a map $f_d:U_d\to\R^n$ defined on an open neighbourhood $U_d$ of the restriction $\bar{\phi}|_{[-d,0]}$, in the Banach space $C^1_d$ of continuously differentiable maps $[-d,0]\to\R^n$, and with the segments $x_t$ defined on $[-d,0]$. 

\medskip

The solutions of Eq. (1.5) generate a semiflow on the solution manifold $X_{f_d}=\{\psi\in U_d:\psi'(0)=f_d(\psi)\}$ in the Banach space $C^1_d$. This will be exploited in the search for local invariant manifolds in the Fr\'echet space $C^1$. For example, in Section 15 a local stable manifold at a stationary point $\bar{\phi}$ of the semiflow on $X_f\subset C^1$ appears as a preimage of the restriction map $C^1\ni\phi\mapsto\phi|_{[-d,0]}\in C^1_d$
which is transversal to a local stable manifold at $\bar{\phi}|_{[-d,0]}$ in $X_{f_d}\subset C^1_d$. The latter was obtained in \cite[Section 3.5]{HKWW}.

\medskip

In  the earlier result in \cite{W8}, on local invariant manifolds which are $C^1_{MB}$-smooth, it was necessary to add a technical hypothesis (d) which essentially requires that $f_d$ is $C^1_F$-smooth. Notice that in the present approach where $f$ is $C^1_F$-smooth and not only $C^1_{MB}$-smooth the hypothesis (d) is obsolete.

\medskip

For other work on delay differential equations with states $x_t$ in Fr\'echet spaces see \cite{Sch,S,W9}.

\medskip

{\bf Notation, preliminaries.} The closure of a subset $M$ of a topological space is denoted by $\overline{M}$ and its interior is denoted by $\stackrel{\circ}{M}$. 

\medskip

For basic facts about  topological vector spaces see \cite{R}. The vector space of continuous linear maps $V\to W$ between topological vector spaces is denoted by $L_c(V,W)$.

\medskip

Recall that a subset $B\subset T$ of a topological vector space over the field $\mathbb{K}$, $\mathbb{K}=\R$ or $\mathbb{K}=\C$, is bounded if for every neighbourhood $N$ of $0\in T$ there exists a real $r_N\ge0$ with $B\subset rN$ for all reals $r\ge r_N$.  The points of convergent sequences form bounded sets, compact sets are bounded. A set $A\subset T$ is balanced if  $zA\subset A$ for all $z\in\mathbb{K}$ with $|z|\le1$. If $A$ is balanced and $|z|\ge1$ then $A\subset zA$.

\medskip

Continuous linear maps between topological vector spaces map bounded sets into bounded sets.

\medskip

Products of topological vector spaces are always equipped with the product topology.

\begin{prop}  
\cite[Proposition 1.2]{W9} Suppose $T$ is a topological space, $W$ is a topological vector space, $M$ is a metric space with metric $d$, $g:T\times M\supset U\to W$ is continuous, $U\supset\{t\}\times K$, $K\subset M$ compact. Then $g$ is uniformly continuous on $\{t\}\times K$ in the following sense: For every neighbourhood $N$ of $0$ in $W$ there exist a neighbourhood $T_N$ of $t$ in $T$ and $\epsilon>0$ such that for all $t'\in T_N$, all $\hat{t}\in T_N$,  all $k\in K$, and all $m\in M$ with
$$
d(m,k)<\epsilon\quad\text{and}\quad (t',k)\in U,\quad (\hat{t},m)\in U
$$
we have
$$
g(t',k)-g(\hat{t},m)\in N.
$$
\end{prop}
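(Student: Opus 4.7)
The plan is the standard finite-cover argument for uniform continuity on a compactum, with the twist that the two factors $T$ and $M$ play different roles: I split the difference $g(t',k)-g(\hat t,m)$ by interpolating through $g(t,k_i)$ at a well-chosen anchor $k_i\in K$, which lies in the domain $U$ since $U\supset\{t\}\times K$.

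First I fix a balanced neighborhood $N_1$ of $0$ in $W$ with $N_1+N_1\subset N$; such $N_1$ exists by joint continuity of addition at $(0,0)$ together with the fact that balanced neighborhoods form a base at $0$ in any topological vector space. For each $k\in K$, the point $(t,k)$ lies in the open set $U$, so continuity of $g$ at $(t,k)$ provides an open neighborhood $T_k$ of $t$ in $T$ and some $\epsilon_k>0$ with $T_k\times B(k,\epsilon_k)\subset U$ and $g(t',m')-g(t,k)\in N_1$ for every $(t',m')\in T_k\times B(k,\epsilon_k)$.

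Second, I invoke compactness of $K$ to extract from $\{B(k,\epsilon_k/2):k\in K\}$ a finite subcover at points $k_1,\ldots,k_n$, and set $T_N:=\bigcap_{i=1}^n T_{k_i}$ together with $\epsilon:=\tfrac12\min_i\epsilon_{k_i}$. Given $t',\hat t\in T_N$, $k\in K$ and $m\in M$ with $d(m,k)<\epsilon$ and $(t',k),(\hat t,m)\in U$, choose $i$ with $d(k,k_i)<\epsilon_{k_i}/2$. Then $d(m,k_i)\le d(m,k)+d(k,k_i)<\epsilon+\epsilon_{k_i}/2\le\epsilon_{k_i}$, so both $(t',k)$ and $(\hat t,m)$ lie in $T_{k_i}\times B(k_i,\epsilon_{k_i})$, which yields
\[
g(t',k)-g(t,k_i)\in N_1\qquad\text{and}\qquad g(\hat t,m)-g(t,k_i)\in N_1.
\]
Subtracting and using that $N_1$ is balanced (so $-N_1=N_1$) gives $g(t',k)-g(\hat t,m)\in N_1+N_1\subset N$, as required.

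The only potentially delicate point is coordinating the choices in the first step: each $\epsilon_k$ and $T_k$ must be shrunk both to fit inside $U$ (so that $g$ is defined on a whole product neighborhood, allowing the oscillation estimate to be applied at arbitrary $(t',m')$ there) and to control the oscillation of $g$ against $N_1$. Since $U$ is open in the product topology, this is routine and I do not expect any serious obstacle beyond bookkeeping.
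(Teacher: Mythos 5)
Your argument is structurally the paper's own proof: fix $N_1$ with $N_1+N_1\subset N$, use continuity at each $(t,k)$ to get a product neighbourhood controlling the oscillation against $N_1$, cover $K$ by the half-radius balls, intersect the finitely many $T_{k_i}$, take $\epsilon=\tfrac12\min_i\epsilon_{k_i}$, and split $g(t',k)-g(\hat t,m)$ through the anchor $g(t,k_i)$ (which is legitimate since $\{t\}\times K\subset U$). Your explicit choice of a \emph{balanced} $N_1$ even tidies up the sign in the second difference, which the paper's choice of $N'$ leaves implicit.

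There is, however, one unwarranted assumption: you treat $U$ as an \emph{open} subset of $T\times M$, and you use openness essentially, both to arrange $T_k\times B(k,\epsilon_k)\subset U$ and in your closing remark that $g$ is ``defined on a whole product neighborhood''. The proposition does not assume $U$ open; it only assumes $U\supset\{t\}\times K$ and that $g$ is continuous on $U$ (in the subspace topology) --- this is precisely why the conclusion is stated conditionally on $(t',k)\in U$ and $(\hat t,m)\in U$. The extra generality is actually used later in the paper: in the proof of Proposition 11.1 the result is applied on $dom_T\times[0,T]$, which is not open in $C^1_T\times\R$. If $U$ is not open, your first step fails as stated, since no product neighbourhood of $(t,k)$ need be contained in $U$. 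The repair is small and brings you exactly to the paper's formulation: choose $T_k$ and $\epsilon_k$ so that $g(t',m')-g(t,k)\in N_1$ for all $(t',m')\in U$ with $t'\in T_k$ and $d(m',k)<\epsilon_k$ (this is what continuity of $g|_U$ at $(t,k)$ gives); your concluding step then goes through verbatim, because the hypotheses $(t',k)\in U$ and $(\hat t,m)\in U$ supply precisely the membership in $U$ needed to invoke the oscillation estimate at those two points.
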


\begin{proof}
Choose a neighbourhood $N'$ of $0$ in $W$ with $N'+N'\subset N$.  For every $k\in K$ there exist open neighbourhoods
$T(k)$ of $t$ in $T$ and $\delta(k)>0$ such that for all $t'\in T(k)$ and all $m\in M$ with $d(m,k)<\delta(k)$ and
$(t',m)\in U$ we have 
$$
g(t',m)-g(t,k)\in N',
$$
due to continuity. The compact set $K$ is contained in a finite union of open neighbourhoods
$$
\left\{m\in M:d(m,k_j)<\frac{\delta(k_j)}{2}\right\},\quad j=1,\ldots,n,
$$
with $k_1,\ldots,k_n$ in $K$. Set 
$$
\epsilon=\min\left\{\frac{\delta(k_j)}{2}:j=1,\ldots,n\right\}\quad\text{and}\quad T_N=\cap_{j=1}^nT(k_j).
$$ 
Let $t'\in T_N$, $\hat{t}\in T_N$,  $k\in K$, and $m\in M$ with
$$
d(m,k)<\epsilon,\quad (t',k)\in U,\quad (\hat{t},m)\in U
$$
be given. For some $j\in\{1,\ldots,n\}$, $d(k,k_j)<\frac{\delta(k_j)}{2}$. By the triangle inequality, $d(m,k_j)<\delta(k_j)$. It follows that
$$
g(t',k)-g(\hat{t},m)=(g(t',k)-g(t,k_j))+(g(t,k_j)-g(\hat{t},m))\in N'+N'\subset N.
$$
\end{proof}

A Fr\'echet space $F$ is a locally convex topological vector space which is complete and metrizable. The topology is given by a sequence of seminorms 
$|\cdot|_j$, $j\in\N$, which are separating in the sense that $|v|_j=0$ for all $j\in\N$ implies $v=0$. The sets
$$
N_{j,k}=\left\{v\in F:|v|_j<\frac{1}{k}\right\},\quad j\in\N\quad\text{and}\quad k\in\N,
$$
form a neighbourhood base at the origin. If the sequence of seminorms is increasing then the sets
$$
N_j=\left\{v\in F:|v|_j<\frac{1}{j}\right\},\quad j\in\N,
$$
form a neighbourhood base at the origin.

\medskip

Products of Fr\'echet spaces, closed subspaces of Fr\'echet spaces, and Banach spaces are Fr\'echet spaces.

\medskip

For a curve, a continuous map $c$ from an interval $I\subset\R$ of positive length into a Fr\'echet space $F$, the
tangent vector at $t\in I$ is 
$$
c'(t)=\lim_{0\neq h\to0}\frac{1}{h}(c(t+h)-c(t))
$$
provided the limit exists. As in \cite[Part I]{H} the curve is said to be continuously differentiable if it has tangent vectors everywhere and if the map
$$
c':I\ni t\mapsto c'(t)\in F
$$
is continuous.

\medskip

For a continuous map $f:V\supset U\to F$, $V$ and $F$ Fr\'echet spaces and $U\subset V$ open, and for $u\in U,v\in V$ the directional derivative is defined by
$$
Df(u)v=\lim_{0\neq h\to0}\frac{1}{h}(f(u+hv)-f(u))
$$
provided the limit exists. If for $u\in U$ all directional derivatives $Df(u)v$, $v\in V$ exists then the map $Df(u):V\ni v\mapsto Df(u)v\in F$ is called the derivative of $f$ at $u$. 

\medskip

For continuous maps $f:U\to F$, $V,W,F$ Fr\'echet spaces and $U\subset V\times W$ open,  partial derivatives are defined in the usual way. For example, $D_1f(v,w):V\to F$ is given by
$$
D_1f(v,w)\hat{v}=\lim_{0\neq h\to0}\frac{1}{h}(f(v+h\hat{v},w)-f(v,w)).
$$

The tangent cone of a set $M\subset F$, $F$ a Fr\'echet space, at $x\in M$ is the set $T_xM$ of all tangent vectors $v=c'(0)$ of continuously differentiable curves $c:I\to F$ with $I$ open, $0\in I$, $c(0)=x$,  $c(I)\subset M$.

\medskip

We freely use facts about the Riemann integral for continuous maps $[a,b]\to F$  into a Fr\'echet space and results from calculus based on continuous differentiability in the sense of Michal and Bastiani which can be found in  \cite[Sections I.1-I.4]{H}. 

\medskip

For maps $\R^k\supset U\to\R^n$, $U\subset\R^k$ open, $C^1_F$-smoothness and $C^1_{MB}$-smoothness are equivalent (see Propositions 3.2 and 3.3 below, for example), and we simply speak of continuously differentiable maps. Also for a curve $c:I\to\R^n$ on an interval  $I\subset\R$ of positive length and not open, we only speak of continuous differentiability, with $c'(t)=\lim_{0\neq h\to0}\frac{1}{h}(c(t+h)-c(t))=Dc(t)1\in\R^n$ at inner points and one-sided derivatives $c'(t)$  at endpoints contained in $I$.

\medskip

In Part II the following Fr\'echet spaces are used: For $n\in\N$ and $k\in\N_0$ and $T\in\R$, $C^k_T=C^k((-\infty,T],\R^n)$ denotes the Fr\'echet space of $k$-times continuously differentiable maps $\phi:(-\infty,T]\to\R^n$ with the seminorms given by
$$
|\phi|_{k,T,j}=\sum_{\kappa=0}^k\max_{T-j\le t\le T}|\phi^{(\kappa)}(t)|,\quad j\in\N,
$$ 
which define the topology of uniform convergence of maps and their derivatives on compact sets. Analogously we consider the space $C^k_{\infty}=C^k(\R,\R^n)$, with 
$$
|\phi|_{k,\infty,j}=\sum_{\kappa=0}^k\max_{-j\le t\le j}|\phi^{(\kappa)}(t)|.
$$ 
In case $T=0$ we abbreviate $C^k=C^k_0$, $|\cdot|_{k,j}=|\cdot|_{k,0,j}$, and also $C=C^0=C^0_0$, $|\cdot|_j=|\cdot|_{0,j}=|\cdot|_{0,0,j}$. In case $T=\infty$ we abbreviate $C_{\infty}=C^0_{\infty}$, $|\cdot|_{\infty,j}=|\cdot|_{0,\infty,j}$.

\medskip

The vector space $C^{\infty}=\cap_{k=0}^{\infty}C^k$ will be used without a topology on it.

\medskip

The differentiation map $\partial_{k,T}:C^k_T\ni\phi\mapsto\phi'\in C^{k-1}_T$, $k\in\N$ and $T\in\R$ or $T=\infty$,  is linear and continuous. We abbreviate $\partial_T=\partial_{1,T}$ and $\partial=\partial_0=\partial_{1,0}$.

\medskip

The following Banach spaces occur in Parts II and III:  For $n\in\N$ and $k\in\N_0$ and reals $a<b$ , $C^k([a,b],\R^n)$ denotes the Banach space of $k$-times continuously differentiable maps $[a,b]\to\R^n$ with the norm given by
$$
|\phi|_{[a,b],k}=\sum_{\kappa=0}^k\max_{a\le t\le b}|\phi^{(\kappa)}(t)|.
$$ 
We use various abbreviations, for $S<T$ and $d>0$ and $k\in\N_0$:
\begin{eqnarray*}
C_{ST} &= & C^0([S,T],\R^n),\quad |\cdot|_{ST}=|\cdot|_{[S,T],0}\\
C^1_{ST} &= & C^1([S,T],\R^n),\quad |\cdot|_{1,ST}=|\cdot|_{[S,T],1}\\
C^k_d & = & C^k([-d,0],\R^n),\quad |\cdot|_{d,k}=|\cdot|_{[-d,0],k}
\end{eqnarray*}
It is easy to see that the linear restriction maps
\begin{equation*}
R_{d,k}:C^k\to C^k_d,\quad d>0\quad\text{and}\quad k\in\N_0,
\end{equation*}
and the linear prolongation maps
\begin{equation*}
P_{d,k}:C^k_d\to C^k,\quad d>0\quad\text{and}\quad k\in\N_0,
\end{equation*}
given by $(P_{d,k}\phi)(s)=\phi(s)$ for $-d\le s\le0$ and
\begin{equation*}
(P_{d,k}\phi)(s)=\sum_{\kappa=0}^k\frac{\phi^{(\kappa)}(-d)}{\kappa!}(s+d)^{\kappa}\quad\text{for}\quad s<-d
\end{equation*}
are continuous, and for all $d>0$ and $k\in\N_0$,
\begin{equation*}
R_{d,k}\circ P_{d,k}=\text{id}_{C^k_d}.
\end{equation*}

In Part II we also need the closed subspaces
$$
C_{0T,0}=\{\phi\in C_{0T}:\phi(0)=0\}\quad\text{and}\quad C^1_{0T,0}=\{\phi\in C^1_{0T}:\phi(0)=0=\phi'(0)\}.
$$
In Part 3 we also need the Banach space $B_a$, for $a>0$ given, of all $\phi\in C$ with
$$
\sup_{t\le0}|\phi(t)|e^{at}<\infty,\quad|\phi|_a=\sup_{t\le0}|\phi(t)|e^{at},
$$
and finally, the Banach space $B^1_a$ of all $\phi\in C^1$ with
$$
\phi\in B_a,\quad\phi'\in B_a,\quad|\phi|_{a,1}=|\phi|_a+|\phi'|_a.
$$
Solutions of equations
\begin{equation*}
x'(t)=g(x_t),\quad\text{with}\quad g:C^1_d\supset U\to\R^n\quad\text{or}\quad g:B^1_a\supset U\to\R^n,
\end{equation*}
on some interval $I\subset\R$ are defined as in case of Eq. (1.1): With $J=[-d,0]$ or $J=(-\infty,0]$, respectively, they are continuously differentiable maps $x:J+I\to\R^n$ so that $x_t\in U$ for all $t\in I$ and the differential equation holds for all $t\in I$. Observe that $x_t$ may denote a map on $[-d,0]$ or on $(-\infty,0]$, depending on the context.

\medskip

The following statement on ``globally bounded delay'' for continuous linear maps corresponds to a special case of  \cite[Proposition 1.2]{W7}.

\begin{prop}\cite[Proposition 1.2]{W8}
For every continuous linear map $L:C\to B$, $B$ a Banach space, there exists $r>0$ with
$L\phi=0$ for all $\phi\in C$ with $\phi(s)=0$ on $[-r,0]$.
\end{prop}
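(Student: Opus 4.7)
The plan is to exploit the explicit seminorm description of the Fr\'echet topology on $C$: since the seminorms $|\phi|_j=\max_{-j\le t\le0}|\phi(t)|$, $j\in\N$, form an increasing separating family, continuity of a linear map from $C$ into the Banach space $B$ is equivalent to a bound by one of these seminorms. The proposition then follows immediately from the observation that $|\phi|_j=0$ whenever $\phi$ vanishes on $[-j,0]$.

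First I would use the continuity of $L$ at the origin: the preimage under $L$ of the open unit ball in $B$ is a neighbourhood of $0$ in $C$, so there exist $j\in\N$ and $k\in\N$ with
\[
|\phi|_j<\frac{1}{k}\ \Longrightarrow\ \|L\phi\|_B<1.
\]
Next, by rescaling I would deduce the global seminorm bound $\|L\phi\|_B\le k\,|\phi|_j$ for all $\phi\in C$. Concretely, for $\phi$ with $|\phi|_j>0$ one applies the implication above to $\phi/(k|\phi|_j+\varepsilon)$ and lets $\varepsilon\downarrow 0$; for $\phi$ with $|\phi|_j=0$ one uses $|\lambda\phi|_j=0<1/k$ for every $\lambda>0$, whence $\|L\phi\|_B<1/\lambda$ for all $\lambda>0$, forcing $L\phi=0$.

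Finally I would set $r=j$. If $\phi\in C$ satisfies $\phi(s)=0$ for all $s\in[-r,0]$, then $|\phi|_j=\max_{-j\le t\le0}|\phi(t)|=0$, so the bound of the previous step gives $L\phi=0$, which is the claim.

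I do not expect any real obstacle: the argument is entirely standard and amounts to the elementary fact that a continuous linear map from a Fr\'echet space (with an increasing defining sequence of seminorms) into a normed space is dominated by one of those seminorms, combined with the trivial observation that the $j$-th seminorm on $C$ only sees the values of $\phi$ on $[-j,0]$.
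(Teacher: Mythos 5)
Your proof is correct and complete: the scaling argument yields the seminorm domination $\|L\phi\|_B\le k\,|\phi|_j$ (using that the seminorms $|\cdot|_j$ on $C$ are increasing, so a single basic neighbourhood $\{|\phi|_j<1/k\}$ suffices), and taking $r=j$ gives the claim. The paper itself offers no proof, citing \cite[Proposition 1.2]{W8} instead, and your argument is exactly the standard one behind that cited result, so there is nothing to add.
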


For results on strongly continuous semigroups given by solutions of linear autonomous retarded functional differential equations
\begin{equation*}
x'(t)=\Lambda x_t
\end{equation*}
with $\Lambda:C_d\to\R^n$ linear and continuous, see \cite{DvGVLW,HVL}.

\newpage

\begin{center}
{Part I}
\end{center}

\medskip

\section{Uniform convergence of continuous linear maps on bounded sets}

\medskip

Let $V,W$ be topological vector spaces over $\R$ or $\C$. On $L_c=L_c(V,W)$ the topology $\beta$  of uniform convergence on bounded sets is defined as follows.
For a neighbourhood $N$ of $0$ in $W$ and a bounded set $B\subset V$ the neighbourhood $U_{N,B}$ of $0$ in $L_c$ is defined as
$$
U_{N,B}=\{A\in L_c:AB\subset N\}.
$$
Every finite intersection of such sets $U_{N_j,B_j}$, $j\in\{1,\ldots,J\}$, contains a set of the same kind, because we have
$$
\cap_{j=1}^JU_{N_j,B_j}\supset\{A\in L_c:A(\cup_{j=1}^JB_j)\subset\cap_{j=1}^JN_j\},
$$
finite unions of bounded sets are bounded, and finite intersections of neighbourhoods of $0$ are neighbourhoods of $0$. Then the topology $\beta$ is the set of all $O\subset L_c$ which have the property that for each $A\in O$ there exist a neighbourhood $N$ of $0$ in $W$ and a bounded set $B\subset V$ with
$A+U_{N,B}\subset O$. It is the easy to show that indeed $\beta$  is a topology.

\medskip

We call a map $A$ from a topological space $T$ into $L_c$ $\beta$-continuous at a point $t\in T$ if it is continuous at $t$ with respect to the topology $\beta$ on $L_c$.

\begin{remark} 
(i) Convergence of a sequence in $L_c$ with respect to $\beta$ is equivalent to uniform convergence on every bounded subset $B\subset V$. (Proof: By definition convergence $A_j\to A$ with respect to $\beta$ is equivalent to convergence $A_j-A\to0$ with respect to $\beta$. This means that for each neighbourhood $N$ of $0$ in $W$ and for each bounded subset $B\subset V$ there exists $j_{N,B}\in\N$ so that for all integers $j\ge j_{N,B}$, $A_j-A\in U_{N,B}$. Or, for all integers $j\ge j_{N,B}$ and all $b\in B$, $(A_j-A)b\in N$. Now the assertion becomes obvious.)

\medskip

(ii) If $V$ and $W$ are  Banach spaces then $\beta$ is the norm topology on $L_c(V,W)$ given by $|A|=\sup_{|v|\le1}|Av|$. 

\medskip

(iii) In order to verify $\beta$-continuity of a map $A:T\to L_c$, $T$ a topological space, at some $t\in T$ one has to show that, given a bounded subset $B\subset V$ and a neighbourhood $N$ of $0$ in $W$, there exists a neighbourhood $N_t$ of $t$ in $T$  such that for all $s\in N_t$ we have $(A(s)-A(t))(B)\subset N$. 

\medskip

In case $T$ has countable neighbourhood bases 
the map $A$ is $\beta$-continuous  at $t\in T$ if and only if for any sequence $T\ni t_j \to t$ we have $A(t_j)\to A(t)$. For $A(t_j)\to A(t)$ we need that given a bounded subset $B\subset V$ and a neighbourhood $N$ of $0$ in $W$, there exists $J\in\N$ with
$$
(A(t_j)-A(t))(B)\subset N\quad\text{for all integers}\quad j\ge J.
$$
In the sequel we shall use the previous statement frequently.
\end{remark}

\begin{prop} 
Singletons $\{A\}\subset L_c$ are closed with respect to the topology $\beta$, and $L_c$ equipped with the topology $\beta$ is a topological vector space.
\end{prop}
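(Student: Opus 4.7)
My plan is to verify the two topological vector space axioms---continuity of addition and continuity of scalar multiplication---directly from the definition of $\beta$ in terms of the basic neighborhoods $U_{N,B}$ of the origin, and then use Hausdorffness of $W$ to exhibit, for any $A\neq0$ in $L_c$, a basic $\beta$-neighborhood of $0$ avoiding $A$. Once $\beta$ is known to be a vector-space topology, translation invariance will reduce closedness of arbitrary singletons $\{A\}$ to closedness of $\{0\}$.

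Continuity of addition at $(A_0,A_0')$ is straightforward: given a target $U_{N,B}$, I pick by continuity of addition in $W$ a neighborhood $N'$ of $0$ with $N'+N'\subset N$, and observe that $U_{N',B}+U_{N',B}\subset U_{N,B}$, since for any $T_1,T_2\in U_{N',B}$ and $b\in B$, $(T_1+T_2)(b)=T_1b+T_2b\in N'+N'\subset N$. Hence $A_0+U_{N',B}$ and $A_0'+U_{N',B}$ are $\beta$-neighborhoods of $A_0,A_0'$ whose sum lies in $A_0+A_0'+U_{N,B}$.

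Continuity of scalar multiplication at $(\lambda_0,A_0)$ is the main technical step; I would handle it via the decomposition $\lambda A-\lambda_0A_0=\lambda(A-A_0)+(\lambda-\lambda_0)A_0$. Fix a balanced neighborhood $N'$ of $0$ in $W$ with $N'+N'\subset N$. The set $A_0B$ is bounded in $W$ (continuous linear maps preserve boundedness), so since $N'$ is balanced there is $\epsilon>0$ with $\mu A_0B\subset N'$ whenever $|\mu|\le\epsilon$; this controls the second summand. For the first summand I restrict to $|\lambda-\lambda_0|\le1$, so $|\lambda|\le|\lambda_0|+1$, and require $A-A_0\in U_{N'',B}$ with $N''=(|\lambda_0|+1)^{-1}N'$, which is a neighborhood of $0$ in $W$ since scaling by a nonzero scalar is a homeomorphism. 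Then $\lambda N''=(\lambda/(|\lambda_0|+1))N'\subset N'$ because $N'$ is balanced and $|\lambda/(|\lambda_0|+1)|\le1$, so $\lambda(A-A_0)(B)\subset N'$. Combining the two estimates, $(\lambda A-\lambda_0A_0)(B)\subset N'+N'\subset N$ whenever $|\lambda-\lambda_0|\le\min(\epsilon,1)$ and $A-A_0\in U_{N'',B}$.

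Finally, for closedness of $\{0\}$, given $A\neq0$ in $L_c$ I choose $v\in V$ with $Av\neq0$ and, by Hausdorffness of $W$, choose a neighborhood $N$ of $0$ in $W$ with $Av\notin N$. The singleton $\{v\}\subset V$ is bounded, so $U_{N,\{v\}}$ is a basic $\beta$-neighborhood of $0$, and it does not contain $A$ because $A\{v\}=\{Av\}\not\subset N$; hence $A$ lies outside the $\beta$-closure of $\{0\}$. The step I expect to be most delicate is the scalar-multiplication continuity, where the smallness of $\lambda$, $\lambda-\lambda_0$, and $A-A_0$ must be coordinated using balanced auxiliary neighborhoods and the boundedness of $A_0B$.
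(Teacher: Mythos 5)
Your verification of the two continuity axioms is correct and follows essentially the same route as the paper: basic neighbourhoods $U_{N,B}$, a balanced auxiliary neighbourhood of $0$ in $W$, and boundedness of $A_0B$. Your two-term splitting $\lambda A-\lambda_0A_0=\lambda(A-A_0)+(\lambda-\lambda_0)A_0$ with $N'+N'\subset N$ is a slightly leaner variant of the paper's three-term splitting $(c+c')(T+T')-cT=c'T+cT'+c'T'$ with $\hat{N}+\hat{N}+\hat{N}\subset N$: where the paper controls the mixed term $cT'$ via continuity of scalar multiplication in $W$, you control $\lambda(A-A_0)$ by rescaling the target neighbourhood by $(|\lambda_0|+1)^{-1}$ and using balancedness; both are sound. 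Your appeal to Hausdorffness of $W$ is also consistent with the paper's conventions (points of a topological vector space are closed in Rudin's setting, which the paper itself uses via \cite[Theorem 1.12]{R}).

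The one step that needs repair is the final inference in the closedness argument. You exhibit a neighbourhood $U_{N,\{v\}}$ of $0$ that does not contain $A$ and conclude that $A$ lies outside $\overline{\{0\}}$. That implication is not valid in an arbitrary topological space: a neighbourhood of $0$ avoiding $A$ shows only that $0\notin\overline{\{A\}}$, whereas $A\notin\overline{\{0\}}$ requires a neighbourhood of $A$ avoiding $0$ (a two-point space with open sets $\emptyset$, $\{a\}$, $\{a,b\}$ shows the two conditions are genuinely different). The patch is one line, since you have already established that $\beta$ is a vector topology: $A-U_{N,\{v\}}$ is a neighbourhood of $A$ (negation and translation are homeomorphisms), and $0\in A-U_{N,\{v\}}$ would force $A\in U_{N,\{v\}}$, i.e.\ $Av\in N$, contradicting the choice of $N$; alternatively choose $N$ balanced so that $U_{N,\{v\}}$ is symmetric and use $A+U_{N,\{v\}}$. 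Or argue as the paper does, avoiding the reduction to $\{0\}$ altogether: for each $S\neq A$ pick $v$ with $(A-S)v\neq0$ and a neighbourhood $N$ of $0$ in $W$ with $(A-S)v\notin N$; then $S+U_{N,\{v\}}$ is a $\beta$-neighbourhood of $S$ contained in $L_c\setminus\{A\}$, so the complement of $\{A\}$ is open. With this correction your proof is complete.
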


\begin{proof} 1. (On singletons) Let $A\in L_c$ be given. We show $L_c\setminus\{A\}\in\beta$. Let $S\in L_c\setminus\{A\}$. For some $b\in V$, $Ab\neq Sb$. For some neighbourhood $N$ of $0$ in $W$, $Ab\notin Sb+N$ \cite[Theorem 1.12]{R}. For all $S'\in U_{N,\{b\}}$ we have $S'b\in N$, hence
$(S+S')b\in Sb+N$, and thereby, $S+S'\neq A$. Hence $S+U_{N,\{b\}}\subset L_c\setminus\{A\}$.

\medskip

2. (Continuity of addition) Assume $S,T$ in $L_c$ and let $U_{N,B}$ be given, $N$ a neighbourhood of $0$ in $W$ and $B$ a bounded subset of $V$. We have to find neighbourhoods of $S$ and $T$ so that addition maps their Cartesian product
into $S+T+U_{N,B}$. As $W$ is a topological vector space there are neighbourhoods $N_T,N_S$
of $0$ in $W$ with $N_T+N_S\subset N$. For every $T'\in T+U_{N_T,B}$ and for every $S'\in S+U_{N_S,B}$ and for every $b\in B$ we get $((T'+S')-(T+S))b=T'b-Tb+S'b-Sb\in N_T+N_S\subset N$, hence $((T'+S')-(T+S))B\subset N$, or
$T'+S'\in T+S+U_{N,B}$.

\medskip

3. (Continuity of multiplication with scalars) In case of vector spaces over the field $\C$ let $c\in\C,T\in L_c$. Let a neighbourhood $N$ of $0$ in $W$ and a bounded subset $B\subset V$ be given, and consider the neighbourhood $U_{N,B}$ of $0$ in $L_c$. There is a neighbourhood $\hat{N}$ of $0$ in $W$ with $\hat{N}+\hat{N}+\hat{N}\subset N$, see e. g. \cite[proof of Theorem 1.10]{R}. We may assume that $\hat{N}$ is balanced \cite[Theorem 1.14]{R}. As $TB$ is bounded there exists $r_N\ge0$ such that for reals $r\ge r_N$, $TB\subset r\hat{N}$. 
We infer that for some $\epsilon>0$, $(0,\epsilon)TB\subset\hat{N}$. 
As $\hat{N}$ is balanced we obtain that for all $z\in\C$ with $0<|z|<\epsilon$, 
$$
zTB=z\frac{|z|}{|z|}TB=\frac{z}{|z|}|z|TB\subset\frac{z}{|z|}\hat{N}\subset\hat{N}.
$$
With $0\in\hat{N}$ we arrive at
$$
zTB\subset\hat{N}\quad\text{for all}\quad z\in\C\quad\text{with}\quad|z|<\epsilon.
$$
Because of continuity of multiplication $\C\times W\to W$ there are neighbourhoods $U_c$ of $0$ in $\C$ and $N_c$ of $0$ in $W$ with
$$
U_cN_c\subset\hat{N},\quad c\,N_c\subset\hat{N},\quad U_c\subset\{z\in\C:|z|<\epsilon\}.
$$
Consider $T'\in U_{N_c,B}$ and $c'\in U_c$. Observe
$$
(c+c')(T+T')=cT+(c'T+cT'+c'T').
$$
For every $b\in B$ we get
$$
(c'T+cT'+c'T')b=c'Tb+cT'b+c'T'b\in c'TB+c\,N_c+U_cN_c\subset \hat{N}+\hat{N}+\hat{N}\subset N.
$$
Therefore, $c'T+cT'+c'T'\in U_{N,B}$. It follows that
$$
(c+U_c)(T+U_{N_c,B})\subset c\,T+U_{N,B},
$$
which yields the desired continuity at $(c,T)$.
\end{proof}

\section{$C^1_F$-maps in Fr\'echet spaces} 

In this section $V, V_1, V_2, F, F_1, F_2$ always denote Fr\'echet spaces. We begin with
a few facts from \cite[Part I]{H} about $C^1_{MB}$-maps
$f:V\supset U\to F$. Each derivative $Df(u):V\to F$, $u\in U$, is linear and continuous. Differentiation of $C^1_{MB}$-maps is linear, and the chain rule holds. We have
\begin{equation}
f(v)-f(u)=\int_0^1Df(u+t(v-u))(v-u)dt\quad\text{for}\quad u+[0,1]v\subset U
\end{equation}
with the Riemann integral of continuous maps $[a,b]\to F$ from \cite[Part I]{H}. Linear continuous maps $T\in L_c(V,F)$  are $C^1_{MB}$-smooth with $DT(u)=T$ everywhere. If $f_1:V\supset U\to F_1$ and $f_2:V\supset U\to F_2$ are $C^1_{MB}$-smooth then also $f_1\times f_2:V\supset U\ni u\mapsto (f_1(u),f_2(u))\in F_1\times F_2$ is $C^1_{MB}$-smooth, with
$$
D(f_1\times f_2)(u)v=(Df_1(u)v,Df_2(u)v).
$$

\begin{prop} (see \cite[Part I]{H}). For continuous $f:V_1\times V_2\supset U\to F$, $U$ open, the following statements are equivalent.\\
(i) For every $(u_1,u_2)\in U$, $v_1\in V_1$, $v_2\in V_2$ the partial derivatives $D_kf(u_1,u_2)v_k$ exist and both maps
$$
U\times V_k\ni(u_1,u_2,v_k)\mapsto D_kf(u_1,u_2)v_k\in F,\quad k\in\{1,2\}, 
$$
are continuous.\\
(ii) $f$ is $C^1_{MB}$-smooth.\\
In this case,
$$
Df(u_1,u_2)(v_1,v_2)=D_1f(u_1,u_2)v_1+D_2f(u_1,u_2)v_2
$$
for all $(u_1,u_2)\in U$, $v_1\in V_1$, $v_2\in V_2$.
\end{prop}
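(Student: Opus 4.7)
The plan is to prove both implications separately, with all the real content in (i)$\Rightarrow$(ii). For the easy direction (ii)$\Rightarrow$(i), I would observe that the linear inclusions $V_1 \hookrightarrow V_1 \times V_2$, $v_1 \mapsto (v_1,0)$, and the analogous inclusion of $V_2$ are continuous and hence $C^1_{MB}$-smooth with derivatives equal to themselves. The chain rule from \cite[Part I]{H} then yields that each $D_k f(u_1,u_2) v_k$ exists and equals $Df(u_1,u_2)(v_1,0)$, respectively $Df(u_1,u_2)(0,v_2)$. Joint continuity of $((u_1,u_2),v_k) \mapsto D_k f(u_1,u_2) v_k$ is then just the composition of the continuous map $(u,v)\mapsto Df(u)v$ with a continuous injection, and the linearity of $Df(u_1,u_2)$ on $V_1 \times V_2$ gives the displayed formula.

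For (i)$\Rightarrow$(ii), I would fix $(u_1,u_2)\in U$ and $(v_1,v_2)\in V_1\times V_2$, and pick $h_0>0$ so small that all points $(u_1+shv_1,u_2+thv_2)$ with $s,t\in[0,1]$ and $|h|\le h_0$ lie in $U$. The main step is the telescoping identity
$$
f((u_1,u_2)+h(v_1,v_2))-f(u_1,u_2)=\bigl[f(u_1+hv_1,u_2+hv_2)-f(u_1,u_2+hv_2)\bigr]+\bigl[f(u_1,u_2+hv_2)-f(u_1,u_2)\bigr].
$$
Since $D_1 f$ exists and is jointly continuous by (i), the curve $s\mapsto f(u_1+shv_1,u_2+hv_2)$ on $[0,1]$ is continuously differentiable with tangent vector $h\cdot D_1 f(u_1+shv_1,u_2+hv_2)v_1$, so the Riemann fundamental theorem from \cite[Part I]{H} rewrites the first bracket as $h\int_0^1 D_1 f(u_1+shv_1,u_2+hv_2)v_1\,ds$; symmetrically for the second bracket with $D_2 f$.

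The main obstacle will be passing to the limit $h\to 0$ under these integrals. Here I would invoke Proposition 1.2 applied to the jointly continuous map $(h,s)\mapsto D_k f(\ldots)v_k$ with parameter space $T=(-h_0,h_0)$, metric space $M=[0,1]$, and compact set $K=[0,1]$, evaluated at $t=0$: this yields uniform convergence in $s\in[0,1]$ of each integrand to its constant limit $D_k f(u_1,u_2)v_k$ as $h\to0$. Uniform convergence of continuous $F$-valued integrands on $[0,1]$ passes through the Riemann integral by the standard seminorm estimate from \cite[Part I]{H}, so dividing by $h\neq 0$ and letting $h\to 0$ produces both the existence of the full directional derivative and the formula
$$
Df(u_1,u_2)(v_1,v_2)=D_1 f(u_1,u_2)v_1+D_2 f(u_1,u_2)v_2.
$$
Continuity of $(u,v)\mapsto Df(u)v$ is then immediate from joint continuity of each $D_k f$ together with continuity of addition in $F$, while linearity of $Df(u)$ in $v$ is a standard consequence of the Michal--Bastiani framework recalled in \cite[Part I]{H}; hence $f$ is $C^1_{MB}$-smooth.
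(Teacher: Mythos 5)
Your proposal is correct, and it coincides with the standard argument: the paper itself gives no proof of this proposition but simply cites \cite[Part I]{H}, where the result is established exactly as you do it — telescoping $f(u+h v)-f(u)$ into two partial increments, rewriting each via the fundamental theorem of calculus for curves as $h\int_0^1 D_kf(\cdot)v_k$, and passing to the limit using uniform continuity on the compact parameter interval (your appeal to Proposition 1.2 is a legitimate substitute), with linearity of $Df(u)$ then coming from the general $C^1_{MB}$ theory. So there is nothing to fix; your route is essentially the same as the cited source's.
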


\begin{prop} 
Let $U\subset V$ be open. A map $f:U\to F$ is $C^1_F$-smooth if and only if it is  $C^1_{MB}$-smooth with $U\ni u\mapsto Df(u)\in L_c(V,F)$ $\beta$-continuous..
\end{prop}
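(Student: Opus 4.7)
The plan is to unfold both definitions and observe that the only non-trivial ingredient is the passage from $\beta$-continuity of $Df$ (plus continuity of each $Df(u)$) to joint continuity of $(u,v)\mapsto Df(u)v$. The reverse implication is essentially a tautology: a $C^1_{MB}$-map $f$ automatically has linear and continuous derivatives $Df(u)\in L_c(V,F)$ (as recalled at the beginning of Section 3), all directional derivatives exist, and $f$ is continuous; adjoining the $\beta$-continuity hypothesis on $Df$ gives exactly the conditions of Definition 1.1, so $f$ is $C^1_F$-smooth.

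For the forward implication, assume $f$ is $C^1_F$-smooth. I only need to verify that the evaluation
\[
U\times V\ni(u,v)\mapsto Df(u)v\in F
\]
is continuous, since everything else in the $C^1_{MB}$-definition is either assumed or automatic. Because $V$ and $F$ are Fr\'echet spaces (in particular metrizable), and $U\times V$ carries the product topology, sequential continuity suffices. I would fix $(u,v)\in U\times V$ and a convergent sequence $(u_n,v_n)\to(u,v)$, and split
\[
Df(u_n)v_n-Df(u)v=(Df(u_n)-Df(u))v_n+Df(u)(v_n-v).
\]
The second term tends to $0$ because $Df(u)\in L_c(V,F)$ is continuous and $v_n\to v$.

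For the first term I would use the preliminary fact that the points of a convergent sequence form a bounded set: the set $B=\{v_n:n\in\N\}\cup\{v\}\subset V$ is bounded. Given any neighbourhood $N$ of $0$ in $F$, $\beta$-continuity of $Df$ at $u$ provides a neighbourhood $U_u\subset U$ such that $(Df(u')-Df(u))B\subset N$ for all $u'\in U_u$; for all sufficiently large $n$, $u_n\in U_u$ and $v_n\in B$, so $(Df(u_n)-Df(u))v_n\in N$. Thus the first term also tends to $0$, and joint continuity is established.

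The one step that actually has content is identifying $B=\{v_n\}\cup\{v\}$ as a bounded set so that $\beta$-continuity gives uniform control on exactly the sequence of arguments we want to plug in; once this is seen the rest is bookkeeping. I anticipate no further obstacles.
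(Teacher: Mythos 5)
Your proposal is correct and follows essentially the same route as the paper: the nontrivial direction is handled by the identical decomposition $Df(u_n)v_n-Df(u)v=(Df(u_n)-Df(u))v_n+Df(u)(v_n-v)$, with the bounded set formed from the convergent sequence $v_n$ and $\beta$-continuity of $Df$ controlling the first term, continuity of $Df(u)$ the second. The paper likewise treats the converse as an unfolding of definitions, so there is nothing to add.
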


\begin{proof} We only show that $C^1_F$-smoothness implies $C^1_{MB}$-smoothness. Assume $f:V\supset U\to F$ is $C^1_F$-smooth. Consider sequences $U\ni u_j\to u\in U$ and $v_j\to v$ in $V$ and let a neighbourhood $N$ of $0$ in $F$ be given. We have to show $Df(u_j)v_j\to Df(u)v$ as $j\to\infty$. The set $B=\{v_j:j\in\N\}$ is bounded. For every $j\in\N$,
$$
Df(u_j)v_j-Df(u)v=[Df(u_j)-Df(u)]v_j+Df(u)[v_j-v].
$$
By continuity of $Df(u)$ the last term tends to $0$ as $j\to\infty$. Now consider the points $[Df(u_j)-Df(u)]v_j\in F$, $j\in\N$. By the $\beta$-continuity of $Df:U\to L_c(V,F)$, $Df(u_j)\in Df(u)+U_{N,B}$ for $j$ sufficiently large. For these $j$,
$$
[Df(u_j)-Df(u)]v_j\in[Df(u_j)-Df(u)]B\subset N.
$$
This yields $[Df(u_j)-Df(u)]v_j\to0$ as $j\to\infty$.
\end{proof}

\begin{prop}
In case $E$ is a finite-dimensional normed space each $C^1_{MB}$-map $f:E\supset U\to F$ is $C^1_F$-smooth.
\end{prop}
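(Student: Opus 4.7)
The plan is to invoke Proposition 3.2, which reduces the claim to showing that whenever $f:E\supset U\to F$ is $C^1_{MB}$-smooth with $E$ finite-dimensional, the derivative map $Df:U\to L_c(E,F)$ is $\beta$-continuous. So fix $u\in U$, a bounded set $B\subset E$, and a neighbourhood $N$ of $0$ in $F$; I need to produce a neighbourhood $U_u$ of $u$ in $U$ with $(Df(s)-Df(u))B\subset N$ for every $s\in U_u$.

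The crucial input from finite-dimensionality of $E$ is that the closed ball in any norm is compact, so the bounded set $B$ has compact closure $K=\overline{B}\subset E$. Since $f$ is $C^1_{MB}$-smooth, the evaluation map
\[
g:U\times E\ni(s,v)\mapsto Df(s)v\in F
\]
is continuous. I now apply Proposition 1.1 to $g$ at the point $u\in U$ with the compact set $K\subset E$ (taking $M=E$ with any norm as metric). This yields a neighbourhood $T_N$ of $u$ in $U$ and $\varepsilon>0$ such that, for all $t',\hat t\in T_N$, all $k\in K$, and all $m\in E$ with $\|m-k\|<\varepsilon$, one has $g(t',k)-g(\hat t,m)\in N$.

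Specializing to $t'=s\in T_N$, $\hat t=u$, and $k=m=v\in K$ (so the distance is $0<\varepsilon$) gives
\[
Df(s)v-Df(u)v=g(s,v)-g(u,v)\in N\qquad\text{for all }s\in T_N,\ v\in B\subset K.
\]
Hence $Df(s)-Df(u)\in U_{N,B}$ for every $s\in T_N$, which is exactly $\beta$-continuity of $Df$ at $u$. Combined with Proposition 3.2 this gives $C^1_F$-smoothness of $f$.

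I do not foresee a genuine obstacle: the only nontrivial ingredient is the compactness of bounded subsets of $E$, which is precisely what finite-dimensionality buys, and it lets the continuous-on-$U\times E$ map $(s,v)\mapsto Df(s)v$ be converted into uniform control on a bounded set via Proposition 1.1. If for some reason Proposition 1.1 were inconvenient to apply as stated (for instance if the metric structure on $E$ is to be avoided), an equivalent route is to cover $K$ by finitely many $v$-neighbourhoods on which $(s,v)\mapsto Df(s)v-Df(u)v$ is close to $0$ by joint continuity, and then intersect the corresponding $s$-neighbourhoods—this is in fact what the proof of Proposition 1.1 does.
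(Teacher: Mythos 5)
Your proposal is correct and takes essentially the same route as the paper: reduce to $\beta$-continuity of $Df$ (via Proposition 3.2, the paper cites Remark 2.1(iii)), use that $\overline{B}$ is compact since $\dim E<\infty$, and apply the uniform continuity result for the continuous map $(s,v)\mapsto Df(s)v$ on $\{u\}\times\overline{B}$ to get $(Df(s)-Df(u))B\subset N$ for $s$ near $u$. The only slip is the label: the uniform continuity statement you invoke is Proposition 1.2 in the paper's numbering, not 1.1, but you state its content correctly, so the argument stands as written.
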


\begin{proof}
Recall Remark 2.1 (iii). Let $B\subset E$ be bounded and let $N$ be a neighbourhood of $0$ in $F$. Because of $\dim\,E<\infty$ the closure
$\overline{B}$ is compact. The map $U\times E\ni(y,x)\mapsto Df(y)x\in F$ is continuous. Apply Proposition 1.2 to the compact set $\{u\}\times\overline{B}\subset U\times E$. This yields a neighbourhood $N_u$ of $u$ in $U$ with
$$
(Df(v)-Df(u))b\in N\quad\text{for all}\quad v\in N_u,b\in\overline{B},
$$
hence $Df(v)\in Df(u)+U_{N,B}$ for all $v\in N_u$.
\end{proof}

\begin{prop}
For Banach spaces $V$ and $F$ and $U\subset V$ open a map $f:V\supset U\to F$ is $C^1_F$-smooth if and only if there exists a continuous map $D_f:U\to L_c(V,F)$  such that for every $u\in U$ and\\
(F) $\quad$ for every $\epsilon>0$ there exists $\delta>0$ with
$$
|f(v)-f(u)-D_f(u)(v-u)|\le\epsilon|v-u|\quad\text{for all}\quad v\in U\quad\text{with}\quad |v-u|<\delta.
$$ 
In this case, $D_f(u)v$ is the directional derivative $Df(u)v$, for every $u\in U,v\in V$. 

\end{prop}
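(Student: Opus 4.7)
The plan is to establish the two implications separately, using the integral representation (3.1) for the forward direction and a direct difference-quotient computation along rays for the converse, exploiting throughout that by Remark 2.1 (ii) the $\beta$-topology on $L_c(V,F)$ coincides with the operator norm topology in the Banach setting.

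For the forward direction, I take $D_f:=Df$ as the candidate. Proposition 3.2 gives that $f$ is $C^1_{MB}$-smooth, so (3.1) applies and yields
$$
f(v)-f(u)-Df(u)(v-u)=\int_0^1\bigl(Df(u+t(v-u))-Df(u)\bigr)(v-u)\,dt
$$
whenever the segment $[u,v]$ lies in $U$. Given $\epsilon>0$, norm continuity of $Df$ at $u$ (which is exactly $\beta$-continuity in the Banach case) provides a $\delta>0$, small enough that the open ball of radius $\delta$ about $u$ lies in $U$, such that $\|Df(w)-Df(u)\|\le\epsilon$ on that ball. For $|v-u|<\delta$ the integrand has norm at most $\epsilon\,|v-u|$ uniformly in $t\in[0,1]$, and the standard norm bound for the Banach-valued Riemann integral gives (F).

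For the converse, given a continuous $D_f:U\to L_c(V,F)$ with property (F), I first verify that the directional derivative $Df(u)v$ exists and equals $D_f(u)v$ for every $u\in U$ and $v\in V$. For $v=0$ the claim is trivial; otherwise I apply (F) with increment $tv$: given $\epsilon>0$, for real $t$ with $0<|t|<\delta/|v|$ one has
$$
\bigl|f(u+tv)-f(u)-tD_f(u)v\bigr|\le\epsilon\,|t|\,|v|,
$$
and dividing by $|t|$ shows the limit exists and equals $D_f(u)v$. Hence $Df(u)=D_f(u)\in L_c(V,F)$ is linear and continuous, as required. Continuity of $f$ itself is an immediate by-product of (F), since locally $|f(v)-f(u)|\le(\|D_f(u)\|+\epsilon)|v-u|$. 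Because $Df=D_f$ is by hypothesis continuous into $L_c(V,F)$ with its norm topology, which coincides with $\beta$, the map $f$ is $C^1_F$-smooth.

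No step is conceptually difficult; the only point that requires care is the norm bound on the Banach-valued integral, which is a routine property of the Riemann integral of continuous maps into a Banach space. Everything else reduces to recognizing that in the Banach setting (F) is exactly the familiar Fr\'echet-difference-quotient estimate and that $\beta$-continuity collapses to norm continuity.
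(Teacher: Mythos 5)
Your proof is correct and, for the implication the paper actually proves ($C^1_F$-smoothness implies (F), via Proposition 3.2, the integral representation (3.1), and norm continuity of $Df$ at $u$, using that $\beta$ coincides with the operator norm topology), it follows essentially the same route as the paper. The converse, which the paper explicitly omits (``We only show\dots''), you supply correctly with the standard ray/difference-quotient argument showing $Df(u)v=D_f(u)v$, plus the observation that continuity of $f$ and $\beta$-continuity of $Df=D_f$ follow immediately.
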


\begin{proof}
We only show that for a $C^1_F$-map $f:V\supset U\to F$ and $u\in U$ the map $D_f(u)=Df(u)\in L_c(V,F)$ satisfies statement (F).  Due to Proposition 3.2 we may use the integral representation (3.1) for $C^1_{MB}$-maps. For $v$ in a convex neighbourhood $N\subset U$ of $u$ this yields
$$
|f(v)-f(u)-Df(u)(v-u)|=\left|\int_0^1Df(u+s(v-u))[v-u]ds-\int_0^1Df(u)[v-u]ds\right|
$$
$$
=\left|\int_0^1[ Df(u+s(v-u))-Df(u)][v-u]du\right|\le\int_0^1|\ldots|ds
$$
$$
\le\max_{0\le s\le1}|Df(u+s(v-u))-Df(u)||v-u|.
$$
Now the continuity of $Df$ at $u$ completes the proof.
\end{proof}

Continuous linear maps $T:V\to F$ are $C^1_F$-smooth because they are $C^1_{MB}$-smooth with constant derivative, $DT(u)=T$ for all $u\in V$. Using Proposition 3.2 and continuity of addition and multiplication on $L_c(V,F)$ (with the topology $\beta$) one obtains from the properties of $C^1_{MB}$-maps that linear combinations of $C^1_F$-maps are $C^1_F$-maps, that also for $C^1_F$-maps differentiation is linear, and the integral
formula (3.1) holds.  If $f_1:V\supset U\to F_1$ and $f_2:V\supset U\to F_2$ are $C^1_F$-smooth then also $f_1\times f_2:V\supset U\ni u\mapsto (f_1(u),f_2(u))\in F_1\times F_2$ is $C^1_F$-smooth, with
$$
D(f_1\times f_2)(u)v=(Df_1(u)v,Df_2(u)v).
$$
This follows easily from the analogous property for $C^1_{MB}$-maps, by means of the formula for the directional derivatives of $f_1\times f_2$ and considering neighbourhoods of $0$ in $F_1\times F_2$ which are products of neighbourhoods of $0$ in $F_j$, $j\in\{1,2\}$. 

\begin{prop} 
(Chain rule). If $f:V\supset U\to F$ and $g:F\supset W\to G$ are $C^1_F$-maps, with $f(U)\subset W$, then also
$g\circ f$ is a $C^1_F$-map.
\end{prop}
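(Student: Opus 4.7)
The plan is to leverage the $C^1_{MB}$-chain rule from \cite[Part I]{H}, which gives $C^1_{MB}$-smoothness of $g\circ f$ together with the standard formula
$$
D(g\circ f)(u)=Dg(f(u))\circ Df(u).
$$
By Proposition 3.2 it then remains to show that $U\ni u\mapsto Dg(f(u))\circ Df(u)\in L_c(V,G)$ is $\beta$-continuous. Since $V$ is metrizable, $U$ is first countable, so Remark 2.1 (iii) reduces the task to sequential $\beta$-continuity at each $u_0\in U$.

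Fix $u_0\in U$, a sequence $u_j\to u_0$ in $U$, a bounded set $B\subset V$, and a neighbourhood $N$ of $0$ in $G$. Pick neighbourhoods $N_1,N_2$ of $0$ in $G$ with $N_1+N_2\subset N$, and further neighbourhoods $N_{1a},N_{1b}$ with $N_{1a}+N_{1b}\subset N_1$. I write
$$
Dg(f(u_j))Df(u_j)-Dg(f(u_0))Df(u_0)=T_1(j)+T_2(j)+T_3(j),
$$
where $T_1(j)=[Dg(f(u_j))-Dg(f(u_0))]\circ Df(u_0)$, $T_2(j)=Dg(f(u_0))\circ[Df(u_j)-Df(u_0)]$, and $T_3(j)=[Dg(f(u_j))-Dg(f(u_0))]\circ[Df(u_j)-Df(u_0)]$. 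For $T_1$, the set $Df(u_0)(B)$ is a fixed bounded subset of $F$; combining this with continuity of $f$ and $\beta$-continuity of $Dg$ at $f(u_0)$ gives $T_1(j)(B)\subset N_2$ for $j$ sufficiently large. For $T_2$, continuity of the linear map $Dg(f(u_0))\in L_c(F,G)$ produces a neighbourhood $M$ of $0$ in $F$ with $Dg(f(u_0))(M)\subset N_{1a}$, and $\beta$-continuity of $Df$ at $u_0$ then places $(Df(u_j)-Df(u_0))(B)\subset M$ for $j$ sufficiently large, so $T_2(j)(B)\subset N_{1a}$.

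The main obstacle is the cross term $T_3(j)$, in which both factors depend on $j$. The key step I will carry out is to show that
$$
\tilde B=\bigcup_{j\in\N}(Df(u_j)-Df(u_0))(B)
$$
is bounded in $F$. Given any balanced neighbourhood $Q$ of $0$ in $F$, $\beta$-continuity of $Df$ produces $J_0$ with $(Df(u_j)-Df(u_0))(B)\subset Q$ for all $j\ge J_0$, while for each $j<J_0$ the image $(Df(u_j)-Df(u_0))(B)$ is bounded because $Df(u_j)-Df(u_0)\in L_c(V,F)$ maps bounded sets to bounded sets; the finite union of these bounded sets is bounded, hence contained in $rQ$ for some $r\ge 1$, and together $\tilde B\subset rQ$. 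Once $\tilde B$ is bounded, $\beta$-continuity of $Dg$ at $f(u_0)$ yields a neighbourhood $W_0$ of $f(u_0)$ in $W$ with $(Dg(w)-Dg(f(u_0)))(\tilde B)\subset N_{1b}$ for all $w\in W_0$, and continuity of $f$ places $f(u_j)\in W_0$ for $j$ large, whence $T_3(j)(B)\subset N_{1b}$. Summing the three estimates, $(D(g\circ f)(u_j)-D(g\circ f)(u_0))(B)\subset N_{1a}+N_{1b}+N_2\subset N$ for $j$ large, which is the required sequential $\beta$-continuity.
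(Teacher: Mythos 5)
Your proof is correct, and while it shares the paper's scaffolding (the $C^1_{MB}$ chain rule plus Proposition 3.2, then sequential $\beta$-continuity via Remark 2.1 (iii)), the technical core is genuinely different. The paper splits the difference into only two terms, $[Dg(f(u_j))-Dg(f(u))]\circ Df(u_j)$ and $Dg(f(u))\circ[Df(u_j)-Df(u)]$, and then tames the first one by writing $Df(u_j)B\subset Df(u)B+N_3$ and controlling $[Dg(f(u_j))-Dg(f(u))]N_3$ through the \emph{joint} continuity of $W\times F\ni(w,h)\mapsto Dg(w)h\in G$ at $(f(u),0)$ (i.e. through the $C^1_{MB}$ property of $g$ once more). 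You instead use the three-term decomposition $T_1+T_2+T_3$ and isolate the cross term $T_3(j)=[Dg(f(u_j))-Dg(f(u_0))]\circ[Df(u_j)-Df(u_0)]$, which you kill by the observation that $\tilde B=\bigcup_j(Df(u_j)-Df(u_0))(B)$ is bounded in $F$ (tail inside a balanced neighbourhood by $\beta$-convergence, finitely many heads bounded since continuous linear maps preserve boundedness, finite unions of bounded sets bounded), after which a single application of $\beta$-continuity of $Dg$ at $f(u_0)$ on the bounded set $\tilde B$ suffices. Your union-boundedness lemma is a clean substitute for the paper's use of joint continuity of $(w,h)\mapsto Dg(w)h$: it keeps the whole argument at the level of $\beta$-continuity and elementary facts about bounded sets, at the cost of the extra cross term and the auxiliary boundedness argument; the paper's route avoids that lemma but needs one more neighbourhood-juggling step ($N_1+N_1+N_1+N_1\subset N$) and the continuity of the two-variable derivative map. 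Both arguments use exactly the same hypotheses, so the difference is one of bookkeeping rather than strength.
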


\begin{proof} 1. The chain rule for $C^1_{MB}$-maps yields that $g\circ f$ is $C^1_{MB}$-smooth with $D(g\circ f)(u)=D(g(f(u)))\circ Df(u)$ for all $u\in U$.   So it remains to prove that the map $U\ni u\mapsto Dg(f(u))\circ Df(u)\in L_c(V,G)$ is $\beta$-continuous. As $V$ has countable neighbourhood bases it is enough to show that, given a sequence $U\ni u_j\to u\in U$, a bounded set $B\subset V$, and a neighbourhood $N$ of $0$ in $G$,  we have
$$
[Dg(f(u_j))\circ Df(u_j)-Dg(f(u))\circ Df(u)]B\subset N\quad\text{for}\quad j\in\N\quad\text{sufficiently large.}
$$
So let a sequence $U\ni u_j\to u\in U$, a bounded set $B\subset V$, and a neighbourhood $N$ of $0$ in $G$ be given.

\medskip

2. There is a neighbourhood $N_1$ of $0$ in $G$ with $N_1+N_1+N_1+N_1\subset N$, see \cite[proof of Theorem 1.10]{R}.
By linearity, for every $j\in\N$,
\begin{eqnarray*}
Dg(f(u_j))\circ Df(u_j)-Dg(f(u))\circ Df(u) & = & [Dg(f(u_j))-Dg(f(u))]\circ Df(u_j))\\
& & +Dg(f(u))\circ[Df(u_j)-Df(u)].
\end{eqnarray*}
3. Consider the last term. By continuity of $Dg(f(u))$ at $0\in F$, there is a neighbourhood $N_2$ of $0$ in $F$ with
$Dg(f(u))N_2\subset N_1$. By $\beta$-continuity of $Df$ at $u\in U$, there is an integer $j_s$ such that for all integers $j\ge j_2$,
$$
[Df(u_j)-Df(u)]B\subset N_2.
$$
Hence, for all integers $j\ge j_2$,
$$
Dg(f(u))\circ[Df(u_j)-Df(u)]B\subset N_1.
$$
4. $Df(u)B$ is bounded. Using $\beta$-continuity of $Dg$ at $f(u)$ and $\lim_{j\to\infty}f(u_j)=f(u)$ we find an integer $j_3\ge j_2$ such that for all integers $j\ge j_3$
we have 
$$
[Dg(f(u_j))-Dg(f(u))]Df(u)B\subset N_1.
$$
5. Now we use the continuity of $W\times F\ni(w,h)\mapsto Dg(w)h\in G$ at $(f(u),0)$. We find a neighbourhood $N_3$ of $0$ in $F$ and an integer $j_4\ge j_3$ such that for all integers $j\ge j_4$ we have $Dg(f(u_j))N_3\subset N_1$ and 
$-Dg(f(u))N_3\subset N_1$. This yields
$$
[Dg(f(u_j))-Dg(f(u))]N_3\subset N_1+N_1\quad\text{for all integers}\quad j\ge j_4.
$$
6. The $\beta$-continuity of $Df$ at $u\in U$ yields an integer $j_N\ge j_4$ such that for all integers $j\ge j_N$ we have
$$
[Df(u_j)-Df(u)]B\subset N_3,
$$
hence
$$
Df(u_j)B\subset Df(u)B+N_3.
$$
7. For integers $j\ge j_N$ we obtain
$$
[Dg(f(u_j))\circ Df(u_j)-Dg(f(u))\circ Df(u)]B
$$
$$
=[Dg(f(u_j))-Dg(f(u))]\circ Df(u_j))+Dg(f(u))\circ[Df(u_j)-Df(u)]B\quad\text{(see part 2)}
$$
$$
\subset [Dg(f(u_j))-Dg(f(u))] Df(u_j))B+[Dg(f(u))\circ[Df(u_j)-Df(u)]B
$$
$$
\subset[Dg(f(u_j))-Dg(f(u))](Df(u)B+N_3)+N_1\quad\text{(see parts 6 and 3)}
$$
$$
\subset[Dg(f(u_j))-Dg(f(u))]Df(u)B+[Dg(f(u_j))-Dg(f(u))]N_3+N_1
$$
$$
\subset N_1+(N_1+N_1)+N_1\quad\text{(see parts 4 and 5)}
$$
$$
\subset N.
$$
\end{proof}

\begin{prop} For a continuous map $f:V_1\times V_2\supset U\to F$, $U$ open, the following statements are equivalent.\\
(i) For all $(u_1,u_2)\in U$ and all $v_k\in V_k$, $k\in\{1,2\}$, $f$ has a partial derivative $D_kf(u_1,u_2)v_k\in F$, all maps
$$
D_kf(u_1,u_2):V_k\to F,\quad(u_1,u_2)\in U,\quad k\in\{1,2\},
$$
are linear and continuous, and the maps
$$
U\ni(u_1,u_2)\mapsto D_kf(u_1,u_2)\in L_c(V_k,F),\quad k\in\{1,2\},
$$ 
are $\beta$-continuous.\\
(ii) $f$ is $C^1_F$-smooth.\\
In this case,
$$
Df(u_1,u_2)(v_1,v_2)=D_1f(u_1,u_2)v_1+D_2f(u_1,u_2)v_2
$$
for all $(u_1,u_2)\in U$, $v_1\in V_1$, $v_2\in V_2$.
\end{prop}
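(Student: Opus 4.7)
My plan is to model the proof on Proposition 3.1 for $C^1_{MB}$-maps, using Proposition 3.2 to bridge between $C^1_F$- and $C^1_{MB}$-smoothness and to lift the continuity statement about $(u,v_k)\mapsto D_kf(u)v_k$ to $\beta$-continuity of $D_kf:U\to L_c(V_k,F)$. Both directions will rest on the fact that continuous linear maps between topological vector spaces preserve boundedness, applied once to the canonical injections $\iota_k:V_k\to V_1\times V_2$ and once to the projections $\pi_k:V_1\times V_2\to V_k$.

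For the direction (ii) $\Rightarrow$ (i), I would use Proposition 3.2 to get that $f$ is $C^1_{MB}$-smooth with $\beta$-continuous $Df:U\to L_c(V_1\times V_2,F)$, then invoke Proposition 3.1 to obtain the decomposition formula and the linearity/continuity of $D_kf(u_1,u_2):V_k\to F$. To show that $D_kf:U\to L_c(V_k,F)$ is $\beta$-continuous, I would observe that $D_kf(u)=Df(u)\circ\iota_k$; given a bounded set $B_k\subset V_k$ and a neighbourhood $N$ of $0$ in $F$, the image $\iota_k(B_k)\subset V_1\times V_2$ is bounded, and the $\beta$-continuity of $Df$ at $u$ supplies a neighbourhood of $u$ on which $(Df(\cdot)-Df(u))\iota_k(B_k)\subset N$, which is exactly $\beta$-continuity of $D_kf$.

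For (i) $\Rightarrow$ (ii), I would first check the hypothesis of Proposition 3.1 in order to obtain $C^1_{MB}$-smoothness together with the decomposition formula. Joint continuity of $(u,v_k)\mapsto D_kf(u)v_k$ follows from the sequential criterion: for $u_j\to u$ in $U$ and $v_j\to v$ in $V_k$, split
$$
D_kf(u_j)v_j-D_kf(u)v=(D_kf(u_j)-D_kf(u))v_j+D_kf(u)(v_j-v);
$$
the second summand tends to $0$ by continuity of $D_kf(u)$, and the first tends to $0$ because the set $\{v_j:j\in\N\}\cup\{v\}\subset V_k$ is bounded (being the trace of a convergent sequence together with its limit) and $D_kf$ is $\beta$-continuous at $u$.

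Finally, I would upgrade $C^1_{MB}$- to $C^1_F$-smoothness by verifying $\beta$-continuity of $Df:U\to L_c(V_1\times V_2,F)$ via Proposition 3.2. Given a bounded $B\subset V_1\times V_2$ and a neighbourhood $N$ of $0$ in $F$, choose a neighbourhood $N_1$ of $0$ with $N_1+N_1\subset N$; since the projections $\pi_k$ are continuous linear, the sets $B_k=\pi_k(B)$ are bounded in $V_k$, so the $\beta$-continuity of each $D_kf$ at $u$ yields a common neighbourhood of $u$ on which $(D_kf(\cdot)-D_kf(u))B_k\subset N_1$ for $k=1,2$. Using the decomposition formula one concludes $(Df(\cdot)-Df(u))B\subset N_1+N_1\subset N$. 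I do not anticipate a serious obstacle here; the only care required is to shuffle bounded sets through the injections and projections and to replace each scalar continuity step of the $C^1_{MB}$-argument by a $\beta$-continuity step on a specified bounded set.
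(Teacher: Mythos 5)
Your proposal is correct and follows essentially the same route as the paper: both directions go through Propositions 3.1 and 3.2, use the boundedness of $\iota_k(B_k)$ (the paper's $B_1\times\{0\}$) for (ii)$\Rightarrow$(i), the same splitting $(D_kf(u_j)-D_kf(u))v_j+D_kf(u)(v_j-v)$ with the bounded set $\{v_j\}\cup\{v\}$ for joint continuity, and the decomposition $Df(u)=D_1f(u)\circ pr_1+D_2f(u)\circ pr_2$ with bounded projections $pr_k(B)$ for the $\beta$-continuity of $Df$. The only cosmetic difference is that the paper verifies $\beta$-continuity of each summand $D_kf(\cdot)\circ pr_k$ and invokes continuity of addition in $(L_c,\beta)$, while you combine the two estimates directly via $N_1+N_1\subset N$; these are the same argument.
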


\begin{proof} 1. Suppose (ii) holds. Then $f$ is $C^1_{MB}$-smooth, and all statements in (i) up to the last one follow from Proposition 3.1 on partial derivatives. In order to deduce the last statement in (i) for $k=1$ let a sequence $((u_{1j},u_{2j}))_{j=1}^{\infty}$ in $U$ be given which converges to some $(u_1,u_2)\in U$. Let a neighbourhood $N$ of $0 $ in $F$ and a bounded set $B_1\subset V_1$ be given, consider the neighbourhood $U_{N,B_1}$ of $0$ in $L_c(V_1,F)$. As $V_1\ni v\mapsto(v,0)\in V_1\times V_2$ is linear and continuous, $B_1\times\{0\}$
is a bounded subset of $V_1\times V_2$. As $f$ is $C^1_F$-smooth the map $Df$ is $\beta$-continuous, and for $j$ sufficiently large we get
$(Df(u_{1j},u_{2j})-Df(u_1,u_2))(B_1\times\{0\})\subset N$ which yields
$(D_1f(u_{1j},u_{2j})-D_1f(u_1,u_2))B_1\subset N$.
For $k=2$ the proof is analogous.

\medskip

2. Suppose (i) holds.

\medskip

2.1. Claim: Both maps $U\times V_k\ni(u_1,u_2,v_k)\mapsto D_kf(u_1,u_2)v_k\in F$, $k\in\{1,2\}$, are continuous.

\medskip

Proof for $k=1$: Let a sequence $(u_{1j},u_{2j},v_{1j})_1^{\infty}$ in $U\times V_1$ be given which converges to some $(u_1,u_2,v_1)\in U\times V_1$. Then $v_{1j}\to v_1$ in $V_1$, and $B_1=\{v_{1j}:j\in\N\}\cup\{v_1\}$ is a bounded subset of $V_1$. Let a neighbourhood $N$ of $0$ in $F$ be given. By the $\beta$-continuity of $D_1f$,
$$
(D_1f(u_{1j},u_{2j})-D_1f(u_1,u_2))B_1\subset N\quad\text{for}\quad j\quad\text{sufficiently large}.
$$
For each $j\in N$ we have
\begin{eqnarray*}
D_1f(u_{1j},u_{2j})v_{1j}-D_1f(u_1,u_2)v_1 & = & (D_1f(u_{1j},u_{2j})-D_1f(u_1,u_2))v_{1j}\\
& & + D_1f(u_1,u_2))(v_{1j}-v_1).
\end{eqnarray*}
Now it becomes obvious how to complete the proof, using the last equation, the statement right before it, and continuity of 
$D_1f(u_1,u_2)$.

\medskip

2.2. Proposition 3.1 on partial derivatives applies and yields that $f$ is $C^1_{MB}$-smooth, with
$$
Df(u_1,u_2)(v_1,v_2)=D_1f(u_1,u_2)v_1+D_2f(u_1,u_2)v_2
$$
for all $(u_1,u_2)\in U$, $v_1\in V_1$, $v_2\in V_2$. According to Proposition 3.2 it remains to prove that the map $Df:U\to L_c(V_1\times V_2,F)$
is $\beta$-continuous. The projections $pr_k$ of $V_1\times V_2$
onto the factor $V_k$, for $k\in\{1,2\}$, are linear and continuous. For every $(u_1,u_2)\in U$ we have
$$
Df(u_1,u_2)=D_1f(u_1,u_2)\circ pr_1+D_2f(u_1,u_2)\circ pr_2,
$$
so it is sufficient to show that both maps
$$
V_1\times V_2\supset U\ni(u_1,u_2)\mapsto D_kf(u_1,u_2)\circ pr_k\in L_c(V_1\times V_2,F),\quad k\in\{1,2\},
$$
are $\beta$-continuous. We deduce this for $k=1$. Let a sequence $(u_{1j},u_{2j})_1^{\infty}$ in $U$ be given which converges to some $(u_1,u_2)\in U$, as well as a bounded subset $B\subset V_1\times V_2$ and a neighbourhood $N$ of $0$ in $F$. We need to show 
$$
(D_1f(u_{1j},u_{2j})\circ pr_1-D_1f(u_1,u_2)\circ pr_1)B\subset N
$$
for $j\in\N$ sufficiently large. $B_1=pr_1B$ is a bounded subset of $V_1$, and for every $j\in\N$ we have
$$
(D_1f(u_{1j},u_{2j})\circ pr_1-D_1f(u_1,u_2)\circ pr_1)B\subset (D_1f(u_{1j},u_{2j})-D_1f(u_1,u_2))B_1.
$$
The $\beta$-continuity of the map $D_1f$ yields that the last set is contained in $N$ for $j$ sufficiently large.
\end{proof}

\section{$C^1_F$-submanifolds}

\medskip

$C^1_F$-submanifolds of a Fr\'echet space are defined in the same way as continuously differentiable submanifolds of a Banach space. Below we collect the simple facts which are used in Section 7 and in Parts II and III.

\medskip

A $C^1_F$-diffeomorphism is an injective $C^1_F$-map from an open subset $U$ of a Fr\'echet space $F$ onto an open subset $W$ of a Fr\'echet space $V$ whose inverse defined on $W\subset V$ is a $C^1_F$-map. 

\medskip

Let $F=G\oplus H$ be a direct sum decomposition of a Fr\'echet space $F$ into closed subspaces. A subset $M\subset F$ is a $C^1_F$-submanifold of $F$ (modelled over the Fr\'echet space $G$) if for every point $m\in M$ there are an open neighbourhood $U$ in $F$ and a $C^1_F$-diffeomorphism $K:U\to F$ onto $W=K(U)$ with
$$
K(M\cap U)=W\cap G.
$$

\medskip

The tangent cones of the $C^1_F$-submanifold $M$ are closed subspaces of $F$. For $K$ as before the map $(DK(m))^{-1}$ defines a topological isomorphism from $G$ onto $T_mM$, and $K^{-1}$ defines an injective map $P$ from the open neighbourhood $V\cap G$ of $K(m)$ in $G$ onto the open neighbourhood $U\cap M$ of $m$ in $M$.   

\medskip

Open subsets of $C^1_F$-submanifolds are $C^1_F$-submanifolds.

\medskip

A $C^1_F$-map $h:M\to H$, $M$ a $C^1_F$-submanifold of $F$  and $H$ a Fr\'echet space, is defined by the property that for all local parametrizations $P$ as above the composition $f\circ P$ is a $C^1_F$-map.

\medskip

For $h$ as before and $m\in M$ the derivative $T_mh:T_mM\to H$ is defined by $T_mh(t)=(h\circ c)'(0)$,
for any continuously differentiable curve $c:I\to F$ with $c(0)=m$, $c(I)\subset M$, $c'(0)=t$. The map $T_mh$ is
linear and continuous.

\medskip

In case $h(M)$ is contained in a $C^1_F$-submanifold $M_H$ of $H$ and $z:M_H\to 
Z$ is $C^1_F$-smooth the chain rule holds, with $T_mh(T_mM)\subset T_{h(m)}M_H$ and $T_m(z\circ h)t=T_{h(m)}zT_mh(t)$.


\medskip

The restriction of a $C^1_F$-map on an open subset of $F$ to a  $C^1_F$-submanifold $M$ of $F$, with range in a Fr\'echet space $H$,  is a $C^1_F$-map from $M$ into the target space. 

\medskip

\section{Uniform contractions}

\medskip

The proof of Theorem 5.2 below employs twice the following basic uniform contraction principle.

\begin{prop}
(See for example \cite[Appendix VI, Proposition 1.2]{DvGVLW}.) Let a Hausdorff space $T$, a complete metric space $M$, and a map  $f:T\times M\to M$ be given. Assume that $f$ is a uniform contraction in the sense that there exists $k\in[0,1)$ so that 
$$
d(f(t,x),f(t,y))\le k\,d(x,y)
$$
for all $t\in T,x\in M,y\in M$, and $f(\cdot,x):T\to M$ is continuous for each $x\in M$. Then the map $g:T\to M$ given by $g(t)=f(t,g(t))$ is continuous.
\end{prop}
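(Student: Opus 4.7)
The plan is to build $g$ pointwise from the Banach fixed point theorem and then obtain continuity from the contraction estimate. First, for each fixed $t\in T$ the map $f(t,\cdot):M\to M$ is a $k$-contraction on the complete metric space $M$, hence by the Banach fixed point theorem it has a unique fixed point, which we call $g(t)$. This defines the map $g:T\to M$ with $g(t)=f(t,g(t))$ unambiguously.

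Next I would establish the Lipschitz-type estimate that drives continuity. Fix an arbitrary $t_0\in T$. For any $t\in T$, apply the triangle inequality and split through the auxiliary point $f(t,g(t_0))$:
\begin{align*}
d(g(t),g(t_0)) &= d(f(t,g(t)),f(t_0,g(t_0)))\\
&\le d(f(t,g(t)),f(t,g(t_0)))+d(f(t,g(t_0)),f(t_0,g(t_0)))\\
&\le k\,d(g(t),g(t_0))+d(f(t,g(t_0)),f(t_0,g(t_0))).
\end{align*}
Rearranging and using $0\le k<1$ yields
\[
d(g(t),g(t_0))\le\frac{1}{1-k}\,d(f(t,g(t_0)),f(t_0,g(t_0))).
\]

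To finish, I would use the hypothesis that $f(\cdot,g(t_0)):T\to M$ is continuous at $t_0$. Since $T$ is only assumed Hausdorff, I argue via neighbourhoods rather than sequences: given $\varepsilon>0$, continuity of $f(\cdot,g(t_0))$ at $t_0$ gives a neighbourhood $U$ of $t_0$ in $T$ such that $d(f(t,g(t_0)),f(t_0,g(t_0)))<(1-k)\varepsilon$ for all $t\in U$. Combined with the estimate above this gives $d(g(t),g(t_0))<\varepsilon$ for all $t\in U$, proving continuity of $g$ at $t_0$; since $t_0$ was arbitrary, $g$ is continuous on $T$.

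There is no real obstacle; the only mild point to be careful about is the topological nature of $T$ (merely Hausdorff, no metric available), which forces one to phrase the final step in terms of neighbourhoods instead of sequential convergence. The Hausdorff hypothesis itself is not used in the argument, but it is harmless and is what the cited reference states.
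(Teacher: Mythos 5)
Your proof is correct: pointwise existence of $g(t)$ via the Banach fixed point theorem, the estimate $d(g(t),g(t_0))\le\frac{1}{1-k}\,d(f(t,g(t_0)),f(t_0,g(t_0)))$, and continuity of $f(\cdot,g(t_0))$ at $t_0$ give exactly what is claimed, and your neighbourhood formulation is the right one since $T$ carries no metric. The paper itself gives no proof but only cites \cite[Appendix VI, Proposition 1.2]{DvGVLW}, and your argument is precisely the standard one behind that reference, so there is nothing to add.
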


\begin{thm}
Let a Fr\'echet space $T$, a Banach space $B$, open sets $V\subset F$ and $O_B\subset B$, and a $C^1_F$-map $A:V\times O_B\to B$ be given. Assume that for a closed set $M\subset O_B$ we have $A(V\times M)\subset M$, and $A$ is a uniform contraction  in the sense that there exists $k\in[0,1)$ so that 
$$
|A(t,x)-A(t,y)|\le k|x-y|
$$
for all $t\in V,x\in O_B,y\in O_B$. Then the map $g:V\to B$ given by $g(t)=A(t,g(t))\in M$ is $C^1_F$-smooth.
\end{thm}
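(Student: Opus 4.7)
First I would obtain existence and continuity of $g$. Since $M$ is closed in the Banach space $B$ it is a complete metric space, $A$ is $C^1_F$ hence continuous, and the hypothesis $A(V\times M)\subset M$ together with the uniform contraction estimate lets me apply Proposition~5.1 to the restriction of $A$ to $V\times M$. This produces a continuous map $g:V\to M\subset B$ with $g(t)=A(t,g(t))$.

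Next I would guess the form of $Dg(t)$ by formally differentiating the fixed-point identity through the chain rule, arriving at the candidate
\begin{equation*}
L(t)=(I-D_2A(t,g(t)))^{-1}\,D_1A(t,g(t)).
\end{equation*}
To see that this is well defined, the Lipschitz estimate together with the integral formula (3.1) applied in the second variable gives $|D_2A(t,x)|\le k$ as an operator on $B$ for every $(t,x)\in V\times O_B$, so $I-D_2A(t,g(t))$ is invertible on $B$ by Neumann series with $|(I-D_2A(t,g(t)))^{-1}|\le(1-k)^{-1}$. By Proposition~3.6 the partial derivatives exist as continuous linear maps and depend $\beta$-continuously on their base point; composing with the continuous map $t\mapsto(t,g(t))$ shows that $t\mapsto D_1A(t,g(t))\in L_c(T,B)$ is $\beta$-continuous and $t\mapsto D_2A(t,g(t))\in L_c(B,B)$ is norm-continuous (Remark~2.1(ii)).

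Then I would verify that this candidate really is the directional derivative of $g$. Setting $\Delta=g(t+sh)-g(t)$ and decomposing
\begin{equation*}
\Delta=[A(t+sh,g(t+sh))-A(t+sh,g(t))]+[A(t+sh,g(t))-A(t,g(t))],
\end{equation*}
I would rewrite the first bracket via (3.1) in the second variable as $\int_0^1 D_2A(t+sh,g(t)+r\Delta)\Delta\,dr$, yielding
\begin{equation*}
(I-D_2A(t,g(t)))\Delta=[A(t+sh,g(t))-A(t,g(t))]+\rho(s,h),
\end{equation*}
with $\rho(s,h)=\int_0^1[D_2A(t+sh,g(t)+r\Delta)-D_2A(t,g(t))]\Delta\,dr$. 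The contraction estimate forces $|\Delta|\le(1-k)^{-1}|A(t+sh,g(t))-A(t,g(t))|=O(s)$, while continuity of $g$ and of $D_2A$ make the integrand of $\rho$ tend to zero as $s\to 0$; combined, these give $\rho(s,h)=o(s)$. Dividing by $s$, applying $(I-D_2A(t,g(t)))^{-1}$, and invoking differentiability of $A$ in its first argument produces $s^{-1}\Delta\to L(t)h$ in $B$, so $Dg(t)=L(t)\in L_c(T,B)$.

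Finally I would verify $\beta$-continuity of $Dg:V\to L_c(T,B)$. Writing $P(t)=I-D_2A(t,g(t))$ and splitting
\begin{equation*}
Dg(t')-Dg(t)=P(t')^{-1}[D_1A(t',g(t'))-D_1A(t,g(t))]+[P(t')^{-1}-P(t)^{-1}]\,D_1A(t,g(t)),
\end{equation*}
I would control the first summand on any prescribed bounded set $B_T\subset T$ using the uniform bound $|P(t')^{-1}|\le(1-k)^{-1}$ together with the $\beta$-continuity of $t\mapsto D_1A(t,g(t))$, and the second summand using norm-continuity of $t\mapsto P(t)^{-1}$ (a consequence of norm-continuity of $t\mapsto P(t)$ and continuity of inversion on invertible operators) combined with the fact that $D_1A(t,g(t))B_T$ is bounded, hence norm-bounded, in the Banach space $B$. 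The main obstacle I expect is the bookkeeping in the remainder $\rho(s,h)$ and in this last $\beta$-continuity step, where bounded subsets of the Fr\'echet space $T$ must be matched against the Banach norm on $B$; Proposition~3.6 on partial derivatives is precisely what makes this matching tractable.
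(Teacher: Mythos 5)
Your proposal is correct and follows essentially the same route as the paper: Proposition 5.1 yields the continuous fixed-point map $g$, the contraction estimate gives $|D_2A(t,x)|\le k$ so that $\mathrm{id}_B-D_2A(t,g(t))$ is invertible, the directional derivative is identified with $(\mathrm{id}_B-D_2A(t,g(t)))^{-1}D_1A(t,g(t))$ via the integral formula (3.1) in the second variable, and $\beta$-continuity of $Dg$ comes from the same resolvent splitting using Proposition 3.6 and norm-continuity of inversion. The only cosmetic difference is that the paper produces the candidate derivative by a second application of Proposition 5.1 to the variational equation $\Gamma=D_1A(t,x)\hat t+D_2A(t,x)\Gamma$, whereas you invert directly by a Neumann series; both give the same formula and the same continuity properties.
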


Notice that the derivative $\Gamma=Dg(t)\hat{t}$ of the map $g$  satisfies the equation
\begin{equation}
\Gamma=D_1A(t,g(t))\hat{t}+D_2A(t,g(t))\Gamma.
\end{equation}

{\it Proof} of Theorem 5.2.
1. $A$ is continuous. So Proposition 5.1 applies to the restriction of $A$ to $V\times M$ and yields a continuous map $g:V\to B$ with $g(t)=A(t,g(t))\in M$ for all $t\in V$. Choose
$\kappa\in(k,1)$. Each linear map $D_2A(t,x):B\to B$, $(t,x)\in V\times O_B$, is continuous. The contraction property yields
$$
|D_2A(t,x)|=\sup_{|\hat{x}|\le1}|D_2A(t,x)\hat{x}|\le\kappa\quad\text{for all}\quad(t,x)\in V\times O_B
$$
since given $\epsilon=\kappa-k$ and $t\in V$, $x\in O_B$, and $\hat{x}\in B$ with $|\hat{x}|\le1$ there exists $\delta>0$ such that for $h=\frac{\delta}{2}$,
$$
x+h\hat{x}\in O_B\quad\text{and}
$$
\begin{eqnarray*}
|h^{-1}(A(t,x)-A(t,x+h\hat{x})) & - &  D_2A(t,x)\hat{x}|\\
=\quad|h^{-1}(A(t,x)-A(t,x+h\hat{x})) & - & DA(t,x)(0,\hat{x})|\quad\le\quad\epsilon,
\end{eqnarray*}
hence
\begin{eqnarray*}
|h||D_2A(t,x)\hat{x}| & \le & \epsilon|h|+|A(t,x+h\hat{x})-A(t,x)|\\
& \le & \epsilon|h|+k|h\hat{x}|\le(\epsilon+k)|h|=\kappa|h|.
\end{eqnarray*}
Divide by $|h|=h$.

\medskip

2. It follows that each map $id_B-D_2A(t,x)\in L_c(B,B)$, $t\in V$ and $x\in O_B$, is a topological isomorphism. As
$A$ is $C^1_F$-smooth we get that the map
$$
V\times O_B\ni(t,x)\mapsto D_2A(t,x)\in L_c(B,B)
$$
is $\beta$-continuous, or equivalently, continuous with respect to the usual norm-topology on $L_c(B,B)$. As inversion is continuous we see that also the map
$$
V\times O_B\ni(t,x)\mapsto(id_B-D_2A(t,x))^{-1}\in L_c(B,B)
$$
is continuous.

\medskip

3. For all $(t,x,\hat{t})\in V\times O_B\times T$ and for all $\hat{x},\hat{y}$ in $B$ we have
\begin{eqnarray*}
|DA(t,x)(\hat{t},\hat{x})-DA(t,x)(\hat{t},\hat{y})| & = & |DA(t,x)(0,\hat{x}-\hat{y})|\\
& = & |D_2A(t,x)(\hat{x}-\hat{y})|\le\kappa|\hat{x}-\hat{y}|.
\end{eqnarray*}
Hence Proposition 5.1 applies to the version 
$$
\Gamma=D_1A(t,x)\hat{t}+D_2A(t,x)\Gamma
$$
of Eq. (5.1) with parameters $(t,x,\hat{t})\in V\times O_B\times T$ and yields a continuous map $\gamma:V\times O_B\times T\to B$
with
$$
\gamma(t,x,\hat{t})=D_1A(t,x)\hat{t}+D_2A(t,x)\gamma(t,x,\hat{t})\quad\text{for all}\quad(t,x,\hat{t})\in V\times O_B\times T,
$$
or equivalently,
$$
\gamma(t,x,\hat{t})=(id_B-D_2A(t,x))^{-1}D_1A(t,x)\hat{t}\quad\text{for all}\quad(t,x,\hat{t})\in V\times O_B\times T.
$$
This shows that each map $\gamma(t,x,\cdot)$, $(t,x)\in V\times O_B$, belongs to $ L_c(T,B)$.

\medskip

Claim: The map
$$
\tilde{\gamma}:V\times O_B\ni(t,x)\mapsto\gamma(t,x,\cdot)\in L_c(T,B)
$$
is $\beta$-continuous.

\medskip

Proof. Let a sequence $(t_j,x_j)_1^{\infty}$ in $V\times O_B$ converge to a point $(t,x)\in V\times O_B$. Consider a neighbourhood $N$ of $0$ in $B$ and a bounded set $T_b\subset T$. We have to show that for $j\in\N$ suffiently large, $(\tilde{\gamma}(t_j,x_j)-\tilde{\gamma}(t,x))T_b\subset N$. For all $j\in\N$ and all $\hat{t}\in T_b$ we have
$$
|(\tilde{\gamma}(t_j,x_j)-\tilde{\gamma}(t,x))\hat{t}|=|((id_B-D_2A(t_j,x_j))^{-1}D_1A(t_j,x_j)
$$
$$
-(id_B-D_2A(t,x))^{-1}D_1A(t,x))\hat{t}|
$$
$$
\le|(id_B-D_2A(t_j,x_j))^{-1}-(id_B-D_2A(t,x))^{-1})||D_1A(t_j,x_j)\hat{t}|
$$
$$
+|(id_B-D_2A(t,x))^{-1})||(D_1A(t_j,x_j)-D_1A(t,x))\hat{t}|
$$
$$
\le|(id_B-D_2A(t_j,x_j))^{-1}-(id_B-D_2A(t,x))^{-1})|(|(D_1A(t_j,x_j)-D_1A(t,x))\hat{t}|
$$
$$
+|D_1A(t,x)\hat{t}|)+|(id_B-D_2A(t,x))^{-1})||(D_1A(t_j,x_j)-D_1A(t,x))\hat{t}|.
$$
Now it becomes obvious how to complete the proof, using 
$$
|(id_B-D_2A(t_j,x_j))^{-1}-(id_B-D_2A(t,x))^{-1})|\to0\quad\text{as}\quad j\to\infty,
$$
boundedness of $|D_1A(t,x)T_b|$, and $\beta$-continuity of the partial derivative
$$
D_1A:V\times O_B\to L_c(T,B)
$$
due to Proposition 3.6. 

\medskip

4. Consider the continuous map $\xi:V\times T\ni(t,\hat{t})\mapsto\gamma(t,g(t),\hat{t})\in B$. Using part 3 we observe that the map
$V\ni t\mapsto\xi(t,\cdot)\in L_c(T,B)$ is $\beta$-continuous. It remains to show that for all
$t\in V$ and all $\hat{t}\in T$ we have
$$
\lim_{0\neq h\to0}\frac{1}{h}(g(t+h\hat{t})-g(t))=\xi(t,\hat{t}),
$$
which means that the directional derivative $Dg(t)\hat{t}$ exists and equals $\xi(t,\hat{t})$.

\medskip

So let $t\in V$ and $\hat{t}\in T$ be given. Choose a convex neighbourhood $N_B\subset O_B$ of $g(t)$. There exists $\delta>0$ such that for $-\delta\le h\le\delta$,
$$
t+h\hat{t}\in V\quad\text{and}\quad g(t+h\hat{t})\in N_B.
$$ 
Notice that for all $h\in[-\delta,\delta]$ and for all $\theta\in[0,1]$,
$$
g(t)+\theta(g(t+h\hat{t})-g(t))\in N_B.
$$
With the abbreviation 
\begin{eqnarray*}
\xi=\xi(t,\hat{t}) & = & \gamma(t,g(t),\hat{t})=D_1A(t,g(t))\hat{t}+D_2A(t,g(t))\gamma(t,g(t),\hat{t})\\
& = & D_1A(t,g(t))\hat{t}+D_2A(t,g(t))\xi
\end{eqnarray*} 
one finds that
$$
h^{-1}(g(t+h\hat{t})-g(t))-\xi=h^{-1}(A(t+h\hat{t},g(t+h\hat{t}))-A(t,g(t))-\xi,\quad\text{with}\quad 0<|h|<\delta,
$$
equals
\begin{eqnarray*}
= & \, & h^{-1}(A(t+h\hat{t},g(t+h\hat{t}))-A(t+h\hat{t},g(t))-D_1A(t,g(t))\hat{t}-D_2A(t,g(t))\xi\\
& & + h^{-1}(A(t+h\hat{t},g(t))-A(t,g(t)))\\
= & \, & h^{-1}(A(t+h\hat{t},g(t))-A(t,g(t)))-D_1A(t,g(t))\hat{t}\\
& & + h^{-1}(A(t+h\hat{t},g(t+h\hat{t}))-A(t+h\hat{t},g(t))\\
& & -\int_0^1D_2A(t+h\hat{t},g(t)+\theta[g(t+h\hat{t})-g(t)])\xi d\theta\\
& & +\int_0^1\{D_2A(t+h\hat{t},g(t)+\theta[g(t+h\hat{t})-g(t)])-D_2A(t,g(t))\}\xi d\theta\\
= & \, &  h^{-1}(A(t+h\hat{t},g(t))-A(t,g(t)))-D_1A(t,g(t))\hat{t}\\
& & +\int_0^1h^{-1}D_2A(t+h\hat{t},g(t)+\theta[g(t+h\hat{t})-g(t)])[g(t+h\hat{t})-g(t)]d\theta\\
& & - \int_0^1D_2A(t+h\hat{t},g(t)+\theta[g(t+h\hat{t})-g(t)])\xi d\theta\\
& & + \int_0^1\{D_2A(t+h\hat{t},g(t)+\theta[g(t+h\hat{t})-g(t)])-D_2A(t,g(t))\}\xi d\theta\\
= & \, &  h^{-1}(A(t+h\hat{t},g(t))-A(t,g(t)))-D_1A(t,g(t))\hat{t}\\
& & + \int_0^1D_2A(t+h\hat{t},g(t)+\theta[g(t+h\hat{t})-g(t)])[h^{-1}(g(t+h\hat{t})-g(t))-\xi]d\theta\\
& &  + \int_0^1\{D_2A(t+h\hat{t},g(t)+\theta[g(t+h\hat{t})-g(t)])-D_2A(t,g(t))\}\xi d\theta.
\end{eqnarray*}
Hence
$$
|h^{-1}(g(t+h\hat{t})-g(t))-\xi|
$$
is majorized by
\begin{eqnarray*}
\, & \, &|h^{-1}(A(t+h\hat{t},g(t))-A(t,g(t)))-D_1A(t,g(t))\hat{t}|\\
& & +\kappa|h^{-1}(g(t+h\hat{t})-g(t))-\xi|\\
& & + \left|\int_0^1\{D_2A(t+h\hat{t},g(t)+\theta[g(t+h\hat{t})-g(t)])-D_2A(t,g(t))\}\xi d\theta\right|,
\end{eqnarray*}
which yiekds
\begin{eqnarray*}
\, & \, & (1-\kappa)
|h^{-1}(g(t+h\hat{t})-g(t))-\xi|\\
& \le &  |h^{-1}(A(t+h\hat{t},g(t))-A(t,g(t)))-D_1A(t,g(t))\hat{t}|\\
& & + \left|\int_0^1\{D_2A(t+h\hat{t},g(t)+\theta[g(t+h\hat{t})-g(t)])-D_2A(t,g(t))\}\xi d\theta\right|.
\end{eqnarray*}
The first term in the last expression converges to $0$ as $0\neq h\to0$. The map
$$
[-\delta,\delta]\times[0,1]\ni(h,\theta)\mapsto\{D_2A(t+h\hat{t},g(t)+\theta[g(t+h\hat{t})-g(t)])-D_2A(t,g(t))\}\xi\in B
$$
is uniformly continuous with value $0$ on $\{0\}\times[0,1]$. This implies that for $0\neq h\to0$ the last integrand converges to $0$ uniformly with respect to $\theta\in[0,1]$. Therefore the last integral tends to $0$ as $0\neq h\to0$.
$\Box$

\section{An implicit function theorem}

From Theorem 5.2 one obtains the following Implicit Function Theorem in the usual way, paying attention to  
$C^1_F$-smoothness.

\begin{thm}
Let a Fr\'echet space $T$, Banach spaces $B$ and $E$, an open set $U\subset T\times B$, a $C^1_F$-map $f:U\to E$, and a zero $(t_0,x_0)\in U$ of $f$ be given. Assume that $D_2f(t_0,x_0):B\to E$ is bijective.
Then there are open neighbourhoods $V$ of $t_0$ in $T$ and $W$ of $x_0$ in $B$ with $V\times W\subset U$ and a $C^1_F$-map $g:V\to W$ with
$g(t_0)=x_0$ and
$$
\{(t,x)\in V\times W:f(t,x)=0\}=\{(t,x)\in V\times W:x=g(t)\}.
$$
\end{thm}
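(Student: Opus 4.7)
The plan is to reduce the statement to the uniform contraction theorem (Theorem 5.2) via the standard zero-to-fixed-point trick. Since $L = D_2f(t_0,x_0) \in L_c(B,E)$ is bijective between Banach spaces, the open mapping theorem gives $L^{-1} \in L_c(E,B)$. Define
$$
A : U \to B, \qquad A(t,x) = x - L^{-1} f(t,x).
$$
Then $A$ is $C^1_F$-smooth: the projection $(t,x)\mapsto x$ and the continuous linear map $L^{-1}$ are $C^1_F$, $f$ is $C^1_F$ by assumption, so the claim follows from linearity of differentiation and the chain rule (Proposition 3.5). Moreover $f(t,x)=0$ iff $A(t,x)=x$, and $D_2A(t,x) = \mathrm{id}_B - L^{-1}\circ D_2f(t,x)$, so $D_2A(t_0,x_0)=0$.

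Next I would use continuity of $D_2 A$ to install the contraction property. By Proposition 3.6, $D_2A : U \to L_c(B,B)$ is $\beta$-continuous; since $B$ is a Banach space, $\beta$ coincides with the operator norm topology (Remark 2.1 (ii)), so $D_2A$ is norm-continuous. Choose an open convex ball $O_B \subset B$ about $x_0$ and an open neighborhood $V_0 \subset T$ of $t_0$ with $V_0 \times O_B \subset U$ and $\|D_2A(t,x)\| \le 1/2$ on $V_0\times O_B$. The integral formula (3.1), applied to $A(t,\cdot)$ on the convex set $O_B$, then yields
$$
\|A(t,x)-A(t,y)\| \le \tfrac12\,\|x-y\| \quad\text{for all } t\in V_0,\ x,y\in O_B,
$$
so $A$ is a uniform $\tfrac12$-contraction in the second variable on $V_0 \times O_B$.

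Finally, I would build the invariant closed set required by Theorem 5.2. Pick $r>0$ so that $M = \{x\in B : \|x-x_0\|\le r\} \subset O_B$, and use continuity of $A$ at $(t_0,x_0)$ together with $A(t_0,x_0)=x_0$ to shrink $V_0$ to an open neighborhood $V$ of $t_0$ on which $\|A(t,x_0)-x_0\|\le r/2$. For $(t,x)\in V\times M$,
$$
\|A(t,x)-x_0\| \le \|A(t,x)-A(t,x_0)\| + \|A(t,x_0)-x_0\| \le \tfrac12 r + \tfrac12 r = r,
$$
so $A(V\times M)\subset M$. Theorem 5.2 then produces a $C^1_F$-map $g:V\to B$ with $g(t)=A(t,g(t))\in M$, i.e.\ $f(t,g(t))=0$. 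Setting $W$ to be the open ball of radius $r$ about $x_0$, any $(t,x)\in V\times W$ with $f(t,x)=0$ satisfies $x=A(t,x)$ and hence $x=g(t)$ by uniqueness of the contraction fixed point in $O_B$, and $g(V)\subset W$ after a further shrinking of $V$ if necessary (using continuity of $g$ with $g(t_0)=x_0$).

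The only conceptually nontrivial point is the smoothness of $g$; this is precisely the content of Theorem 5.2 and is the reason that theorem was proved in the form stated. All the remaining work is the classical Banach-space implicit function argument, carried out uniformly in the Fréchet parameter $t$.
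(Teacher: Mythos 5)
Your proposal is correct and follows essentially the same route as the paper: your map $A(t,x)=x-L^{-1}f(t,x)$ is literally the paper's fixed-point map (there written via the Taylor remainder $R$), and both proofs obtain the contraction from the vanishing of $D_2A$ at $(t_0,x_0)$ together with norm-continuity, arrange invariance of a closed ball, and invoke Theorem 5.2 for the $C^1_F$-smoothness of $g$. The only cosmetic difference is the endgame: the paper calibrates the radii ($\delta$ versus $3\delta/4$) so that $g(V)\subset W$ holds without further adjustment, whereas you shrink $V$ once more using continuity of $g$ at $t_0$, which is equally valid.
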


\begin{proof}
1. (A fixed point problem) Choose an open neighbourhood $N_{T,1}$ of $t_0$ and a convex open neighbourhood $N_B$ of $x_0$ in $B$ with $N_{T,1}\times N_B\subset U$. The equation
$$
f(t,x)=f(t,x_0)+D_2f(t_0,x_0)[x-x_0]+R(t,x)
$$
defines a $C^1_F$-map $R:N_{T,1}\times N_B\to E$, with $R(t,x_0)=0$ for all $t\in N_{T,1}$,
$$
D_2R(t,x)=D_2f(t,x)-D_2f(t_0,x_0)\quad\text{for all}\quad t\in N_{T,1}\quad\text{and}\quad x\in N_B,
$$
and in particular, $D_2R(t_0,x_0)=0$. The map
$$
N_{T,1}\times N_B\ni(t,x)\mapsto D_2R(t,x)\in L_c(B,E)
$$
is $\beta$-continuous. In order to solve the equation $0=f(t,x)$, $(t,x)\in N_{T,1}\times N_B$, for $x$ as a function of $t$,  observe that this equation is equivalent to
$$
0=f(t,x_0)+D_2f(t_0,x_0)[x-x_0]+R(t,x),\\
$$
or,
\begin{eqnarray*}
x & = & x_0+(D_2f(t_0,x_0))^{-1}[-f(t,x_0)-R(t,x)]\\
& = & x_0-(D_2f(t_0,x_0))^{-1}f(t,x_0)-(D_2f(t_0,x_0))^{-1}R(t,x).
\end{eqnarray*}
The last expression defines a  map
$$
A:N_{T,1}\times N_B\to B
$$
with $A(t_0,x_0)=x_0$, and for $(t,x)\in N_{T,1}\times N_B$, 
$$
0=f(t,x)\quad\text{if and only if}\quad x=A(t,x).
$$ 
The map $A$ is $C^1_F$-smooth  since the linear map $D_2f(t_0,x_0))^{-1}:E\to B$ is continuous, due to the open mapping theorem.

2. (Contraction) For all $t\in N_{T,1}$ and for all $x,\hat{x}$ in $N_B$,
\begin{eqnarray*}
|A(t,\hat{x})-A(t,x)| & = & |-(D_2f(t_0,x_0))^{-1}R(t,\hat{x})+(D_2f(t_0,x_0))^{-1}R(t,x)|\\
& \le & |(D_2f(t_0,x_0))^{-1}|\left|\int_0^1D_2R(t,x+s[\hat{x}-x])[\hat{x}-x]ds\right|.
\end{eqnarray*}
Let 
$$
\epsilon=\frac{1}{2|(D_2f(t_0,x_0))^{-1}|}.
$$
There are an open  neighbourhood $N_{T,2}\subset N_{T,1}$ of $t_0$ and $\delta>0$ such that for
all $t\in N_{T,2}$ and all $x\in B$ with $|x-x_0|\le\delta$,
$$
x\in N_B\quad\text{and}\quad|D_2R(t,x)|=|D_2R(t,x)-D_2R(t_0,x_0)|<\epsilon.
$$
For all $x\neq\hat{x}$ in $B$ with $|x-x_0|\le\delta$ and $|\hat{x}-x_0|\le\delta$ and for all $t\in N_{T,2}$ and  $s\in[0,1]$ it follows that
$|x+s[\hat{x}-x]-x_0|\le\delta$, hence
$$
\left|D_2R(t,x+s[\hat{x}-x])\frac{1}{|\hat{x}-x|}[\hat{x}-x]\right|<\epsilon,
$$
and thereby 
$$
|A(t,\hat{x})-A(t,x)|\le\epsilon|\hat{x}-x||(D_2f(t_0,x_0))^{-1}|=\frac{1}{2}|\hat{x}-x|.
$$

3. (Invariance) By continuity there is an open neighbourhood $N_{T,3}\subset N_{T,2}$ of $t_0$ such that
$$
|A(t,x_0)-A(t_0,x_0)|<\frac{\delta}{4}\quad\text{for all}\quad t\in N_{T,3}.
$$
For all $t\in N_{T,3}$ and $x\in B$ with $|x-x_0|\le\delta$ this yields
\begin{eqnarray*}
|A(t,x)-x_0| & = & |A(t,x)-A(t_0,x_0)|\le|A(t,x)-A(t,x_0)|+|A(t,x_0)-A(t_0,x_0)|\\
& < & \frac{1}{2}|x-x_0|+\frac{\delta}{4}\le\frac{\delta}{2}+\frac{\delta}{4}=\frac{3\delta}{4}.
\end{eqnarray*}

4. Set $V=N_{T,3}$, $O_B=\{x\in B:|x-x_0|<\delta\}$, and 
$$
M=\left\{x\in B:|x-x_0|\le\frac{3\delta}{4}\right\},
$$ 
and apply Theorem 5.2 to the restriction of $A$ to the set $V\times O_B$. This yields a $C^1_F$-map $g:V\to B$ with
$g(t)=A(t,g(t))\in O_B$ for all $t\in V$. Using Part 3 we get $|g(t)-x_0|<\frac{3\delta}{4}$ for all $t\in V$. Set 
$$
W=\left\{x\in B:|x-x_0|<\frac{3\delta}{4}\right\}.
$$ 
Then $g(V)\subset W$. From $g(t)=A(t,g(t))$ for all $t\in V$ we obtain $0=f(t,g(t))$ for these $t$. Conversely, if $0=f(t,x)$ for $(t,x)\in V\times W\subset V\times M$, then $x=A(t,x)$, hence $x=g(t)$. In particular,
$x_0=g(t_0)$.
\end{proof}

\section{Submanifolds by transversality and embedding}

\begin{prop}
Let a $C^1_F$-map $g: F\supset U\to G$ and a $C^1_F$-submanifold $M\subset G$ of finite codimension $m$ be given. Assume that $g$ and $M$ are transversal at a point $x\in g^{-1}(M)$ in the sense that
$$
G=Dg(x)F+T_{g(x)}M.
$$
Then there is an open neighbourhood $V$ of $x$ in $U$ so that $V\cap g^{-1}(M)$ is a $C^1_F$-submanifold of codimension $m$ in $F$, and $T_x(g^{-1}(M)\cap V)=Dg(x)^{-1}T_{g(x)}M$.

\medskip

In case $\dim\,G=m$, $M=\{g(x)\}$, and $Dg(x)$ surjective the assertion holds with $T_{g(x)}M=\{0\}$
\end{prop}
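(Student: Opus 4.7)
The strategy is to reduce the statement to a zero-set problem for a $C^1_F$-map into a finite-dimensional space and then invoke the implicit function theorem (Theorem 6.1). First, by the definition of a $C^1_F$-submanifold applied at $g(x)\in M$, one gets a direct sum decomposition $G=G_M\oplus H$ with $\dim H=m$, an open neighbourhood $U_G$ of $g(x)$ in $G$, and a $C^1_F$-diffeomorphism $K:U_G\to K(U_G)\subset G$ with $K(M\cap U_G)=K(U_G)\cap G_M$. Let $q:G\to H$ be the continuous linear projection with kernel $G_M$. Choose an open neighbourhood $V$ of $x$ in $U$ with $g(V)\subset U_G$, and define the $C^1_F$-map
\[
h=q\circ K\circ g:V\to H,
\]
so that $g^{-1}(M)\cap V=h^{-1}(0)$. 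By the chain rule, $Dh(x)=q\circ DK(g(x))\circ Dg(x)$. Because $DK(g(x))$ is a topological isomorphism of $G$ sending $T_{g(x)}M$ onto $G_M=\ker q$, the transversality hypothesis $G=Dg(x)F+T_{g(x)}M$ gives $H=q\circ DK(g(x))(Dg(x)F)=Dh(x)F$, i.e. $Dh(x):F\to H$ is surjective.

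Since $H$ is finite-dimensional, choose $f_1,\dots,f_m\in F$ with $Dh(x)f_1,\dots,Dh(x)f_m$ a basis of $H$. Then $F_0=\operatorname{span}(f_1,\dots,f_m)$ is a finite-dimensional (hence closed) complement of the closed subspace $F_1=\ker Dh(x)$, so $F=F_1\oplus F_0$ topologically, and $Dh(x)|_{F_0}:F_0\to H$ is a bijection between finite-dimensional normed spaces. Write $x=x_1+x_0$ accordingly. Now apply Theorem 6.1 to the map
\[
\tilde h:F_1\times F_0\supset \tilde V\to H,\qquad \tilde h(y_1,y_0)=h(y_1+y_0),
\]
with parameter Fr\'echet space $T=F_1$ and Banach space $B=F_0$: at $(x_1,x_0)$ its second partial derivative equals $Dh(x)|_{F_0}$, which is bijective. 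This yields open neighbourhoods $V_1$ of $x_1$ in $F_1$ and $V_0$ of $x_0$ in $F_0$ with $V_1+V_0\subset V$ and a $C^1_F$-map $\phi:V_1\to V_0$ such that
\[
\{(y_1,y_0)\in V_1\times V_0:h(y_1+y_0)=0\}=\{(y_1,\phi(y_1)):y_1\in V_1\},
\]
and $\phi(x_1)=x_0$.

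To exhibit $g^{-1}(M)\cap(V_1+V_0)$ as a $C^1_F$-submanifold, define the $C^1_F$-map
\[
\Psi:V_1+V_0\to F,\qquad \Psi(y_1+y_0)=y_1+(y_0-\phi(y_1)).
\]
It is a bijection onto its image $V_1+(V_0-\phi(V_1))$ (an open neighbourhood of $x_1$ in $F$, shrunk if necessary to an open product neighbourhood) with $C^1_F$-inverse $(z_1,z_0)\mapsto z_1+\phi(z_1)+z_0$, so $\Psi$ is a $C^1_F$-diffeomorphism and
\[
\Psi\bigl(g^{-1}(M)\cap(V_1+V_0)\bigr)=\Psi(V_1+V_0)\cap F_1,
\]
which exhibits $g^{-1}(M)$ near $x$ as a $C^1_F$-submanifold modelled over $F_1$, of codimension $m$. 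The tangent cone at $x$ is the image of $F_1$ under $(D\Psi(x))^{-1}$, which equals $F_1=\ker Dh(x)$. Because $DK(g(x))$ restricts to an isomorphism $T_{g(x)}M\to G_M=\ker q$, one has $\ker(q\circ DK(g(x)))=T_{g(x)}M$, and therefore $\ker Dh(x)=Dg(x)^{-1}T_{g(x)}M$, as claimed. The degenerate case $\dim G=m$, $M=\{g(x)\}$ is covered by taking $K=\operatorname{id}$, $G_M=\{0\}$, $q=\operatorname{id}_G$, so that $h=g-g(x)$ and the argument goes through unchanged with $T_{g(x)}M=\{0\}$.

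The only nontrivial point is the surjectivity of $Dh(x)$ onto the finite-dimensional space $H$: once that is in hand, finite-dimensionality of $H$ guarantees a topological splitting of $F$ for free, and Theorem 6.1 then immediately delivers the local graph representation needed for the chart $\Psi$.
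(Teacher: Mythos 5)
Your proposal is correct and follows essentially the same route as the paper: represent $g^{-1}(M)$ near $x$ as the zero set of $h=(\text{projection})\circ K\circ g$ into an $m$-dimensional space, get surjectivity of $Dh(x)$ from transversality, split $F=\ker Dh(x)\oplus F_0$ with $F_0$ finite-dimensional, and apply Theorem 6.1 to obtain a graph/chart representation (the paper differs only cosmetically, normalizing $DK(g(x))=\operatorname{id}$ and projecting along $T_{g(x)}M$ onto a complement $Q\subset Dg(x)F$, and it identifies the tangent space by the inclusion $T_x\subset\ker Dh(x)$ plus equal codimension). The one step you state without justification is that $(D\Psi(x))^{-1}F_1=F_1$: this needs the observation $D\phi(x_1)=0$, which follows by differentiating $h(y_1+\phi(y_1))=0$ at $x_1$ and using $F_1=\ker Dh(x)$ together with injectivity of $Dh(x)|_{F_0}$ (also, in the degenerate case $K$ should be the translation $y\mapsto y-g(x)$ rather than the identity).
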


{\bf Proof} for $M\neq\{g(x)\}$.

\medskip

1. There are an open neighbourhood $N_G$ of $\gamma=g(x)$ in $G$ and a $C^1_F$-diffeomorphism $K:N_G\to G$ onto an open set $U_G\subset G$ such that $K(\gamma)=0$, $K(N_G\cap M)=U_G\cap T_{\gamma}M$. We may assume $DK(\gamma)=id$ since otherwise we can replace $K$ with $DK(\gamma)^{-1}\circ K$. Then $DK(\gamma)=id$ maps $T_{\gamma}M$ onto itself.

\medskip

2. By transversality and codim $M=m$ we find a subspace $Q\subset Dg(x)F$ of dimension $m$ which complements $T_{\gamma}M$ in $G$,
$$
G=T_{\gamma}M\oplus Q.
$$ 
The projection $P:G\to Q$ along $T_{\gamma}M$ onto $Q$ is linear and continuous (see \cite[Theorem 5.16]{R}),
and
$PDK(\gamma)Dg(x)=PDg(x)$ is surjective. The preimage $U_F=g^{-1}(N_G)$ is open, with $x\in U_F\subset U$.
For $z\in U_F$ we have
$$
z\in g^{-1}(M)\cap U_F\Leftrightarrow g(z)\in M\cap N_G\Leftrightarrow PK(g(z))=0.
$$
For the $C^1_F$-map $h=P\circ K\circ(g|_{U_F})$ we infer $g^{-1}(M)\cap U_F=h^{-1}(0)$. The derivative
$Dh(x):F\to Q$ is surjective. It follows that there is a subspace $R$ of $F$ with $\dim\, R=\dim\,Q=m$ and
$$
F=Dh(x)^{-1}(0)\oplus R.
$$
The restriction $Dh(x)|_R$ is an isomorphism.

\medskip

3. The $C^1_F$-map
$$
H:\{(z,r)\in Dh(x)^{-1}(0)\times R:x+z+r\in U_F\}\ni(z,r)\mapsto h(x+z+r)\in Q
$$
satisfies $H(0,0)=0$. Because of $D_2H(0,0)\hat{r}=Dh(x)\hat{r}$ for all $\hat{r}\in R$ and $\dim\, R=\dim\,Q$ the map $D_2H(0,0)$ is an isomorphism. Theorem 6.1  yields convex open neighbourhoods $V_H$ of $0$ in $Dh(x)^{-1}(0)$ and $V_R$ of $0$ in $R$, with $x+V_H+V_R\subset U_F$, and a $C^1_F$-map $w:V_H\to V_R$ with $w(0)=0$ and
$$
(V_H\times V_R)\cap H^{-1}(0)=\{(z,r)\in V_H\times V_R:r=w(z)\}.
$$
For every $y\in x+V_H+V_R$, $y=x+z+r$ with $z\in V_H$ and $r\in V_R$, we have
$$
y\in g^{-1}(M)\cap U_F\Leftrightarrow h(y)=0\Leftrightarrow h(x+z+r)=0\Leftrightarrow H(z,r)=0\Leftrightarrow r=w(z).
$$
Hence $g^{-1}(M)\cap(x+V_H+V_R)=\{x+z+w(z):z\in V_H\}$, which implies that $g^{-1}(M)\cap(x+V_H+V_R)$ is a  $C^1_F$-submanifold of $F$, with codimension equal to $\dim\,R=\dim\,Q=m$. Set $V=x+V_H+V_R$.

\medskip

4. (On tangent spaces) From $g^{-1}(M)\cap U_F=h^{-1}(0)$ and $h(x)=0$ we get $h(g^{-1}(M)\cap V)=\{0\}$, hence
$Dh(x)T_x(g^{-1}(M)\cap V)=\{0\}$, or
$$
T_x(g^{-1}(M)\cap V)\subset Dh(x)^{-1}(0).
$$
As both spaces have the same codimension $m$ they are equal. For every $v\in F$ we have
\begin{eqnarray*}
v\in Dh(x)^{-1}(0) & \Leftrightarrow &  Dh(x)v=0\Leftrightarrow PDg(x)v=0\\
&  \Leftrightarrow &  Dg(x)v\in P^{-1}(0)=T_{g(x)}M\Leftrightarrow v\in Dg(x)^{-1}T_xM.
\end{eqnarray*}
Using this we obtain
$$
T_x(g^{-1}(M)\cap V)=Dh(x)^{-1}(0)=Dg(x)^{-1}T_xM.\qquad \Box
$$

\begin{prop}
Suppose $W$ is an open subset of a finite-dimensional normed space $V$, $b\in W$, $F$ is a Fr\'echet space, 
$j:V\supset W\to F$ is a $C^1_F$-map, and $Dj(b)$ is injective.
Then there is an open neighbourhood $N$ of $j(b)$ in $F$ such that $N\cap j(W)$ is a $C^1_F$-submanifold of $F$, with $T_{j(b)}(N\cap j(W))=Dj(b)V$ (hence $\dim\,(N\cap j(W))=\dim\,V$).
\end{prop}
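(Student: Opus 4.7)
The plan is to build a submanifold chart at $j(b)$ that locally straightens the image of $j$ by reducing to a finite-dimensional inverse function theorem. Set $E=Dj(b)V$, which is finite-dimensional and therefore closed in the Fr\'echet space $F$. Extending a dual basis of $E$ to continuous linear functionals on $F$ by Hahn--Banach (available by local convexity) furnishes a continuous linear projection $P:F\to E$; then $H=\ker P$ is a closed complement, $F=E\oplus H$, and $Q=\mathrm{id}_F-P$ is the projection onto $H$.

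Next I reduce to finite dimensions by composing with $P$. The composite $P\circ j:W\to E$ is $C^1_F$ by Proposition~3.5, and $D(P\circ j)(b)=P\circ Dj(b)=Dj(b)$ is a linear isomorphism between the finite-dimensional spaces $V$ and $E$. Since $C^1_F$-smoothness coincides with classical $C^1$-smoothness on finite-dimensional domains (Proposition~3.3), the usual inverse function theorem supplies open neighbourhoods $W_0\subset W$ of $b$ and $E_0\subset E$ of $P(j(b))$ together with a $C^1_F$-inverse $\phi:E_0\to W_0$ of $(P\circ j)|_{W_0}$.

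Set $U=\{x\in F:P(x)\in E_0\}$, an open neighbourhood of $j(b)$, and define
$$K:U\to U,\qquad K(x)=x-Q\bigl(j(\phi(P(x)))\bigr).$$
Both $K$ and the candidate inverse $L(y)=y+Q(j(\phi(P(y))))$ are $C^1_F$ by the chain rule (Proposition~3.5) combined with linearity of $P$ and $Q$; using $PQ=0$ one verifies $L\circ K=\mathrm{id}_U=K\circ L$, so $K$ is a $C^1_F$-diffeomorphism of $U$ onto itself. A direct computation shows $K(x)\in E$ iff $Q(x)=Q(j(\phi(P(x))))$ iff $x=j(\phi(P(x)))\in j(W_0)$, hence $K(U\cap j(W_0))=U\cap E$, exhibiting $U\cap j(W_0)$ as a $C^1_F$-submanifold of $F$ modelled on $E$. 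Differentiating $K$ at $j(b)$, using $D\phi(P(j(b)))=Dj(b)^{-1}$ and $Q|_E=0$, yields $DK(j(b))=\mathrm{id}_F$, so $T_{j(b)}(U\cap j(W_0))=DK(j(b))^{-1}(E)=Dj(b)V$.

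The step I expect to require the most care is the passage from $U\cap j(W_0)$ to the $N\cap j(W)$ of the statement: one must shrink $U$ to an open neighbourhood $N\ni j(b)$ small enough that $N\cap j(W)=N\cap j(W_0)$, so that distant branches of $j$ outside $W_0$ do not spoil the submanifold structure. This uses the local injectivity of $j$ at $b$, which is inherited from the fact that $(P\circ j)|_{W_0}$ is a diffeomorphism; in practice one shrinks $W$ about $b$ (and $N$ correspondingly) so that any $v\in W$ with $j(v)\in N$ satisfies $v=\phi(P(j(v)))\in W_0$.
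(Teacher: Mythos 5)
Your main construction is correct and is essentially the paper's argument: you split $F=E\oplus H$ with $E=Dj(b)V$ finite-dimensional and complemented (the paper quotes Rudin for the closed complement and the continuity of the projection, you use Hahn--Banach; same substance), apply the finite-dimensional inverse function theorem to $P\circ j$, and then describe the image of a small neighbourhood of $b$. The only real difference is presentational: the paper represents that image as the graph $\{y+h(y):y\in E_0\}$ of the $C^1_F$-map $h=(\mathrm{id}_F-P)\circ j\circ\phi$ and stops there, whereas you write down the straightening diffeomorphism $K$ explicitly, check $K(U\cap j(W_0))=U\cap E$, and compute $DK(j(b))=\mathrm{id}_F$; this has the small advantage of verifying the tangent-space assertion $T_{j(b)}=Dj(b)V$ explicitly, which the paper's proof does not spell out.

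The one place where you go beyond the paper, the final paragraph, is also where the argument does not work as stated: $W$ is part of the data and cannot be shrunk, and without injectivity or properness of $j$ on all of $W$ there is in general no neighbourhood $N$ of $j(b)$ with $N\cap j(W)=N\cap j(W_0)$. Points $j(v)$ with $v\in W$ far from $b$ may accumulate at $j(b)$ (think of a $C^1$ curve in $\R^2$ which is a straight segment near $b$ and for large arguments spirals back onto $j(b)$); then $N\cap j(W)$ fails to be locally connected at $j(b)$, hence is not a submanifold, for every choice of $N$. So the literally stated conclusion should be read with $j(W)$ replaced by $j(W_1)$ for a suitable open neighbourhood $W_1\subset W$ of $b$ --- which is exactly what your chart $K$ (and the paper's graph representation) establishes, and it is the only form in which the proposition is used later (Sections 16 and 17 apply it to the image of a shrunken neighbourhood). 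Note that the paper's own proof has the same silent discrepancy: it proves that $j(W_1)$ is a $C^1_F$-submanifold and never constructs the neighbourhood $N$ of the statement. In short, your proof of the shrunk-domain version is complete and equivalent to the paper's; your proposed bridge to the literal $N\cap j(W)$ form via ``local injectivity'' is not valid, but no such bridge exists, and the paper does not supply one either.
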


\begin{proof} 
1.The topology induced  by $F$ on the finite-dimensional subspace $Y=Dj(b)V$ of $F$ is given by a norm \cite[Section 1.19]{R}, and $Y$ has a closed complementary space $Z\subset F$, see \cite[Lemma 4.21]{R}. The projection $P:F\to F$ along $Z$ onto $Y$ is linear and continuous (\cite[Theorem 5.16]{R}). The map $P\circ j$ is $C^1_F$-smooth and defines a $C^1_F$-map $W\to Y$. Its derivative at $b$ is an isomorphism $V\to Y$ (use $Py=y$ on $Y$ and the injectivity of $Dj(b)$). The Inverse Mapping Theorem (for maps between finite-dimensional normed spaces) yields a $C^1_F$-map
$g:Y\cap U\to V$, $U$ open in $F$ and $P(j(b))\in Y\cap U$, such that $g(P(j(b)))=b$, and an open neighbourhood $W_1\subset W$ of $b$ in $V$ such that $g(Y\cap U)=W_1$, $(P\circ j)(W_1)=Y\cap U$, $(g\circ(P\circ j))(v)=v$ on $W_1$, and $((P\circ j)\circ g)(y)=y$ on $Y\cap U$. It follows that the map $h:Y\cap U\to Z$ given by
$$
h(y)=((id_F-P)\circ j\circ g)(y)
$$ 
is $C^1_F$-smooth.

\medskip

2. Proof of $j(W_1)=\{y+h(y):y\in Y\cap U\}$ : (a) For $y\in Y\cap U$,
\begin{eqnarray*}
y+h(y) & = & y+ ((id_F-P)\circ j\circ g)(y)\\
& = & ((P\circ j)\circ g)(y)+(j\circ g)(y)-((P\circ j)\circ g)(y)=j(g(y))\in j(W_1).
\end{eqnarray*}

(b) For $x\in j(W_1)$ there exists $y\in Y\cap U$ with 
\begin{eqnarray*}
x & = & j(g(y))=((P\circ j)\circ g)(y)+j(g(y))-(P\circ j)(g(y))\\
& = & y+((id_F-P)\circ j\circ g)(y)=y+h(y).
\end{eqnarray*}
The graph representation of $j(W_1)$ now yields that it is a $C^1_F$-submanifold of $F$.
\end{proof}

\section{$C^1_{MB}$-maps which are not $C^1_F$-smooth}

Let $N$ denote the Banach space of sequences $\xi=(x_j)_1^{\infty}$ in $\R$ with limit $0$, with $|\xi|=\max_{j\in\N}|x_j|$. 
For  $j\in\N$ choose a continuously differentiable function $f_j:\R\to\R$ with $f_j(0)=0$ and $f'_j(u)=ju$ on $[-1/j,1/j]$
and
$$
\sup_{u\in\R}|f'_j(u)|\le 2\quad\text{for all}\quad j\in\N.
$$
Then the sequence $(f'_j)_1^{\infty}$ is not equicontinuous.

\medskip

For every $\xi=(x_j)_1^{\infty}\in N$ and $\eta=(y_j)_1^{\infty}\in N$ we have 
$$
|f_j(x_j)|\le 2|x_j|\quad\text{and}\quad|f_j(x_j)-f_j(y_j)|\le 2|x_j-y_j|\quad\text{for all}\quad j\in\N,
$$
and we obtain a Lipschitz continuous map 
$$
f:N\ni\xi\mapsto (f_j(x_j))_1^{\infty}\in N.
$$
Notice that for $\xi$ and $\eta$ in $N$ we also have $(f'_j(x_j)y_j)_1^{\infty}\in N$.

\medskip

\begin{prop} The map $f$ is $C^1_{MB}$-smooth, with
$$
Df(\xi)\eta=(f'_j(x_j)y_j)_1^{\infty}\quad\text{for}\quad \xi=(x_j)_1^{\infty},\eta=(y_j)_1^{\infty}.
$$
\end{prop}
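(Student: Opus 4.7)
The plan is to verify directly the definition of $C^1_{MB}$-smoothness: (a) $f$ is continuous (already noted, $f$ is Lipschitz with constant $2$); (b) for every $\xi,\eta\in N$ the directional derivative $Df(\xi)\eta$ exists and equals $(f'_j(x_j)y_j)_1^{\infty}$; (c) the map $(\xi,\eta)\mapsto Df(\xi)\eta$ is continuous from $N\times N$ into $N$; and (d) each $Df(\xi)$ is linear and continuous. Item (d) is immediate from $|f'_j(x_j)y_j|\le 2|y_j|$, which gives $|Df(\xi)\eta|\le 2|\eta|$ and also shows $Df(\xi)\eta\in N$ because $|y_j|\to0$. Linearity is clear coordinate-wise.

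For item (b), fix $\xi=(x_j),\eta=(y_j)$ in $N$ and consider the $j$-th coordinate of $h^{-1}(f(\xi+h\eta)-f(\xi))-Df(\xi)\eta$. By the mean value theorem this equals $(f'_j(x_j+\theta_{j,h}hy_j)-f'_j(x_j))y_j$ for some $\theta_{j,h}\in(0,1)$, whose absolute value is at most $4|y_j|$. Given $\epsilon>0$, choose $J$ so large that $|y_j|<\epsilon/4$ for $j>J$; then for these $j$ the error is below $\epsilon$ uniformly in $h$. For the finitely many remaining indices $j\le J$, continuity of each $f'_j$ at $x_j$ gives a common $\delta>0$ with $|f'_j(x_j+\theta hy_j)-f'_j(x_j)|\,|y_j|<\epsilon$ for $|h|<\delta$. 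Taking the supremum over $j$ shows the difference tends to $0$ in the $N$-norm.

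For item (c), assume $\xi^{(k)}\to\xi$ and $\eta^{(k)}\to\eta$ in $N$, and split
\[
f'_j(x^{(k)}_j)y^{(k)}_j-f'_j(x_j)y_j=\bigl(f'_j(x^{(k)}_j)-f'_j(x_j)\bigr)y^{(k)}_j+f'_j(x_j)(y^{(k)}_j-y_j).
\]
The second summand has $\sup_j$-norm at most $2|\eta^{(k)}-\eta|\to0$. For the first summand, given $\epsilon>0$, pick $K_0$ with $|\eta^{(k)}-\eta|<\epsilon/8$ for $k\ge K_0$ and $J$ with $|y_j|<\epsilon/8$ for $j>J$, so that $|y^{(k)}_j|<\epsilon/4$ for all $k\ge K_0$ and $j>J$, making the summand bounded by $\epsilon$ there. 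For the finitely many indices $j\le J$, use continuity of each $f'_j$ together with $x^{(k)}_j\to x_j$ and boundedness of $|y^{(k)}_j|$ to bound the summand by $\epsilon$ for $k$ large enough. Taking the supremum proves continuity.

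The main obstacle is that the family $(f'_j)$ is not equicontinuous, so one cannot hope for uniform control of the MVT error across all $j$ simultaneously. The decisive observation that overcomes this is that any element of $N$ has coordinates tending to $0$; this lets us absorb all ``bad'' high indices into the crude bound $|f'_j|\le 2$, leaving only a finite block where ordinary continuity of the individual $f'_j$ is applied.
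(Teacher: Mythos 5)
Your proof is correct and follows essentially the same route as the paper: both arguments rest on splitting the index set into a tail, where the uniform bound $|f'_j|\le 2$ and the decay of the coordinates of elements of $N$ give smallness, and a finite block, where ordinary continuity of the finitely many $f'_j$ applies. The only cosmetic differences are your use of the mean value theorem in place of the paper's integral representation of the difference quotient, and a slightly different (but equivalent) decomposition in the continuity step, where the paper reduces to the fixed sequence $\eta_0$ while you control $|y^{(k)}_j|$ uniformly in $k$ on the tail.
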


\begin{proof} 1. (Directional derivatives) For $\xi=(x_j)_1^{\infty}\in N$ and $\eta=(y_j)_1^{\infty}\in N$ set $A(\xi,\eta)=(f'_j(x_j)y_j)_1^{\infty}\quad(\in N)$. For every real $h\neq0$ we have
\begin{eqnarray*}
|h^{-1}(f(\xi+h\eta)-f(\xi))-A(\xi,\eta)| & = & \sup_{j\in\N}|h^{-1}(f_j(x_j+hy_j)-f_j(x_j))-f'_j(x_j)y_j|\\
& = & \sup_{j\in\N}|\int_0^1(f'_j(x_j+\theta hy_j)y_j-f'_j(x_j)y_j)d\theta|\\
& \le & \sup_{j\in\N}\max_{0\le\theta\le1}|(f'_j(x_j+\theta hy_j)-f'_j(x_j))y_j|.
\end{eqnarray*} 
Let $\epsilon>0$. There exists $j(\epsilon)\in\N$ with
$$
|y_j|\le\frac{\epsilon}{8}\quad\text{for all integers}\quad j> j(\epsilon).
$$
For each $j\in\N$ with $j\le j(\epsilon)$ the continuity of $f'_j$ yields $h_j>0$ such that for all $h\in(-h_j,h_j)$ and for all $\theta\in[0,1]$
we have
$$
|f'_j(x_j+\theta hy_j)-f'_j(x_j)|<\frac{\epsilon}{2(|\eta|+1)}.
$$
For reals $h$ with $|h|<\min\{h_j:j\in\N,1\le j\le  j(\epsilon)\}$ we obtain
$$
|h^{-1}(f(\xi+h\eta)-f(\xi))-A(\xi,\eta)|
$$
$$
 \le\sup_{j\in\N}\max_{0\le\theta\le1}|(f'_j(x_j+\theta hy_j)-f'_j(x_j))y_j|
$$
$$
\le\max_{j=1,\ldots,j(\epsilon)}\max_{0\le\theta\le1}|(f'_j(x_j+\theta hy_j)-f'_j(x_j))y_j|
$$
$$
+ \sup_{j\in\N:j>j(\epsilon)}\max_{0\le\theta\le1}|(f'_j(x_j+\theta hy_j)-f'_j(x_j))y_j|
$$
$$
\le\frac{\epsilon|\eta|}{2(|\eta|+1)}+(2+2)\frac{\epsilon}{8}<\epsilon.
$$
We have shown that 
$$
Df(\xi)\eta=\lim_{0\neq h\to0}h^{-1}(f(\xi+h\eta)-f(\xi))
$$
exists and equals $A(\xi,\eta)$.

\medskip

2. (Continuity of $N\times N\ni(\xi,\eta)\mapsto Df(\xi)\eta\in N$) Let $\xi_0=(x_{0j})_1^{\infty}\in N$ and
$\eta_0=(y_{0j})_1^{\infty}\in N$ be given. For all $\xi=(x_j)_1^{\infty}\in N$ and $\eta=(y_j)_1^{\infty}\in N$
we have
$$
|Df(\xi)\eta-Df(\xi_0)\eta_0|\le|(Df(\xi)-Df(\xi_0))\eta|+|Df(\xi_0)(\eta-\eta_0)|,
$$
and $|Df(\xi_0)(\eta-\eta_0)|=\sup_{j\in\N}|f'_j(x_{0j})(y_j-y_{0j})|\le 2|\eta-\eta_0|$ while
$$
|(Df(\xi)-Df(\xi_0))\eta|=\sup_{j\in\N}|(f'_j(x_j)-f'_j(x_{0j}))y_j|
$$
$$
\le\sup_{j\in\N}|(f'_j(x_j)-f'_j(x_{0j}))(y_j-y_{0j})|+\sup_{j\in\N}|(f'_j(x_j)-f'_j(x_{0j}))y_{0j}|
$$
$$
\le(2+2)|\eta-\eta_0|+\sup_{j\in\N}|(f'_j(x_j)-f'_j(x_{0j}))y_{0j}|.
$$
From the preceding estimates it is obvious how to complete the proof provided we have
$$
\sup_{j\in\N}|(f'_j(x_j)-f'_j(x_{0j}))y_{0j}|\to0\quad\text{as}\quad \xi\to\xi_0.
$$
In order to prove this let $\epsilon>0$ be given. There exists $j(\epsilon)\in\N$ such that for all integers $j>j(\epsilon)$
we have $4|y_{0j}|<\frac{\epsilon}{2}$.  For each $j\in\N$ with $1\le j\le j(\epsilon)$ the continuity of $f'_j$ yields $\delta_j>0$ with
$$
|f'_j(x)-f'_j(x_{0j})|<\frac{\epsilon}{2(|\eta_0|+1)}\quad\text{for all}\quad x\in\R\quad\text{with}\quad|x-x_{0j}|<\delta_j.
$$
For every $\xi=(x_j)_1^{\infty}\in N$ with $|\xi-\xi_0|<\min_{j=1,\ldots,j(\epsilon)}\delta_j$ we get
$$
\max_{j=1,\ldots,j(\epsilon)}|(f'_j(x_j)-f'_j(x_{0j}))y_{0j}|\le\frac{\epsilon|\eta_0|}{2(|\eta_0|+1)}<\frac{\epsilon}{2}.
$$
It follows that for such $\xi$,
$$
\sup_{j\in\N}|(f'_j(x_j)-f'_j(x_{0j}))y_{0j}|\le\max_{j=1,\ldots,j(\epsilon)}|(f'_j(x_j)-f'_j(x_{0j}))y_{0j}|
$$
$$
+
\sup_{j\in\N:j>j(\epsilon)}|(f'_j(x_j)-f'_j(x_{0j}))y_{0j}|
$$
$$
<\frac{\epsilon}{2}+(2+2)\,\sup_{j\in\N:j>j(\epsilon)}|y_{0j}|\le\frac{\epsilon}{2}+\frac{\epsilon}{2}=\epsilon.
$$
\end{proof}

\begin{prop} 
There is a sequence $(\xi_k)_1^{\infty}$ in $N$ with $\lim_{k\to\infty}\xi_k=0\in N$ such that $(Df(\xi_k))_1^{\infty}$ does not converge to $Df(0)$ in the $\beta$-topology.
\end{prop}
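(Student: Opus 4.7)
The plan is to exhibit the sequence explicitly and then witness non-convergence by testing the derivative on a suitable bounded set. Write $e_k=(\delta_{jk})_{j=1}^{\infty}\in N$ for the standard unit vectors, and set
$$
\xi_k=\frac{1}{k}e_k,\qquad k\in\N.
$$
Then $|\xi_k|=1/k\to0$, so $\xi_k\to0$ in $N$. Note also that $Df(0)=0$ since $f'_j(0)=j\cdot 0=0$ for every $j\in\N$, by Proposition 8.1.

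First I would compute $Df(\xi_k)$ using the formula from Proposition 8.1. The $j$-th component of $\xi_k$ equals $1/k$ if $j=k$ and $0$ otherwise. Since $f'_j(u)=ju$ on $[-1/j,1/j]$, we have $f'_k(1/k)=k\cdot(1/k)=1$, while $f'_j(0)=0$ for $j\neq k$. Hence for every $\eta=(y_j)_1^{\infty}\in N$,
$$
Df(\xi_k)\eta=\bigl(f'_j((\xi_k)_j)y_j\bigr)_1^{\infty}=y_k\,e_k.
$$

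Next I would produce a bounded set and a neighbourhood that rule out $\beta$-convergence. Take the bounded set $B=\{e_j:j\in\N\}\subset N$ (bounded since $|e_j|=1$ for every $j$) and the neighbourhood $N_0=\{y\in N:|y|<1/2\}$ of $0$ in $N$. Since $Df(\xi_k)e_k=e_k$ has norm one, we obtain
$$
Df(\xi_k)B\ni e_k\notin N_0\qquad\text{for every }k\in\N,
$$
so $Df(\xi_k)-Df(0)=Df(\xi_k)\notin U_{N_0,B}$ for any $k$. By the characterisation of $\beta$-convergence in Remark 2.1(i), the sequence $(Df(\xi_k))_1^{\infty}$ does not converge to $Df(0)$ in the topology $\beta$ on $L_c(N,N)$. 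There is essentially no obstacle here; the only point requiring a moment's care is to verify that $\xi_k$ indeed lies in $[-1/k,1/k]$ at the $k$-th coordinate so that the explicit formula $f'_k(1/k)=1$ applies, which is built into the choice of $\xi_k$.
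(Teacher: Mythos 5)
Your proof is correct and follows essentially the same route as the paper: the same sequence $\xi_k=\frac{1}{k}e_k$ with the same test vectors $e_k$ and the computation $f'_k(1/k)=1$. The only cosmetic difference is that you verify failure of uniform convergence on the bounded set $\{e_j:j\in\N\}$ directly from the definition of $\beta$, whereas the paper invokes Remark 2.1(ii) and estimates the operator norm $|Df(\xi_k)-Df(0)|\ge1$; these amount to the same thing since $N$ is a Banach space.
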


\begin{proof} 
Recall $f'_j(1/j)=1$ for all $j\in\N$. For $k\in\N$ consider $\eta_k=(\delta_{kj})_{j=1}^{\infty}\in N$ and $\xi_k=\frac{1}{k}\eta_k\in N$. We have
$|\eta_k|=1$ for all $k\in\N$, and $\xi_k\to0$ in $N$ as $k\to\infty$. With $Df(0)=0$,
$$
|Df(\xi_k)-Df(0)|=|Df(\xi_k)|\ge|Df(\xi_k)\eta_k|=|f'_k(1/k)|=1
$$
for all $k\in\N$. 
\end{proof}

So, $f$ is not $C^1_F$-smooth.

Next, consider the Banach space   
$$
l^1=\left\{(\xi_k)_1^{\infty}\in N:\sum_1^{\infty}|\xi_k|<\infty\right\}\quad\text{with}\quad
|(\xi_k)_1^{\infty}|=\sum_1^{\infty}|\xi_k|.
$$
Obviously,
$$
\sum_1^{\infty}|f_k(\xi_k)-f_k(\eta_k)|\le 2\sum_1^{\infty}|\xi_k-\eta_k|\quad\text{for all}\quad(\xi_k)_1^{\infty}\in l^1,
\quad(\eta_k)_1^{\infty}\in l^1,
$$
and the map $f$ from Proposition 8.1 defines a map $\hat{f}:l^1\to l^1$ which is Lipschitz continuous.

\begin{prop}
$\hat{f}$ is $C^1_{MB}$-smooth with
$$
D\hat{f}(\xi)\eta=(f'_j(\xi_j)\eta_j)_1^{\infty}\quad\text{for}\quad\xi=(\xi_j)_1^{\infty}\in l^1,\quad\eta=(\eta_j)_1^{\infty}\in l^1.
$$
\end{prop}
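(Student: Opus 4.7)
The plan is to mirror the proof of Proposition 8.1, replacing the supremum norm on $N$ with the $\ell^1$-summation norm, and exploiting the fact that sequences in $l^1$ have small tails. First, since $|f'_j(u)|\le 2$ uniformly in $j$ and $u$, the formula $A(\xi,\eta)=(f'_j(\xi_j)\eta_j)_1^\infty$ defines an element of $l^1$ with $|A(\xi,\eta)|\le 2|\eta|$, so each map $A(\xi,\cdot):l^1\to l^1$ is linear and continuous. I will prove that $D\hat f(\xi)\eta=A(\xi,\eta)$ (existence of the directional derivative) and that $l^1\times l^1\ni(\xi,\eta)\mapsto D\hat f(\xi)\eta\in l^1$ is continuous, which by Proposition 3.1 (specialized to one factor, or directly by definition) gives $C^1_{MB}$-smoothness.

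For the existence of the directional derivative, I would estimate termwise using the fundamental theorem of calculus:
$$
\bigl|h^{-1}(\hat f(\xi+h\eta)-\hat f(\xi))-A(\xi,\eta)\bigr|
=\sum_{j=1}^\infty\left|\int_0^1\bigl(f'_j(\xi_j+\theta h\eta_j)-f'_j(\xi_j)\bigr)\eta_j\,d\theta\right|.
$$
Each term is dominated by $4|\eta_j|$, which is summable since $\eta\in l^1$. Given $\epsilon>0$, I pick $J(\epsilon)\in\N$ with $\sum_{j>J(\epsilon)}4|\eta_j|<\epsilon/2$, controlling the tail uniformly in $h$. For the finite head $j\le J(\epsilon)$, continuity of the individual $f'_j$ yields $h_j>0$ such that $|f'_j(\xi_j+\theta h\eta_j)-f'_j(\xi_j)|<\epsilon/(2J(\epsilon)(|\eta|+1))$ for $|h|<h_j$ and $\theta\in[0,1]$, making the head contribution less than $\epsilon/2$.

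For the continuity of $(\xi,\eta)\mapsto D\hat f(\xi)\eta$ at $(\xi_0,\eta_0)$, I split
$$
D\hat f(\xi)\eta-D\hat f(\xi_0)\eta_0=(D\hat f(\xi)-D\hat f(\xi_0))(\eta-\eta_0)+(D\hat f(\xi)-D\hat f(\xi_0))\eta_0+D\hat f(\xi_0)(\eta-\eta_0).
$$
The first and third summands are bounded by $4|\eta-\eta_0|$ and $2|\eta-\eta_0|$ respectively. For the middle term I again use the tail trick: choose $J(\epsilon)$ with $\sum_{j>J(\epsilon)}4|\eta_{0,j}|<\epsilon/2$, and for $j\le J(\epsilon)$ exploit continuity of each $f'_j$ at $\xi_{0,j}$ to get $\delta>0$ so that $|\xi-\xi_0|<\delta$ forces each of the finitely many terms $|f'_j(\xi_j)-f'_j(\xi_{0,j})|\cdot|\eta_{0,j}|$ below $\epsilon/(2J(\epsilon))$.

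The main obstacle, as in Proposition 8.1, is the absence of equicontinuity of the family $(f'_j)$: one cannot treat the sequence of derivatives uniformly. The essential new ingredient compared to the $N$-case is that the $l^1$ norm of $\eta$ (respectively $\eta_0$) itself controls the tail $\sum_{j>J}|\eta_j|$, which is precisely what replaces the single bound $\sup_{j>J}|\eta_j|\to 0$ used for $N$. Once this bookkeeping is in place, the argument proceeds by the same head/tail split as in Proposition 8.1 and yields the stated formula for $D\hat f(\xi)\eta$.
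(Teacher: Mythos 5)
Your proposal is correct and follows essentially the same route as the paper's proof: termwise integral estimates with a head/tail split of the $\ell^1$-sum (tail controlled by summability of $\eta$, respectively $\eta_0$; head by continuity of the finitely many $f'_j$), and your three-term decomposition for joint continuity is just a repackaging of the paper's two-step split of $D\hat f(\xi)\eta-D\hat f(\xi_0)\eta_0$.
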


Before giving the proof (which is similar to the proof of Proposition 8.1) consider the composition $s\circ \hat{f}:l^1\to\R$ with the continuous linear map
$$
s:l^1\ni(\xi_j)_1^{\infty}\mapsto\sum_1^{\infty}\xi_j\in\R.
$$
The composition is $C^1_{MB}$-smooth but not $C^1_F$-smooth because we have 
$D(s\circ \hat{f})(0)=sD\hat{f}(0)=0$ and, for sequences $l^1\ni\xi_k\to0\in l^1$ and $\eta_k\in l^1$ with $|\eta_k|=1$ as in the proof of Proposition 8.2,
$$
|D(s\circ \hat{f})(\xi_k)-D(s\circ \hat{f})(0)|=|sD\hat{f}(\xi_k)|\ge|sD\hat{f}(\xi_k)\eta_k|=|f'_k(1/k)|=1
$$
for all $k\in\N$, which excludes $\beta$-continuity of $D(s\circ \hat{f})$.

\medskip

{\bf Proof} of Proposition 8.3.
1. (Directional derivatives) For $\xi=(x_j)_1^{\infty}\in l^1$ and $\eta=(y_j)_1^{\infty}\in l^1$ set $A(\xi,\eta)=(f'_j(x_j)y_j)_1^{\infty}\quad(\in l^1)$. For every real $h\neq0$ we have
\begin{eqnarray*}
|h^{-1}(\hat{f}(\xi+h\eta)-\hat{f}(\xi))-A(\xi,\eta)| & = & \sum_1^{\infty}|h^{-1}(f_j(x_j+hy_j)-f_j(x_j))-f'_j(x_j)y_j|\\
& = &  \sum_1^{\infty}|\int_0^1(f'_j(x_j+\theta hy_j)y_j-f'_j(x_j)y_j)d\theta|\\
& \le &  \sum_1^{\infty}\max_{0\le\theta\le1}|(f'_j(x_j+\theta hy_j)-f'_j(x_j))y_j|.
\end{eqnarray*} 
Let $\epsilon>0$. There exists $j(\epsilon)\in\N$ with
$$
\sum_{j(\epsilon)}^{\infty}4|y_j|\le\frac{\epsilon}{2}.
$$
For each $j\in\N$ with $j\le j(\epsilon)$ we obtain from the continuity of $f'_j$ that there exists $h_j>0$ such that for all $h\in(-h_j,h_j)$ and for all $\theta\in[0,1]$
we have
$$
|f'_j(x_j+\theta hy_j)-f'_j(x_j)||y_j|2\,j(\epsilon)<\epsilon.
$$
For reals $h$ with $|h|<\min_{j=1,\ldots,j(\epsilon)}h_j$ we obtain
$$
|h^{-1}(\hat{f}(\xi+h\eta)-\hat{f}(\xi))-A(\xi,\eta)|
\le \sum_1^{\infty}\max_{0\le\theta\le1}|(f'_j(x_j+\theta hy_j)-f'_j(x_j))y_j|
$$
$$
\le\sum_1^{j(\epsilon)}\max_{0\le\theta\le1}|(f'_j(x_j+\theta hy_j)-f'_j(x_j))y_j|
+ \sum_{j(\epsilon)+1}^{\infty}\max_{0\le\theta\le1}|(f'_j(x_j+\theta hy_j)-f'_j(x_j))y_j|
$$
$$
\le\frac{\epsilon}{2}+\sum_{j(\epsilon)+1}^{\infty}(2+2)|y_j|<\epsilon.
$$
We have shown that 
$$
D\hat{f}(\xi)\eta=\lim_{0\neq h\to0}h^{-1}(\hat{f}(\xi+h\eta)-\hat{f}(\xi))
$$
exists and equals $A(\xi,\eta)$.

\medskip

2. (Continuity of $l^1\times l^1\ni(\xi,\eta)\mapsto D\hat{f}(\xi)\eta\in l^1$) Let $\xi_0=(x_{0j})_1^{\infty}\in l^1$ and
$\eta_0=(y_{0j})_1^{\infty}\in l^1$ be given. For all $\xi=(x_j)_1^{\infty}\in l^1$ and $\eta=(y_j)_1^{\infty}\in l^1$
we have
$$
|D\hat{f}(\xi)\eta-D\hat{f}(\xi_0)\eta_0|\le|(D\hat{f}(\xi)-D\hat{f}(\xi_0))\eta|+|D\hat{f}(\xi_0)(\eta-\eta_0)|,
$$
and $|D\hat{f}(\xi_0)(\eta-\eta_0)|=\sum_1^{\infty}|f'_j(x_{0j})(y_j-y_{0j})|\le 2|\eta-\eta_0|$ while
$$
|(D\hat{f}(\xi)-D\hat{f}(\xi_0))\eta|=\sum_1^{\infty}|(f'_j(x_j)-f'_j(x_{0j}))y_j|
$$
$$
\le\sum_1^{\infty}|(f'_j(x_j)-f'_j(x_{0j}))(y_j-y_{0j})|+\sum_1^{\infty}|(f'_j(x_j)-f'_j(x_{0j}))y_{0j}|
$$
$$
\le (2+2)|\eta-\eta_0|+\sum_1^{\infty}|(f'_j(x_j)-f'_j(x_{0j}))y_{0j}|.
$$
From the preceding estimates it is obvious how to complete the proof provided we have
$$
\sum_1^{\infty}|(f'_j(x_j)-f'_j(x_{0j}))y_{0j}|\to0\quad\text{as}\quad \xi\to\xi_0.
$$
In order to prove this let $\epsilon>0$ be given. There exists $j(\epsilon)\in\N$ such that for all $\xi=(x_j)_1^{\infty}\in l^1$ we have
$$
\sum_{j(\epsilon)+1}^{\infty}|(f'_j(x_j)-f'_j(x_{0j}))y_{0j}|\le\sum_{j(\epsilon)+1}^{\infty}(2+2)|y_{0j}|<
\frac{\epsilon}{2}.
$$
For each $j\in\N$ with $1\le j\le j(\epsilon)$ there exists $\delta_j>0$ with
$$
|f'_j(x_{0j}+z)-f'_j(x_{0j})||y_{0j}|2\,j(\epsilon)<\epsilon\quad\text{for all}\quad z\in(-\delta_j,\delta_j).
$$
Let $\delta=\min_{j\in\N:1\le j\le j(\epsilon)}\delta_j$.
For every $\xi=(x_j)_1^{\infty}\in l^1$ with $|\xi-\xi_0|<\delta$ we get
$$
|x_j-x_{0j}|<\delta_j\quad\text{for all}\quad j\in\{1,\ldots,j(\epsilon)\},
$$
which yields
$$
\sum_1^{j(\epsilon)}|(f'_j(x_j)-f'_j(x_{0j}))y_{0j}|\le j(\epsilon)\frac{\epsilon}{2\,j(\epsilon)}=\frac{\epsilon}{2}.
$$
It follows that
$$
\sum_1^{\infty}|(f'_j(x_j)-f'_j(x_{0j}))y_{0j}|=\sum_1^{j(\epsilon)}|(f'_j(x_j)-f'_j(x_{0j}))y_{0j}|+
\sum_{j(\epsilon)+1}^{\infty}|(f'_j(x_j)-f'_j(x_{0j}))y_{0j}|
$$
$$
<\frac{\epsilon}{2}+\frac{\epsilon}{2}=\epsilon\quad\Box.
$$

\medskip

Now we use the $C^1_{MB}$-map $f:N\to N$ from Proposition 8.1 in order to construct $C^1_{MB}$-maps $C\to N$ and $C^1\to N$ which are not $C^1_F$-smooth, for the spaces $C$ and $C^1$ with $n=1$.  Let $C_{\ast}$ denote the closed hyperplane given by $\phi(0)=0$. Then
$$
C=C_{\ast}\oplus\R\eta
$$
where $\eta(t)=1$ for all $t\le0$. Let $P_{\ast}:C\to C$ denote the projection onto $C_{\ast}$ along $\R\eta$.
Choose a strictly increasing sequence of points $t_j<0$, $j\in\N$, with limit $0$.
Choose continuous functions $e_j:(-\infty,0]\to[0,1]\subset\R$, $j\in\N$, with $e_j(t_j)=1$ for all $j\in\N$, and with mutually disjoint supports in $(-\infty,0)$. For every $x=(x_j)_1^{\infty}\in N$ the equation
$$
Jx(t)=\sum_1^{\infty}x_je_j(t)
$$
defines a continuous function $(-\infty,0]\to\R$ with $Jx(0)=0$, and 
$$
\max_{t\le0}|Jx(t)|=\max_{j\in\N}|x_j|=|x|.
$$
The map $J:N\to C_{\ast}$ is injective, linear, and continuous. 

\medskip

(1) A $C^1_{MB}$-map $C\to N$ which is not $C^1_F$-smooth. Consider the linear evaluation map
$$
E:C_{\ast}\ni\phi\mapsto(\phi(t_j))_1^{\infty}\in N.
$$
which is continuous. The map $g=f\circ E\circ P_{\ast}$ from $C$ into $N$ is $C^1_{MB}$-smooth, due to the chain rule for $C^1_{MB}$-maps. We show that it is not $C^1_F$-smooth : Otherwise the composition $g\circ J$ from $N$ into $N$ is $C^1_F$-smooth (due  to the chain rule for $C^1_F$-maps). For each $x\in N$ we have
\begin{eqnarray*}
(g\circ J)(x) & = & f(E(P_{\ast}Jx))=f(E(Jx))\quad\text{(since}\quad Jx\in C_{I{\ast}})\\
& = & f(x)
\end{eqnarray*}
which yields a contradiction to the fact that $f$ is not $C^1_F$-smooth.

\medskip

(2) A $C^1_{MB}$-map $C^1\to N$ which is not $C^1_F$-smooth. 
Let $C^1_{\ast}\subset C^1$ denote the closed hyperplane given by $\phi'(0)=0$. 
We have
$$
C^1=C^1_{\ast}\oplus\R\,\iota
$$
with $\iota(t)=t$ for all $t\le0$. Let $P^1_{\ast}:C^1\to C^1$ denote the projection onto $C^1_{\ast}$ along 
$\R\,\iota$. The equation
$$
int(\phi)(t)=-\int_t^0\phi(s)ds
$$
defines a continuous linear map $int:C\to C^1$ with $int(C_{\ast})\subset C^1_{\ast}$. Observe that $\partial(C^1_{\ast})\subset C_{\ast}$, and $\partial(int\,\phi)=\phi$ for all $\phi\in C$.
Consider $g=f\circ  E\circ\partial\circ  P^1_{\ast}$ from $C^1$ into $N$, which is $C^1_{MB}$-smooth due to the chain rule for $C^1_{MB}$-maps. We show that $g$ is not $C^1_F$-smooth : Otherwise the composition $g\circ int\circ J$ from $N$ into $N$ is $C^1_F$-smooth as well, due to the chain rule for $C^1_F$-maps. For each $x\in N$ we have
\begin{eqnarray*}
(g\circ int\circ J)(x) & = & f(E(\partial(P^1_{\ast}(int(Jx)))))\\
& = & f(E(\partial(int(Jx))))\quad\text{(since}\quad Jx\in C_{\ast}\quad\text{and}\quad int(Jx)\in C^1_{\ast})\\
& = & f(E(Jx))=f(x)
\end{eqnarray*}
which yields a contradiction to the fact that $f$ is not $C^1_F$-smooth.

\medskip

In the same way one finds examples of maps from Banach spaces $C_{ST}$ and $C^1_{ST}$ into $N$ which are
$C^1_{MB}$-smooth but not $C^1_F$-smooth.

\medskip

\begin{remark}
The examples above with their infinite-dimensional target spaces are not related to the delay differential equation (1.1).
See \cite{W10} for the construction of maps $C\to\R^n$ and $C^1\to\R^n$ which are $C^1_{MB}$-smooth but not $C^1_F$-smooth.
\end{remark}

\newpage

\begin{center}
Part II
\end{center}

\section{Examples, and the solution manifold}

We begin with the toy example (1.2),
$$
x'(t)=h(x(t-d(x(t))))
$$
with continuously differentiable functions $h:\R\to\R$ and $d:\R\to[0,\infty)\subset\R$. For continuously differentiable functions $(-\infty,t_e)$, $0<t_e\le\infty$, which satisfy Eq. (1.2) for $0\le t<t_e$ this delay differential equation has the form (1.1) for $U=C^1$ with $n=1$ and $f=f_{h,d}$ given by
$$
f_{h,d}(\phi)=h(\phi(-d(\phi(0)))).
$$
In order to see that $f_{h,d}$ is a composition of $C^1_F$-maps all defined on open sets of Fr\'echet spaces it is convenient to introduce the odd prolongation maps $P_{odd}:C\to C_{\infty}$  (with $n=1$) and $P_{odd,1}:C^1\to C^1_{\infty}$  (with $n=1$) which are defined by the relations
$$
(P_{odd}\phi)(t)=\phi(t)\quad\text{for}\quad t\le0,\quad(P_{odd}\phi)(t)=-\phi(-t)+2\phi(0)\quad\text{for}\quad t>0,
$$
and $P_{odd,1}\phi=P_{odd}\phi$ for $\phi\in C^1$. Both maps are linear and continuous. With the evaluation map
$$
ev_{\infty,1}:C^1_{\infty}\times\R\to\R,\quad ev_{\infty,1}(\phi,t)=\phi(t),
$$
we have
$$
f_{h,d}(\phi)=h\circ ev_{\infty,1}(P_{odd,1}\phi,-d(\phi(0)))
$$
for all $\phi\in C^1$. We also need the evaluation map $ev_{\infty}:C_{\infty}\times\R\to\R$ given by $ev_{\infty}(\phi,t)=\phi(t)$.

\begin{prop}
The map $ev_{\infty}$ is continuous and the map $ev_{\infty,1}$ is $C^1_F$-smooth with
$$
D\,ev_{\infty,1}(\phi,t)(\chi,s)=D_1ev_{\infty,1}(\phi,s)\chi+D_2ev_{\infty,1}(\phi,s)t=\chi(t)+s\phi'(t)
$$
\end{prop}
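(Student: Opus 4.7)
The plan is to verify continuity of $ev_\infty$ directly, then apply Proposition~3.6 to $ev_{\infty,1}$ by identifying the two partial derivatives and establishing their $\beta$-continuity in $(\phi,t)$. For $ev_\infty$, given $(\phi,t)\in C_\infty\times\R$ and a convergent sequence $(\phi_n,t_n)\to(\phi,t)$, I would pick $j\in\N$ large enough that $t$ and a tail of the $t_n$ lie in $[-j,j]$, and split
$$
|\phi_n(t_n)-\phi(t)|\le|\phi_n-\phi|_{\infty,j}+|\phi(t_n)-\phi(t)|;
$$
the first term vanishes from the topology on $C_\infty$ and the second from continuity of $\phi$. Since $C_\infty\times\R$ is metrizable, this sequential argument suffices.

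For $ev_{\infty,1}$, the first partial derivative is immediate: $\phi\mapsto ev_{\infty,1}(\phi,t)$ is linear and continuous (it factors as $\phi\mapsto ev_\infty(\phi,t)$ through the continuous embedding $C^1_\infty\hookrightarrow C_\infty$), whence $D_1 ev_{\infty,1}(\phi,t)\chi=\chi(t)$. The second follows from the fact that, for fixed $\phi\in C^1_\infty$, the function $t\mapsto\phi(t)$ is continuously differentiable $\R\to\R$ with derivative $\phi'(t)$, so $D_2 ev_{\infty,1}(\phi,t)s=s\,\phi'(t)$; both partial derivative maps are linear and continuous in their argument. The $\beta$-continuity of $(\phi,t)\mapsto D_2 ev_{\infty,1}(\phi,t)\in L_c(\R,\R)$ reduces, via the canonical identification $L_c(\R,\R)\cong\R$, to continuity of $(\phi,t)\mapsto\phi'(t)$; but this map is $ev_\infty\circ(\partial_{1,\infty}\times\id_\R)$, a composition of the continuous differentiation map $\partial_{1,\infty}:C^1_\infty\to C_\infty$ with $ev_\infty$, hence continuous by the first step.

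The main obstacle is $\beta$-continuity of $(\phi,t)\mapsto D_1 ev_{\infty,1}(\phi,t)\in L_c(C^1_\infty,\R)$: given a bounded set $B\subset C^1_\infty$ and a sequence $(\phi_n,t_n)\to(\phi_0,t_0)$, I must show
$$
\sup_{\chi\in B}\bigl|\chi(t_n)-\chi(t_0)\bigr|\to 0.
$$
The key observation is that boundedness of $B$ in $C^1_\infty$ means that for any $j$ with $\{t_0\}\cup\{t_n:n\in\N\}\subset[-j,j]$ one has $M_j:=\sup_{\chi\in B}|\chi|_{1,\infty,j}<\infty$, which gives an equi-Lipschitz bound $|\chi'|\le M_j$ on $[-j,j]$ uniformly for $\chi\in B$; the mean value theorem then yields $|\chi(t_n)-\chi(t_0)|\le M_j|t_n-t_0|$ uniformly in $\chi\in B$, and letting $n\to\infty$ closes the argument. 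This is precisely where passing from $C_\infty$ to $C^1_\infty$ is indispensable, since $\beta$-continuity on $C_\infty\times\R$ fails in general. Proposition~3.6 then concludes that $ev_{\infty,1}$ is $C^1_F$-smooth with $D\,ev_{\infty,1}(\phi,t)(\chi,s)=\chi(t)+s\phi'(t)$.
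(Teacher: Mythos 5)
Your argument is correct, and it reaches the result by a slightly different route than the paper. The paper cites the earlier proof of \cite[Proposition 2.1]{W7} for the continuity of $ev_{\infty}$, the $C^1_{MB}$-smoothness of $ev_{\infty,1}$ and the formula for the directional derivatives, and then verifies $\beta$-continuity of the \emph{full} derivative map $(\phi,t)\mapsto D\,ev_{\infty,1}(\phi,t)\in L_c(C^1_{\infty}\times\R,\R)$ in a single estimate, bounding $|\chi(t_k)-\chi(t)+s[\phi_k'(t_k)-\phi'(t)]|$ on a bounded set $B\subset C^1_{\infty}\times\R$ by $c_j\bigl(|t_k-t|+|\phi_k-\phi|_{1,\infty,j}+|\phi'(t_k)-\phi'(t)|\bigr)$, and concluding via Proposition 3.2. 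You instead prove the continuity of $ev_{\infty}$ from scratch and organize the smoothness proof through Proposition 3.6, treating the two partial derivatives separately: for $D_1$ you use exactly the paper's key mechanism (seminorms are bounded on bounded sets, so $M_j=\sup_{\chi\in B}|\chi|_{1,\infty,j}<\infty$, and the mean value estimate gives $\sup_{\chi\in B}|\chi(t_n)-\chi(t_0)|\le M_j|t_n-t_0|$), while for $D_2$ you avoid any estimate by identifying $L_c(\R,\R)\cong\R$ and writing $(\phi,t)\mapsto\phi'(t)$ as $ev_{\infty}\circ(\partial_{1,\infty}\times\id_{\R})$. Both routes hinge on the same two ingredients (the equi-bound on bounded subsets of $C^1_{\infty}$ plus the mean value theorem); yours is self-contained and isolates cleanly which partial derivative is the delicate one, while the paper's is shorter because it leans on \cite{W7} and handles both terms in one inequality. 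One small point to make explicit: Proposition 3.6 presupposes that $ev_{\infty,1}$ itself is continuous; this is immediate from your first step together with the continuous inclusion $C^1_{\infty}\hookrightarrow C_{\infty}$, which you already invoke when computing $D_1$, but it should be stated rather than left implicit.
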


\begin{proof}
Arguing as in the proof of \cite[Proposition 2.1]{W7} one shows that $ev_{\infty}$ is continuous and that $ev_{\infty,1}$ is $C^1_{MB}$-smooth, and that the  directional derivatives satisfy the equations in the proposition. It remains to prove that the map 
$C^1_{\infty}\times\R\ni(\phi,t)\mapsto D\,ev_{\infty,1}(\phi,t)\in L_c(C^1_{\infty}\times\R,\R)$ is $\beta$-continuous. As  $C^1_{\infty}\times\R$ has countable neighbourhood bases it is enough to show that, given a sequence $C^1_{\infty}\times\R\ni(\phi_k,t_k)\to(\phi,t)\in C^1_{\infty}\times\R$ for $k\to\infty$, a neighbourhood $N$ of $0$ in $\R$ and a bounded subset $B\subset C^1_{\infty}\times\R$, we have
$$
(D\,ev_{\infty,1}(\phi_k,t_k)-D\,ev_{\infty,1}(\phi,t))B\subset N\quad\text{for}\quad k\quad\text{sufficiently large}.
$$ 
In order to prove this, choose $j\in\N$  with $|t|<j$ and  $|t_k|<j$ for all $k\in\N$. By \cite[Theorem 1.37]{R}, $c_j=\sup_{(\chi,s)\in B}(|\chi|_{1,\infty,j}
+|s|)<\infty$.
For every $k\in\N$ and $(\chi,s)\in B$,
$$
|(D\,ev_{\infty,1}(\phi_k,t_k)-D\,ev_{\infty,1}(\phi,t))(\chi,s)|=|\chi(t_k)-\chi(t)+s[\phi'_k(t_k)-\phi'(t)]|
$$
$$
\le\max_{-j\le u\le j}|\chi'(u)||t_k-t|+c_j(\max_{-j\le u\le j}|\phi'_k(u)-\phi'(u)|+|\phi'(t_k)-\phi'(t)|)
$$
$$
\le c_j(|t_k-t)|+|\phi_k-\phi|_{\infty,1,j}+|\phi'(t_k)-\phi'(t)|),
$$
and it becomes obvious how to complete the proof.
\end{proof}

The map $ev_1(\cdot,0):C^1\ni\phi\mapsto\phi(0)\in\R$ is linear and continuous, and the evaluation $ev:C\times(-\infty,0]\ni(\phi,t)\mapsto\phi(t)\in\R$ is continuous, see \cite[Proposition 2.1]{W7}.

\medskip

The next result says that $f_{h,d}$ satisfies the hypotheses for the results on semiflows and local invariant manifolds in the subsequent sections.

\begin{cor}
For $d:\R\to[0,\infty)\subset\R$ and $h:\R\to\R$ continuously differentiable the map $f_{h,d}$ is $C^1_F$-smooth and has property (e).
\end{cor}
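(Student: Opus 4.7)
The plan is to write $f_{h,d}$ as a composition of $C^1_F$-maps and then apply the chain rule (Proposition 3.5). Specifically, using the identity
$$
f_{h,d}(\phi)=h\bigl(ev_{\infty,1}(P_{odd,1}\phi,-d(\phi(0)))\bigr),
$$
I decompose $f_{h,d}$ as $h\circ ev_{\infty,1}\circ(P_{odd,1}\times(-d\circ ev_1(\cdot,0)))$. The linear maps $P_{odd,1}:C^1\to C^1_\infty$ and $ev_1(\cdot,0):C^1\to\R$ are continuous, hence $C^1_F$-smooth. The functions $h:\R\to\R$ and $d:\R\to\R$ are continuously differentiable on a finite-dimensional space and therefore $C^1_F$-smooth by Proposition~3.3. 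The evaluation $ev_{\infty,1}$ is $C^1_F$-smooth by Proposition~9.1. Using the remark following Proposition~3.5 that Cartesian products of $C^1_F$-maps are $C^1_F$-smooth, the map $\phi\mapsto(P_{odd,1}\phi,-d(\phi(0)))$ is $C^1_F$-smooth from $C^1$ into $C^1_\infty\times\R$. Two applications of the chain rule then show that $f_{h,d}:C^1\to\R$ is $C^1_F$-smooth.

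Next I compute $Df_{h,d}(\phi)\chi$ explicitly in order to verify property~(e). The chain rule and the formula for $D\,ev_{\infty,1}$ from Proposition~9.1 give
$$
Df_{h,d}(\phi)\chi=h'\bigl(\phi(-d(\phi(0)))\bigr)\bigl[(P_{odd,1}\chi)(-d(\phi(0)))-d'(\phi(0))\chi(0)\,(P_{odd,1}\phi)'(-d(\phi(0)))\bigr].
$$
Since $-d(\phi(0))\le0$, the prolongation values coincide with the original values: $(P_{odd,1}\chi)(-d(\phi(0)))=\chi(-d(\phi(0)))$ and $(P_{odd,1}\phi)'(-d(\phi(0)))=\phi'(-d(\phi(0)))$. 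Hence
$$
Df_{h,d}(\phi)\chi=h'\bigl(\phi(-d(\phi(0)))\bigr)\Bigl[\chi(-d(\phi(0)))-d'(\phi(0))\,\chi(0)\,\phi'(-d(\phi(0)))\Bigr].
$$

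The right-hand side depends on $\chi$ only through the point evaluations $\chi(-d(\phi(0)))$ and $\chi(0)$, both of which extend continuously to all of $C$. I therefore \emph{define}
$$
D_ef_{h,d}(\phi)\chi:=h'\bigl(\phi(-d(\phi(0)))\bigr)\Bigl[\chi(-d(\phi(0)))-d'(\phi(0))\,\chi(0)\,\phi'(-d(\phi(0)))\Bigr]
$$
for $\phi\in C^1$ and $\chi\in C$, which is clearly linear in $\chi$ and agrees with $Df_{h,d}(\phi)\chi$ on $C^1$. Continuity of the map $C^1\times C\ni(\phi,\chi)\mapsto D_ef_{h,d}(\phi)\chi\in\R$ follows from continuity of $h'$, $d$, $d'$, the map $\phi\mapsto\phi(0)$, and of the evaluations $ev:C\times(-\infty,0]\to\R$ and $(\phi,t)\mapsto\phi'(t)$ on $C^1\times(-\infty,0]$ (the latter being obtained by applying $ev$ to the pair $(\partial\phi,t)$). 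Hence $f_{h,d}$ has property~(e). No step presents a real obstacle; the only care needed is to verify that the argument $-d(\phi(0))$ is always nonpositive so that the odd prolongations drop out of the derivative formula.
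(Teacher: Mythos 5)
Your proposal is correct and follows essentially the same route as the paper: the identical decomposition $f_{h,d}=h\circ ev_{\infty,1}\circ(P_{odd,1}\times(-d\circ ev_1(\cdot,0)))$, the chain rule together with Propositions 3.3 and 9.1, the same derivative formula, and the same pointwise-evaluation extension $D_ef_{h,d}(\phi)$ with continuity via $ev$ and $\partial$. Your explicit observation that $-d(\phi(0))\le0$ makes the odd prolongation drop out of the derivative formula is a useful detail the paper leaves implicit, but it does not change the argument.
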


\begin{proof}
The functions $d$ and $h$ are $C^1_F$-smooth. 
The map $ev_1(\cdot,0)$  is linear and continuous, hence $C^1_F$-smooth. It follows that $P_{odd,1}\times(-d\circ ev_1(\cdot,0)):C^1\to C^1_{\infty}\times\R$ is $C^1_F$-smooth, by the chain rule (Proposition 3.5) and by $C^1_F$-smoothness of maps into product spaces. Now use that
$ev_{\infty,1}$ is $C^1_F$-smooth, due to Proposition 9.1, and apply the chain rule to the composition
$$
f_{h,d}=h\circ ev_{\infty,1}\circ(P_{odd,1}\times(-d\circ\,ev_1(\cdot,0))).
$$
It follows that $f_{h,d}$ is $C^1_F$-smooth with
$$
Df_{h,d}(\phi)\chi=h'(\phi(-d(\phi(0))))[\chi(-d(\phi(0)))+\phi'(-d(\phi(0)))\{-d'(\phi(0))\}\chi(0)].
$$
For each $\phi\in C^1$ the term on the right hand side of this equation defines a linear continuation $D_ef_{h,d}(\phi):C\to\R$ of $Df_{h,d}(\phi)$. Using that the evaluation $ev$
and differentiation $C^1\to C$ are continuous one finds that
the map
$$
C^1\times C\ni(\phi,\chi)\mapsto D_ef_{h,d}(\phi)\chi\in\R
$$
is continuous.
\end{proof}

The  pantograph equation (1.3), namely,
$$
x'(t)=a\,x(\lambda t)+b\,x(t)
$$
with constants $a\in\C$, $b\in\R$ and $0<\lambda<1$, was extensively studied in \cite{KMc}. For real parameters $a,b$ and arguments $t>0$ this is a nonautonomous linear equation with unbounded delay $\tau(t)=(1-\lambda)t>0$ since $\lambda t=t-\tau(t)$. Define $F:\R\times C^1\to\R$ by
$$
F(t,\phi)=a(P_{odd,1}\phi)(-\tau(t))+b\,\phi(0),
$$
or,
$$
F=a\,ev_{\infty,1}\circ((P_{odd,1}\circ pr_2)\times(-\tau\circ pr_1)))+b\,ev_{\infty,1}(\cdot,0)\circ P_{odd,1}\circ pr_2,
$$
with the projections $pr_j$ onto the first and second component, respectively. The map $F$ is $C^1_F$-smooth, and every continuously differentiable function $x:(-\infty,t_e)\to\R$, $0<t_e\le\infty$, which satisfies the pantograph equation for $0\le t<t_e$ also solves the nonautonomous equation
\begin{equation}
x'(t)=F(t,x_t)
\end{equation}
for $0\le t<t_e$. The role of the odd prolongation map in the definition of $F$ is to allow arguments $(t,\phi)$ with $t<0$, for which $-\tau(t)>0$. The solutions of Eq. (9.1) can be obtained from the autonomous equation (1.1) with
$n=2$ and $f:C^1\to\R^2$ given by  
$$
f_1(\phi_1,\phi_2)=1,\quad f_2(\phi_1,\phi_2)=F(\phi_1(0),\phi_2)
$$
in the familiar way: If the continuously differentiable map $x:(-\infty,t_e)\to\R$ satisfies Eq. (9.1) for $t_0\le t<t_e\le\infty$ then $(s,z):(-\infty,t_e-t_0)\to\R^2$ given by
$$
s(t)=t+t_0\quad\text{and}\quad z(t)=x(t+t_0)
$$
satisfies the system
\begin{eqnarray*}
s'(t) & = & 1=f_1(s_t,z_t)\\
z'(t) & = & F(s(t),z_t)=f_2(s_t,z_t)
\end{eqnarray*}
for $0\le t<t_e$ and $s(0)=t_0$.
The map $f$ is $C^1_F$-smooth and has the extension property (e).

\medskip

For the Volterra integro-differential equation (1.4),
$$
x'(t)=\int_0^tk(t,s)h(x(s))ds
$$
with $k:\R^{n\times n}\to\R^n$ and $h:\R^n\to\R^n$ continuously differentiable the scenario is simpler than in both cases above where delays are discrete. In \cite{W9} it is shown that every continuous function $(-\infty,t_e)\to\R^n$, $0<t_e\le\infty$, which for $0<t<t_e$  is differentiable and satisfies Eq. (1.4), also satisfies an equation of the form (9.1) for $0<t<t_e$, with the $C^1_F$-map $F=F_{k,h}$ in Eq. (9.1) defined on the space $\R\times C$. The associated autonomous equation of the form (1.1) is given by the $C^1_F$-map $f_{k,h}:C((-\infty,0],\R^{n+1})\to\R^{n+1}$ with
$$
f_{k,h}=f_1\times\hat{f},\quad f_1(\psi)=1,\quad\psi=(\psi_1,\phi),\quad \hat{f}(\psi)=F(\psi_1(0),\phi).
$$
It follows that the restriction of $f_{k,h}$ to $C^1((-\infty,0],\R^{n+1})$ is $C^1_F$-smooth and has property (e), which means that the hypotheses for the theory of Eq. (1.1) in the following sections, with a semiflow on the solution manifold in $C^1((-\infty,0],\R^{n+1})$, are satisfied.  However, in the present case we also get a nice semiflow without recourse to this theory. A result in \cite{W9} for Eq. (1.1) with a map $f:C\supset U\to\R^n$ which is $C^1_F$-smooth establishes a continuous semiflow on $U$, with all solution operators $C^1_F$-smooth. In the present case, with $f=f_{k,h}$, the semiflow yields a process of solution operators for the nonautonomous equation (9.1), all of them defined on open subsets of $C((-\infty,0],\R^n)$ and $C^1_F$-smooth.  The process incorporates all solutions of the Volterra integro-differential equation. 

\medskip

The first ingredient of the present, more general theory of Eq. (1.1) is the solution manifold
$$
X_f=\{\phi\in U:\phi'(0)=f(\phi)\}.
$$

\begin{prop}
For a $C^1_F$-map $f:C^1\supset U\to\R^n$ with property (e) and $X_f\neq\emptyset$ the set
is a $C^1_F$-submanifold of codimension $n$ in the space $C^1$, with tangent spaces
$$
T_{\phi}X_f=\{\chi\in C^1:\chi'(0)=Df(\phi)\chi\}\quad\text{for all}\quad\phi\in X_f.
$$.
\end{prop}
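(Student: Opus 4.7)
My plan is to present $X_f$ as the zero set of a $C^1_F$-map from $U$ into the finite-dimensional space $\R^n$ and then invoke Proposition 7.1 in its finite-codimension version ($\dim G = n$, $M = \{0\}$, $Dg(\phi)$ surjective), which yields both the submanifold structure and the tangent-space formula in a single step.

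First I define $g : U \to \R^n$ by
\[
g(\phi) = \phi'(0) - f(\phi).
\]
The map $\phi \mapsto \phi'(0)$ is the composition of the continuous linear differentiation map $\partial : C^1 \to C$ with the continuous linear evaluation $C \to \R^n$, $\chi \mapsto \chi(0)$, so it is continuous linear and thus $C^1_F$-smooth with constant derivative $\chi \mapsto \chi'(0)$. Combined with the hypothesis that $f$ is $C^1_F$-smooth, linearity of differentiation for $C^1_F$-maps (Section 3) gives that $g$ is $C^1_F$-smooth with $Dg(\phi)\chi = \chi'(0) - Df(\phi)\chi$.

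The main step — and the part that really uses property (e) — is to verify that $Dg(\phi) : C^1 \to \R^n$ is surjective at every $\phi \in X_f$. For this I would fix a continuously differentiable cutoff $\rho : (-\infty,0] \to \R$ with $\rho(0) = 1$ and $\rho(s) = 0$ for $s \le -1$, and for each $v \in \R^n$ and small $\epsilon > 0$ set
\[
\chi_{v,\epsilon}(s) = v\,s\,\rho(s/\epsilon).
\]
Then $\chi_{v,\epsilon} \in C^1$ with $\chi_{v,\epsilon}'(0) = v$, and $\chi_{v,\epsilon}$ is supported in $[-\epsilon,0]$ with $|\chi_{v,\epsilon}(s)| \le |v|\epsilon$, so $\chi_{v,\epsilon} \to 0$ in $C$ as $\epsilon \to 0$. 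Property (e) provides a continuous extension $D_ef(\phi) : C \to \R^n$ of $Df(\phi)$, so $Df(\phi)\chi_{v,\epsilon} = D_ef(\phi)\chi_{v,\epsilon} \to 0$, and hence $Dg(\phi)\chi_{v,\epsilon} \to v$ as $\epsilon \to 0$. Applying this to the standard basis $e_1, \ldots, e_n$ of $\R^n$ yields, for all sufficiently small $\epsilon$, vectors $Dg(\phi)\chi_{e_i,\epsilon}$ that lie arbitrarily close to $e_i$ and are therefore linearly independent, proving surjectivity of $Dg(\phi)$.

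With $g$ $C^1_F$-smooth and $Dg(\phi)$ surjective onto the $n$-dimensional space $\R^n = G$ for every $\phi \in X_f = g^{-1}(\{0\})$, the final clause of Proposition 7.1 applies (with $M = \{g(\phi)\} = \{0\}$, $T_{g(\phi)}M = \{0\}$): each $\phi \in X_f$ has an open neighbourhood $V \subset U$ in $C^1$ such that $V \cap X_f = V \cap g^{-1}(0)$ is a $C^1_F$-submanifold of codimension $n$ in $C^1$ with
\[
T_\phi X_f = T_\phi(V \cap g^{-1}(0)) = Dg(\phi)^{-1}(\{0\}) = \{\chi \in C^1 : \chi'(0) = Df(\phi)\chi\}.
\]
Since $X_f$ is locally a submanifold around each of its points, it is a $C^1_F$-submanifold of codimension $n$ in $C^1$, and the tangent-space formula holds. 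The main obstacle is the surjectivity step; everything else is bookkeeping with the chain rule and Proposition 7.1.
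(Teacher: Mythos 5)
Your proposal is correct and follows essentially the same route as the paper: both present $X_f$ as the preimage $g^{-1}(0)$ of the $C^1_F$-map $g(\phi)=\phi'(0)-f(\phi)$ and then apply Proposition 7.1 with $M=\{0\}$ and $Dg(\phi)$ surjective. The only difference is that where the paper simply cites Proposition 2.2 of \cite{W7} for the surjectivity of the derivatives, you prove it directly with the cutoff functions $\chi_{v,\epsilon}$, which are small in $C$ but have $\chi_{v,\epsilon}'(0)=v$, together with property (e); that argument is sound.
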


\begin{proof}
$X_f$ is the preimage of $0\in\R^n$ under the map $C^1_F$-map $g:C^1\supset U\ni\phi\mapsto\phi'(0)-f(\phi)\in\R^n$. \cite[Proposition 2.2]{W7} applies as $f$ also is $C^1_{MB}$-smooth. It follows that all derivatives $Dg(u)$, $u\in U$, are surjective. Apply  Proposition 7.1 to $g$ and $M=\{0\}$. 
\end{proof}

\section{Evaluation maps}

For the construction of solutions of Eq. (1.1) we need a few facts about evaluation maps. 
The {\it segment evaluation maps}
$$
E_T:C_T\times(-\infty,T]\ni(\phi,t)\mapsto \phi_t\in C,
$$
$$
E^1_T:C^1_T\times(-\infty,T]\ni(\phi,t)\mapsto \phi_t\in C^1
$$
and 
$$
E^{10}_T:C^1_T\times(-\infty,T]\ni(\phi,t)\mapsto \phi_t\in C
$$
for $T\in\R$ and  their analogues $E_{\infty},E^1_{\infty}$ for $T=\infty$ are all 
linear in the first argument.

\begin{prop}
Let $T\le\infty$.

(i) The maps $E_T$ and $E^1_T$ are continuous.

(ii) For every $\phi\in C^1_T$ the curve $\Phi:(-\infty,T)\ni t\mapsto\phi_t\in C$ is continuously differentiable, with $\Phi'(t)=E_T(\partial_T\phi,t)$.

(iii) The map $E^{10}_T|_{C^1_T\times(-\infty,T)}$ is $C^1_F$-smooth, with
\begin{eqnarray*}
D_1E^{10}_T(\phi,t)\hat{\phi} & = & E^{10}_T(\hat{\phi},t)=\hat{\phi}_t\quad\text{and}\\
D_2E^{10}_T(\phi,t)s & = & s\,E_T(\partial_T\phi,t)=s(\partial_T\phi)_t=s(\phi')_t.
\end{eqnarray*}
\end{prop}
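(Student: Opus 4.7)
For part (i), the plan is a direct triangle-inequality argument. Fix $(\phi_0, t_0) \in C_T \times (-\infty, T]$ and a seminorm $|\cdot|_j$ on $C$. For $(\phi, t)$ with $t$ near $t_0$, I would split $\phi(t+s) - \phi_0(t_0+s)$ as $[\phi(t+s) - \phi_0(t+s)] + [\phi_0(t+s) - \phi_0(t_0+s)]$ uniformly in $s \in [-j,0]$. The first summand is controlled by convergence $\phi \to \phi_0$ in a seminorm $|\cdot|_{0,T,j'}$ with $j'$ chosen to cover the argument range, and the second by uniform continuity of $\phi_0$ on a fixed compact interval containing all the arguments $t+s, t_0+s$. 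For $E^1_T$ one applies the same argument simultaneously to $\phi$ and $\phi'$, using the definition of the seminorms $|\cdot|_{1,T,j}$ on $C^1_T$ and $|\cdot|_{1,j}$ on $C^1$.

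For part (ii), fix $\phi \in C^1_T$ and $t \in (-\infty, T)$. For $|h|$ small enough that $t+h \in (-\infty, T]$, the difference quotient evaluated at $s \le 0$ equals $h^{-1}\int_0^h \phi'(t+s+\tau)\,d\tau$, which differs from $(\phi')_t(s) = \phi'(t+s)$ by the average of $\phi'(t+s+\tau) - \phi'(t+s)$ over $\tau \in [0,h]$. Uniform continuity of $\phi'$ on a compact interval containing $t + [-j-1, 1]$ makes this small uniformly in $s \in [-j,0]$ as $h \to 0$, so $\Phi'(t) = E_T(\partial_T\phi, t)$ in $C$. Continuity of $\Phi'$ is then immediate from part (i) applied to $\partial_T\phi \in C_T$. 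As a corollary, $E^{10}_T$ is continuous, being $E^1_T$ followed by the continuous inclusion $C^1 \hookrightarrow C$.

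For part (iii), I would verify the hypotheses of Proposition 3.6. Since $E^{10}_T$ is linear in the first argument, $D_1 E^{10}_T(\phi,t)\hat{\phi} = \hat{\phi}_t$ is immediate and independent of $\phi$. The partial derivative in $t$ is obtained from part (ii): $\lim_{h \to 0} h^{-1}(\phi_{t+hs} - \phi_t) = s\,\Phi'(t) = s(\phi')_t$, giving $D_2E^{10}_T(\phi,t)s = s(\phi')_t$. Both partial derivatives are plainly linear and continuous in the fibre variable. The $\beta$-continuity of the map $(\phi,t) \mapsto D_2 E^{10}_T(\phi,t) \in L_c(\R, C)$ reduces, since bounded subsets of $\R$ are bounded intervals, to continuity of $(\phi,t) \mapsto (\phi')_t \in C$, which holds as the composition $E_T \circ (\partial_T \times \id)$ of continuous maps.

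The main obstacle is $\beta$-continuity of $(\phi,t) \mapsto D_1 E^{10}_T(\phi,t) \in L_c(C^1_T, C)$. This map is independent of $\phi$, so it suffices, given $t_k \to t$ in $(-\infty, T)$, a bounded set $B \subset C^1_T$, and a basic neighbourhood $N = \{\chi \in C : |\chi|_j < 1/m\}$, to show $|\hat{\phi}_{t_k} - \hat{\phi}_t|_j < 1/m$ uniformly for $\hat{\phi} \in B$ once $k$ is large. By the mean value theorem, $|\hat{\phi}(t_k+s) - \hat{\phi}(t+s)| \le |t_k - t| \sup_{\tau \in I} |\hat{\phi}'(\tau)|$ for $s \in [-j,0]$, where $I$ is a fixed compact interval containing all arguments $t_k+s, t+s$ for $k$ large and $s \in [-j,0]$. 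Choosing $j' \in \N$ with $I \subset [T-j', T]$ (or $I \subset [-j', j']$ when $T = \infty$), boundedness of $B$ in the seminorm $|\cdot|_{1,T,j'}$ yields a uniform bound $\sup_{\hat{\phi} \in B} \max_{\tau \in I} |\hat{\phi}'(\tau)| \le M < \infty$, and hence $|\hat{\phi}_{t_k} - \hat{\phi}_t|_j \le M|t_k - t| \to 0$ uniformly over $B$. Proposition 3.6 then concludes that $E^{10}_T|_{C^1_T \times (-\infty, T)}$ is $C^1_F$-smooth with the stated partial derivatives.
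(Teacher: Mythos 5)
Your proposal is correct, and the substance of the key step coincides with the paper's: the decisive estimates are the mean value bound $|\hat{\phi}_{t_k}-\hat{\phi}_t|_j\le|t_k-t|\sup_{\hat\phi\in B}|\hat{\phi}|_{1,T,j'}$ over the bounded set $B$, plus the continuity of $(\phi,t)\mapsto(\phi')_t$, which are exactly the two terms the paper obtains after writing $(DE^{10}_T(\phi_k,t_k)-DE^{10}_T(\phi,t))(\hat\phi,\hat t)=\hat\phi_{t_k}-\hat\phi_t+\hat t[(\phi'_k)_{t_k}-(\phi')_t]$. The organizational differences are two: the paper cites the proof of Proposition 3.1 of \cite{W7} for (i), (ii) and the $C^1_{MB}$-smoothness with the stated partial derivatives, and then verifies $\beta$-continuity of the \emph{full} derivative $DE^{10}_T$ in one computation (invoking Proposition 3.2), whereas you prove (i) and (ii) from scratch and run the verification through the partial-derivative criterion of Proposition 3.6, checking $\beta$-continuity of $D_1E^{10}_T$ and $D_2E^{10}_T$ separately. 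Since the paper's single estimate is precisely the sum of your two partial estimates, the routes are equivalent in content; yours is self-contained and slightly more modular (it isolates the fact that $D_1E^{10}_T$ depends only on $t$, and reduces $D_2$ to plain continuity of $E_T\circ(\partial_T\times\mathrm{id})$ because bounded subsets of $\R$ are just bounded intervals), while the paper's is shorter by leaning on \cite{W7} and on the equivalence $C^1_F = C^1_{MB}+\beta$-continuity of $Df$. One small point to make explicit if you write this up: Proposition 3.6 is stated for continuous maps, so the continuity of $E^{10}_T$ (which you do note, via $E^1_T$ composed with the inclusion $C^1\hookrightarrow C$) must be recorded before the criterion is applied, and the sequence reduction for $\beta$-continuity is legitimate here because $C^1_T\times(-\infty,T)$ is metrizable (Remark 2.1 (iii)).
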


\begin{proof}
1. For assertions (i) and (ii), and for the fact that the map $E^{10}_T|_{C^1_T\times(-\infty,T)}$ is $C^1_{MB}$, and for the formulae for the partial derivatives in assertion (iii) see the proof of \cite[ Proposition 3.1]{W7}. It remains to show that
$DE^{10}_T$ is $\beta$-continuous. Let a sequence $(\phi_j,t_j)_1^{\infty}$ in $C^1_T\times(-\infty,T)$ be given which converges to some $(\phi,t)\in C^1_T\times(-\infty,T)$.  Let a bounded subset $B\subset C^1_T\times\R$ and a neighbourhood
$V$ of $0$ in $C$ be given. We may assume
$$
V=\{\chi\in C:|\chi|_{l}<\frac{1}{l}\}\quad\text{for some integer}\quad l>0
$$
and have to show that for $j$ sufficiently large,
$$
(DE^{10}_T(\phi_j,t_j)-DE^{10}_T(\phi,t))B\subset V.
$$
For every $(\hat{\phi},\hat{t})\in C^1_T\times\R$ and for all $j\in\N$,
$$
(DE^{10}_T(\phi_j,t_j)-DE^{10}_T(\phi,t))(\hat{\phi},\hat{t})=\hat{\phi}_{t_j}-\hat{\phi}_t+\hat{t}[(\phi_j')_{t_j}-(\phi')_t].
$$
2. As the projections from $C^1_T\times\R$ onto $C^1_T$ and onto $\R$ are continuous and linear they map the bounded set $B$
into bounded sets, and we obtain that for some real $r>0$ and for all $k\in\N$,
$$
\{\hat{t}\in\R:\text{For some}\quad\hat{\phi}\in C^1_T,(\hat{\phi},\hat{t})\in B\}\subset[-r,r]
$$
and
$$
\sigma_{1,T,k}=\sup\{|\hat{\phi}|_{1,T,k}\in\R:\text{For some}\quad\hat{t}\in\R,(\hat{\phi},\hat{t})\in B\}<\infty.
$$
3. Choose $k\in\N$ so large that for all $s\in[-l,0]$ and for all $j\in\N$,
$$
T-k<s+t_j<T\quad\text{and}\quad T-k<s+t<T.
$$
Consider $(\hat{\phi},\hat{t})\in B$. For each $j\in\N$ we have
$$
|\hat{\phi}_{t_j}-\hat{\phi}_t|_l=\max_{-l\le s\le0}|\hat{\phi}(t_j+s)-\hat{\phi}(t+s)|
$$
$$
\le\max_{T-k\le u\le T}|\hat{\phi}'(u)||t_j-t|\le\sigma_{1,T,k}|t_j-t|
$$
and
$$
|\hat{t}[(\phi_j')_{t_j}-(\phi')_t]|_l\le r[|(\phi_j')_{t_j}-(\phi')_{t_j}|_l+|(\phi')_{t_j}-(\phi')_t|_l]
$$
$$
\le r[\max_{-l\le s\le 0}|\phi_j'(s+t_j)-\phi'(s+t_j)|+\max_{-l\le s\le 0}|(\phi')(s+t_j)-(\phi')(s+t)|]
$$
$$
\le r[\max_{T-k\le u\le T}|\phi_j'(u)-\phi'(u)|+\max_{-l\le s\le 0}|(\phi')(s+t_j)-(\phi')(s+t)|].
$$
Altogether, for every $j\in\N$ and for all $(\hat{\phi},\hat{t})\in B$,
$$
|(DE^{10}_T(\phi_j,t_j)-DE^{10}_T(\phi,t))(\hat{\phi},\hat{t})|_l
$$
$$
\le\sigma_{1,T,k}|t_j-t|
+r[|\phi_j-\phi|_{1,T,k}+\max_{-l\le s\le 0}|(\phi')(s+t_j)-(\phi')(s+t)|].
$$
Using $t_j\to t$ and $|\phi_j-\phi|_{1,T,k}\to0$ as $j\to\infty$ and the uniform continuity of $\phi'$ on $[T-k,T]$ one finds
$J\in\N$ such that for all integers $j\ge J$ and for all  $(\hat{\phi},\hat{t})\in B$,
$$
|(DE^{10}_T(\phi_j,t_j)-DE^{10}_T(\phi,t))(\hat{\phi},\hat{t})|_l<\frac{1}{l}
$$
It follows that $(DE^{10}_T(\phi_j,t_j)-DE^{10}_T(\phi,t))B\subset V$ for all integers $j\ge J$.
\end{proof}

\section{The fixed point problem, and a substitution operator}

In the sequel we always assume that $U\subset C^1$ is open and that $f:U\to\R^n$ is $C^1_F$-smooth and has the property (e). 

\medskip

Following \cite{W7} we rewrite the initial value problem 
\begin{equation}
x'(t)=f(x_t)\quad\text{for}\quad t\ge0,\quad x_0=\phi\in X_f
\end{equation}
as a fixed point equation: Suppose $x:(-\infty,T]\to\R^n$, $T>0$, is a solution of Eq. (1.1) on $[0,T]$ with $x_0=\phi$. Extend $\phi$ by $\phi(t)=\phi(0)+t\phi'(0)$ to a continuously differentiable function $\hat{\phi}:(-\infty,T]\to\R^n$. Then $y=x-\hat{\phi}$ satisfies $y(t)=0$ for $t\le0$, the curve $(-\infty,T]\ni s\mapsto x_s\in C^1$ is continuous (use $x_s=E^1_T(x,s)$ and apply Proposition 10.1 (i)), as well as the curves  $(-\infty,T]\ni s\mapsto y_s\in C^1$ and $(-\infty,T]\ni s\mapsto \hat{\phi}_s\in C^1$. For $0\le t\le T$ we get
\begin{eqnarray*}
y(t) & = & x(t)-\hat{\phi}(t)=x(0)+\int_0^tf(x_s)ds-\phi(0)-t\phi'(0)\\
& = & \int_0^tf(y_s+\hat{\phi}_s)ds-t\,f(\phi)\\
& = & \int_0^tf(y_s+\hat{\phi}_s)-f(\phi))ds.
\end{eqnarray*}holds
Obviously, $y(0)=0=y'(0)$. So $\eta=y|_ {[0,T]}\in C^1_{0T,0}$ satisfies the fixed point equation
\begin{equation}
\eta(t)=\int_0^t(f(\hat{\eta}_s+\hat{\phi}_s)-f(\phi))ds,\quad 0\le t\le T,
\end{equation}
where $\hat{\eta}\in C^1_T$ is the prolongation of $\eta$ given by $\hat{\eta}(t)=0$ for all $t<0$.
In order to find a solution of the initial value problem (11.1) one solves the fixed point equation (11.2) by means of a parametrized contraction on a subset of the Banach space $C^1_{0T,0}$ with the parameter $\phi\in U$ in the Fr\'echet space $C^1$. For $\phi\in X_f$ the associated fixed point $\eta=\eta_{\phi}$ yields a solution $x=\hat{\eta}+\hat{\phi}$ of the initial value probem (11.1).

\medskip

The application of a suitable contraction mapping theorem, namely, Theorem 5.2, requires some preparation. We begin with the substitution operator
$$
F_T:dom_T\to C_{0T}
$$
which for $0<T<\infty$ is given by
$$
dom_T=\{\phi\in C^1_T:\text{For}\,\,0\le s\le T,\phi_s\in U\}
$$
and
$$
F_T(\phi)(t)=f(\phi_t)=f(E^1_T(\phi,t))\in\R^n.
$$
\cite[Proposition 3.2]{W7} guarantees that for $0<T<\infty$ the domain $dom_T$ is open and that $F_T$ is a $C^1_{MB}$-map with
$$
(DF_T(\phi)\hat{\phi})(s)=D_ef(E^1_T(\phi,s))E^{10}_T(\hat{\phi},s).
$$
(Notice that  in order to obtain that $F_T$ is $C^1_{MB}$ the chain rule can not be applied, due to lack of smoothness of the map $E^1_T$.)

\begin{prop}
The map $F_T$, $0<T<\infty$,  is $C^1_F$-smooth.
\end{prop}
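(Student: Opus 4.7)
The plan is to reduce everything to $\beta$-continuity of the derivative and then exploit property (e) through a contradiction/subsequence argument, with Arzel\`a--Ascoli providing the compactness that lets joint continuity of $D_ef$ do the work.

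By Proposition 3.2, since $F_T$ is already $C^1_{MB}$-smooth with derivative
$$(DF_T(\phi)\hat\phi)(s)=D_ef(\phi_s)\hat\phi_s,$$
it suffices to verify $\beta$-continuity of $DF_T:dom_T\to L_c(C^1_T,C_{0T})$. Since $C^1_T$ is metrizable I would check sequential $\beta$-continuity: given $dom_T\ni\phi_k\to\phi$, a bounded set $B\subset C^1_T$, and $\epsilon>0$, I want
$$\sup_{\hat\phi\in B,\,s\in[0,T]}\bigl|(D_ef(\phi_{k,s})-D_ef(\phi_s))\hat\phi_s\bigr|<\epsilon\quad\text{for $k$ large.}$$

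Next I would assemble three ingredients. (a) Uniform segment convergence: $\phi_{k,s}\to\phi_s$ in $C^1$ uniformly in $s\in[0,T]$, because for each $j$ the estimate $|\phi_{k,s}-\phi_s|_{1,j}\le|\phi_k-\phi|_{1,T,j+T}$ is independent of $s$. (b) The curve $s\mapsto\phi_s\in C^1$ is continuous by Proposition 10.1(i), so $K:=\{\phi_s:s\in[0,T]\}$ is compact in $C^1$ and contained in $U$. (c) The segment set $B_C:=\{\hat\phi_s:\hat\phi\in B,\,s\in[0,T]\}$ is relatively compact in $C$: on each interval $[-j,0]$ the family is uniformly bounded by $\sup_B|\hat\phi|_{1,T,j+T}$ and equi-Lipschitz with the same constant (using the bound on $\hat\phi'$ supplied by the $C^1_T$-seminorm), so Arzel\`a--Ascoli on each $[-j,0]$ together with a diagonal argument gives relative compactness in $C$.

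The conclusion would then follow by contradiction. If the supremum does not tend to $0$, after extracting subsequences there exist $s_k\to s^\ast\in[0,T]$ and $\hat\phi_k\in B$ with $|(D_ef(\phi_{k,s_k})-D_ef(\phi_{s_k}))\hat\phi_{k,s_k}|\ge\epsilon_0>0$. By (c), passing to a further subsequence, $\hat\phi_{k,s_k}\to\chi^\ast$ in $C$. By (a) and (b) combined, both $\phi_{k,s_k}\to\phi_{s^\ast}$ and $\phi_{s_k}\to\phi_{s^\ast}$ in $C^1$. Property (e) then says $U\times C\ni(u,\chi)\mapsto D_ef(u)\chi$ is jointly continuous, so both terms converge to $D_ef(\phi_{s^\ast})\chi^\ast$ and the difference tends to $0$, contradicting $\epsilon_0>0$.

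The point of the argument, and where a little care is needed, is ingredient (c): a bounded set in the Fr\'echet space $C^1_T$ is in general \emph{not} relatively compact in $C^1_T$ (one lacks any bound on second derivatives), but the bound on first derivatives is already enough to give equicontinuity of the \emph{segments} in the weaker space $C$, which is precisely where property (e) evaluates its second argument. Everything else is then a routine exploitation of the joint continuity of $D_ef$.
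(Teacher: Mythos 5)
Your argument is correct, but it is genuinely different from the paper's. The paper never uses compactness of the segment set: it defines $B_T=\{\hat\phi_s: \hat\phi\in B,\ s\in[0,T]\}$, checks only that this is a \emph{bounded} subset of $C^1$, rewrites $(DF_T(\psi)-DF_T(\phi))\hat\phi(s)=(Df(\psi_s)-Df(\phi_s))\hat\phi_s$ (replacing $D_ef$ by $Df$, which is legitimate because segments of $C^1_T$-functions lie in $C^1$), and then exploits the $C^1_F$-smoothness of $f$: the map $(\psi,s)\mapsto Df(E^1_T(\psi,s))\in L_c(C^1,\R^n)$ is $\beta$-continuous, hence uniformly $\beta$-continuous on the compact set $\{\phi\}\times[0,T]$ by Proposition 1.2, which yields directly a neighbourhood $N$ of $\phi$ with $Df(\psi_s)-Df(\phi_s)\in U_{U_\epsilon(0),B_T}$ for all $\psi\in N$, $s\in[0,T]$ --- no sequences, no subsequence extraction. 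You instead keep $D_ef$, use property (e) (joint continuity of $D_ef$ on $U\times C$), establish via Arzel\`a--Ascoli and a diagonal argument that the segment set is relatively compact in the weaker space $C$, and close with a contradiction argument; note you never invoke the $\beta$-continuity of $Df$ at all. What each approach buys: the paper's proof is shorter and works uniformly on a bounded set, using the full strength of the hypothesis that $f$ is $C^1_F$-smooth; yours shows the slightly stronger statement that $C^1_{MB}$-smoothness of $f$ together with (e) already forces $F_T$ to be $C^1_F$-smooth (the compactness of segments in $C$ substitutes for $\beta$-continuity of $Df$), at the price of the compactness machinery and an indirect argument. Two cosmetic points: in the negation of the supremum statement you should pick witnesses achieving at least $\epsilon_0/2$ (the sup need not be attained), and the seminorm index in $|\phi_k-\phi|_{1,T,j+T}$ should be an integer, e.g.\ $\lceil j+T\rceil$; neither affects the argument.
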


\begin{proof}
1. Let $\phi\in dom_T\subset C^1_T$, $\epsilon>0$, and a bounded set $B\subset C^1_T$ be given. Using the norm on $C_{0T}$ we have to find a neighbourhood $N$ of $\phi$ in $C^1_T$ so that for every $\psi\in N$ and for all $\hat{\phi}\in B$,
$$
\max_{0\le s\le T}|((DF_T(\psi)-DF_T(\phi))\hat{\phi})(s)|<\epsilon.
$$
Define
$$
B_T=\{E^1_T(\hat{\phi},s)\in C^1:0\le s\le T,\hat{\phi}\in B\}
$$
Claim: $B_T\subset C^1$ is bounded.

\medskip

Proof: Consider a seminorm $|\cdot|_{1,j}$, $j\in\N$. Choose an integer $k\ge j+T$. The seminorm $|\cdot|_{1,T,k}$ is bounded on $B$. For every $\hat{\phi}\in B$ and for all $s\in[0,T]$ we see from
\begin{eqnarray*}
|E^1_T(\hat{\phi},s)|_{1,j} & = & \max_{-j\le u\le0}|\hat{\phi}(s+u)|+\max_{-j\le u\le0}|\hat{\phi}'(s+u)|\\
& \le &  \max_{-j\le w\le T}|\hat{\phi}(w)|+\max_{-j\le w\le T}|\hat{\phi}'(w)|\le|\hat{\phi}|_{1,T,k}
\end{eqnarray*}
that $|\cdot|_{1,j}$ is bounded on $B_T$.

\medskip

2. For every $\psi\in dom_T$, $\hat{\phi}\in B$, $s\in[0,T]$ we have
\begin{eqnarray*}
((DF_T(\psi)-DF_T(\phi))\hat{\phi})(s) & = & (D_ef(E^1_T(\psi,s))-D_ef(E^1_T(\phi,s))E^{10}_T(\hat{\phi},s)\\
& & \text{(see \cite[Proposition 3.2]{W7})}\\
& = & (Df(E^1_T(\psi,s))-Df(E^1_T(\phi,s))E^1_T(\hat{\phi},s)\\
& & \text{(with}\quad E^{10}_T(\hat{\phi},s)\in C^1),
\end{eqnarray*}
where $E^1_T(\hat{\phi},s)$ is in the bounded set $B_T$. As $f$ is $C^1_F$-smooth the composition
$$
C^1_T\times\R\supset dom_T\times[0,T]\ni(\psi,s)\mapsto Df(E^1_T(\psi,s))\in L_c(C^1,\R^n)
$$
is $\beta$-continuous, hence uniformly $\beta$-continuous on the compact set $\{\phi\}\times[0,T]$ (see Proposition 1.2). It follows that there is a neighbourhood $N$ of $\phi$ in $C^1_T$ such that for every $\psi\in N$ and for all $s\in[0,T]$ the difference
$$
Df(E^1_T(\psi,s))-Df(E^1_T(\phi,s)
$$
is contained in the neighbourhood $U_{U_{\epsilon}(0),B_T}$ of $0$ in $L_c(C^1,\R^n)$, with
$U_{\epsilon}(0)=\{x\in\R^n:|x|<\epsilon\}$. Finally, we obtain for each $\psi\in N$, $s\in[0,T]$, $\hat{\phi}\in B$,
\begin{eqnarray*}
|((DF_T(\psi)-DF_T(\phi))\hat{\phi})(s)| & = & |(Df(E^1_T(\psi,s))-Df(E^1_T(\phi,s))E^1_T(\hat{\phi},s)|\\
& < & \epsilon.
\end{eqnarray*}
\end{proof}

The {\it prolongation maps}
$$
P_T:C^1\to C^1_T,\quad 0<T\le\infty,
$$
given by
$$
P_T\phi(t)=\phi(t)\quad\text{for}\quad t\le0,\quad P_T\phi(t)=\phi(0)+t\phi'(0)\quad\text{for}\quad 0<t\le T,
$$
$$
P_{ST}:C^1_{0S}\to C^1_{0T},\quad 0<S<T<\infty,
$$
given by
$$
P_{ST}\phi(t)=\phi(t)\quad\text{for}\quad 0\le t\le S,\quad P_{ST}\phi(t)=\phi(S)+(t-S)\phi'(S)\quad\text{for}\quad S<t\le T,
$$
$$
Z_T:C_{0T,0}\to C_T,\quad 0<T<\infty
$$
given by
$$
Z_T\phi(t)=\phi(t)\quad\text{for}\quad 0\le t\le T,\quad Z_T(\phi)(t)=0\quad\text{for}\quad t<0,
$$
and the integration operators
$$
I_T:C_{0T,0}\to C^1_{0T,0},\quad 0<T<\infty,\quad\text{given by}\quad I_T\phi(t)=\int_0^t\phi(s)ds
$$
are all linear and continuous. We have $Z_TC^1_{0T,0}\subset C^1_T$, and the induced map $C^1_{0T,0}\stackrel{Z_T}{\to} C^1_T$ is continuous, too. For $P_{ST}$, $0<S<T$, 
$$
P_{ST}C^1_{0S,0}\subset C^1_{0T,0},
$$
and
\begin{equation*}
|P_{ST}\phi|_{1,0T}\le(2+T)|\phi|_{1,0S}\quad\text{for all}\quad\phi\in C^1_{0S}
\end{equation*}
because of the estimate
\begin{eqnarray*}
|P_{ST}\phi|_{1,0T} & = & \max_{0\le t\le T}|P_{ST}\phi(t)|+\max_{0\le t\le T}|(P_{ST}\phi)'(t)|\\
& \le & \max_{0\le t\le S}|\phi(t)|+|\phi(S)|+|\phi'(S)|T+ \max_{0\le t\le S}|\phi'(t)|.
\end{eqnarray*}

It follows that for every $T>0$ the set
$$
D_T=\{(\phi,\eta)\in U\times C^1_{0T,0}: P_T\phi+Z_T\eta\in dom_T\}
$$
is open. Let $pr_1$ and $pr_2$ denote the projections from $C^1\times C^1_{0T,0}$ onto the first and second factor, respectively. Define $\tau:\R^n\mapsto C_{0T}$ by $\tau(\xi)(t)=\xi$. Both projections and $\tau$ are continuous linear maps. Using Proposition 11.1, the chain rule, and linearity of differentiation we  infer that
the map
$$
G_T:C^1\times C^1_{0T,0}\supset U\times C^1_{0T,0}\supset D_T\to C_{0T,0}\subset C_{0T}
$$
given by
$$
G_T(\phi,\eta)=F_T( P_Tpr_1(\phi,\eta)+Z_Tpr_2(\phi,\eta))-\tau\circ f\circ pr_1(\phi,\eta)
$$
$$
\text{(notice that}\quad G_T(\phi,\eta)(0)=f((P_T\phi+Z_T\eta)_0)-f(\phi)=f(\phi+0)-f(\phi)=0)
$$
is $C^1_F$-smooth. For the derivatives we obtain the following result.

\begin{cor}
Let $0<T<\infty$. For $(\phi,\eta)\in D_T$  and $\hat{\phi}\in C^1,\hat{\eta}\in C^1_{0T,0}$,
$$
DG_T(\phi,\eta)(\hat{\phi},\hat{\eta})=DF_T(P_T\phi+Z_T\eta)(P_T\hat{\phi}+Z_T\hat{\eta})-\tau(Df(\phi)\hat{\phi}),
$$
and for $0\le t\le T$,
\begin{eqnarray*}
DG_T(\phi,\eta)(\hat{\phi},\hat{\eta})(t) & = & (D_ef(E^1_T(P_T\phi+Z_T\eta,t)E^{10}_T(P_T\hat{\phi}+Z_T\hat{\eta},t)\\
& & -\tau(Df(\phi)\hat{\phi})(t)\\
& = & D_ef((P_T\phi)_t+(Z_T\eta)_t)((P_T\hat{\phi})_t+(Z_T\hat{\eta})_t)\\
& & -Df(\phi)\hat{\phi}.
\end{eqnarray*}
\end{cor}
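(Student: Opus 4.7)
The plan is to obtain both the abstract formula and its pointwise version by a direct application of the chain rule for $C^1_F$-maps (Proposition 3.5), exploiting the fact that every map involved in the construction of $G_T$ other than $F_T$ and $f$ is linear and continuous, together with the already recorded formula for $DF_T$.

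First I would rewrite $G_T$ as the difference $G_T = F_T\circ H - \tau\circ f\circ pr_1$, where $H:C^1\times C^1_{0T,0}\to C^1_T$ is the continuous linear map $H(\phi,\eta)=P_T\phi+Z_T\eta$. Since $P_T, Z_T, pr_1, pr_2, \tau$ are all continuous linear maps between Fr\'echet (Banach) spaces they are $C^1_F$-smooth with constant derivative equal to themselves, so $DH(\phi,\eta)(\hat\phi,\hat\eta)=P_T\hat\phi+Z_T\hat\eta$ and $D(\tau\circ f\circ pr_1)(\phi,\eta)(\hat\phi,\hat\eta)=\tau(Df(\phi)\hat\phi)$. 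Applying the chain rule (Proposition 3.5) to $F_T\circ H$ and linearity of differentiation yields at once
$$
DG_T(\phi,\eta)(\hat\phi,\hat\eta)=DF_T(P_T\phi+Z_T\eta)(P_T\hat\phi+Z_T\hat\eta)-\tau(Df(\phi)\hat\phi),
$$
which is the first assertion.

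Next I would derive the pointwise identity by evaluating at $t\in[0,T]$. By the formula for $DF_T$ recalled from \cite[Proposition 3.2]{W7},
$$
\bigl(DF_T(P_T\phi+Z_T\eta)(P_T\hat\phi+Z_T\hat\eta)\bigr)(t)=D_ef\bigl(E^1_T(P_T\phi+Z_T\eta,t)\bigr)E^{10}_T(P_T\hat\phi+Z_T\hat\eta,t).
$$
Since $E^1_T$ and $E^{10}_T$ are linear in the first argument one has $E^1_T(P_T\phi+Z_T\eta,t)=(P_T\phi)_t+(Z_T\eta)_t$ and likewise for the hat versions; moreover $\tau(Df(\phi)\hat\phi)(t)=Df(\phi)\hat\phi$ for every $t$ because $\tau$ produces the constant function. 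Substituting and using that the restriction of $D_ef$ to $C^1$ agrees with $Df$ (so that $D_ef((P_T\phi)_t+(Z_T\eta)_t)$ may be evaluated on the $C^1$-segment $(P_T\hat\phi)_t+(Z_T\hat\eta)_t$) produces the two successive equalities stated in the corollary.

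There is no serious obstacle here: once $G_T$ is decomposed as above, everything reduces to the chain rule plus the linearity of $P_T$, $Z_T$, $pr_1$, $pr_2$, $\tau$ and of the segment maps in their first argument. The $C^1_F$-smoothness of $G_T$ needed to invoke the chain rule was already established in the paragraph preceding the corollary (via Proposition 11.1), so this statement is in effect a bookkeeping consequence of material already in place.
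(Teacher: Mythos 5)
Your proposal is correct and follows essentially the same route as the paper: the paper defines $G_T$ exactly as the composition $F_T\circ(P_T\,pr_1+Z_T\,pr_2)-\tau\circ f\circ pr_1$ and obtains the corollary from the chain rule for $C^1_F$-maps, linearity of differentiation, the linearity and continuity of $P_T,Z_T,pr_1,pr_2,\tau$, and the formula $(DF_T(\psi)\hat\psi)(t)=D_ef(E^1_T(\psi,t))E^{10}_T(\hat\psi,t)$, just as you do. (Your aside about restricting $D_ef$ to $C^1$ is unnecessary, since the corollary keeps $D_ef$ and that map is defined on all of $C$, but it does no harm.)
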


The map $A_T=I_T\circ G_T$ is $C^1_F$-smooth. We now restate \cite[Proposition 3.4]{W7}, which prepares the proof that $A_T$ with $T>0$ sufficiently small defines a uniform contraction on a small ball in $C^1_{0T,0}$.

\begin{prop}
Let $\phi\in U$ be given. There exist $T=T_{\phi}>0$, a neighbourhood $V=V_{\phi}$ of $\phi$ in $U$, $\epsilon=\epsilon_{\phi}>0$, and $j=j_{\phi}\in\N$ such that for all $S\in(0,T)$, $\chi\in V$, $\eta$ and $\tilde{\eta}$ in $C^1_{0S,0}$ with $|\eta|_{1,0S}<\epsilon$ and $|\tilde{\eta}|_{1,0S}<\epsilon$, $w\in[0,S]$, and $\theta\in[0,1]$, 
\begin{equation*}
(P_S\chi)_w+(Z_S\eta)_w+\theta[(Z_S\tilde{\eta})_w-(Z_S\eta)_w]\in U
\end{equation*}
and
$$
|D_ef((P_S\chi)_w+(Z_S\eta)_w+\theta[(Z_S\tilde{\eta})_w-(Z_S\eta)_w])[(Z_S\tilde{\eta})_w-(Z_S\eta)_w)]|$$
$$
\le 2j\,|\tilde{\eta}-\eta|_{0S}.
$$
\end{prop}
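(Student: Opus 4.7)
The plan is to exploit property (e) of $f$ to obtain a local Lipschitz estimate for $D_ef$ near $\phi$, to bound the $C$-seminorm $|(Z_S\psi)_w|_j$ of zero-extended segments directly in terms of the sup norm $|\psi|_{0S}$, and finally to make $T,V,\epsilon$ small enough that every intermediate point at which $D_ef$ is evaluated remains in the Lipschitz neighborhood.

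\emph{Step 1 (local Lipschitz bound).} By property (e), the map $U\times C\ni(\psi,\chi)\mapsto D_ef(\psi)\chi\in\R^n$ is continuous at $(\phi,0)$. Taking the open unit ball in $\R^n$ as a target neighborhood, I extract an open neighborhood $V_0\subset U$ of $\phi$ together with integers $k,m\in\N$ such that $|D_ef(\psi)\chi|<1$ whenever $\psi\in V_0$ and $|\chi|_k<1/m$. Setting $j=j_\phi=\max(k,m)$ and using monotonicity of $|\cdot|_l$ in $l$, this strengthens to $|D_ef(\psi)\chi|<1$ whenever $|\chi|_j<1/j$. By linearity of $D_ef(\psi)$ in $\chi$, rescaling $\chi\mapsto\chi/(2j|\chi|_j)$ gives
\[
|D_ef(\psi)\chi|\le 2j\,|\chi|_j\quad\text{for all }\psi\in V_0,\ \chi\in C.
\]

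\emph{Step 2 (segment seminorm estimate).} For $S\in(0,j]$, $\psi\in C_{0S,0}$, $w\in[0,S]$, the definition of $Z_S$ gives $(Z_S\psi)_w(t)=\psi(t+w)$ for $-w\le t\le 0$ and $(Z_S\psi)_w(t)=0$ for $t<-w$. Since $j\ge S\ge w$, the support is contained in $[-j,0]$ and
\[
|(Z_S\psi)_w|_j=\max_{-j\le t\le 0}|(Z_S\psi)(t+w)|\le\max_{0\le s\le S}|\psi(s)|=|\psi|_{0S}.
\]
Applied to $\psi=\tilde\eta-\eta$ (which also lies in $C_{0S,0}$ by linearity of $Z_S$), this yields $|(Z_S\tilde\eta)_w-(Z_S\eta)_w|_j\le|\tilde\eta-\eta|_{0S}$.

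\emph{Step 3 (keeping the argument in $V_0$).} I would choose $T\in(0,j]$, a neighborhood $V\subset V_0$ of $\phi$, and $\epsilon>0$ such that for every admissible $(S,\chi,\eta,\tilde\eta,w,\theta)$ the point
\[
\sigma=(P_S\chi)_w+(Z_S\eta)_w+\theta\bigl[(Z_S\tilde\eta)_w-(Z_S\eta)_w\bigr]
\]
lies in $V_0$. Three ingredients drive this: continuity of the linear map $P_S:C^1\to C^1_S$ and of segment evaluation $E^1_S$ (Proposition 10.1(i)) make $(\chi,w)\mapsto(P_S\chi)_w\in C^1$ jointly continuous with value $\phi$ at $(\phi,0)$; continuity of $Z_S:C^1_{0S,0}\to C^1_S$ and of $E^1_S$ make $(\eta,w)\mapsto(Z_S\eta)_w$ continuous with value $0$ at $\eta=0$; and Proposition 1.2 applied to these continuous maps on the compact parameter set $\{\phi\}\times[0,T]$ (resp.\ $\{0\}\times[0,T]$) turns these into uniform statements in $w\in[0,S]$. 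Applying Step 1 to $\psi=\sigma\in V_0$ and $\chi=(Z_S\tilde\eta)_w-(Z_S\eta)_w$, and then Step 2, yields the required bound.

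The main obstacle is the uniformity in Step 3: one must control $|(P_S\chi)_w-\phi|_{1,j}$ uniformly in $w\in[0,S]$, even though for $w>0$ the segment $(P_S\chi)_w$ reads values of $P_S\chi$ on $(0,w]$ where $P_S\chi$ is the affine extension $\chi(0)+s\chi'(0)$. The clean way is to split $(P_S\chi)_w-\phi=(P_S(\chi-\phi))_w+[(P_S\phi)_w-\phi]$: the first summand is handled by continuity of $P_S$ followed by $E^1_S$, and the second by the fact that the curve $[0,S]\ni w\mapsto(P_S\phi)_w\in C^1$ is continuous and equals $\phi$ at $w=0$ (since $P_S\phi$ restricted to $(-\infty,0]$ is $\phi$). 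Everything else is routine bookkeeping.
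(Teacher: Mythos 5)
Your proposal is correct, and it is essentially the argument the paper points to (the paper gives no proof here beyond citing \cite[Proposition 3.4]{W7}): property (e) plus the canonical neighbourhood base $\{\chi\in C:|\chi|_j<1/j\}$ yields, after rescaling, the uniform bound $|D_ef(\psi)\chi|\le 2j|\chi|_j$ for $\psi$ near $\phi$, the zero-extended segments satisfy $|(Z_S\psi)_w|_j\le|\psi|_{0S}$, and smallness of $T$, $V$, $\epsilon$ keeps the evaluation point in that neighbourhood. The only point worth making explicit in Step 3 is that $(P_S\chi)_w$ does not depend on $S$ as long as $w\le S$ (it coincides with $(P_T\chi)_w$), so the fixed-$S$ continuity statements you invoke (via Proposition 1.2 and the continuity of $w\mapsto(P_T\phi)_w$ at $w=0$) automatically give the required uniformity over all $S\in(0,T)$, and the degenerate case $|\chi|_j=0$ in the rescaling is trivial since then $D_ef(\psi)\chi=0$ by homogeneity.
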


\begin{proof}
See the proof of \cite[Proposition 3.4]{W7}
\end{proof}

Let $\phi\in U$, and let $T=T_{\phi}>0$, a convex neighbourhood $V=V_{\phi}$ of $\phi$ in $U$, $\epsilon=\epsilon_{\phi}>0$, and $j=j_{\phi}\in\N$ be given as in Proposition  11.3.

Then Propositions 4.1, 4.2, 4.3 from \cite{W7} hold, with verbatim the same proofs. We restate these propositions as follows.

\begin{prop}
For every $S\in(0,T)$, $\chi\in V$, $\eta$ and $\tilde{\eta}$ in $C^1_{0S,0}$ with $|\eta|_{1,0S}<\epsilon$ and
$|\tilde{\eta}|_{1,0S}<\epsilon$,
$$(\chi,\eta)\in D_S,\,(\chi,\tilde{\eta})\in D_S,\quad\text{and}\quad
|A_S(\chi,\tilde{\eta})-A_S(\chi,\eta)|_{1,0S}\le 2jS(S+1)|\tilde{\eta}-\eta|_{1,0S}.
$$
\end{prop}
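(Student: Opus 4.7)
The assertion that $(\chi,\eta)$ and $(\chi,\tilde{\eta})$ lie in $D_S$ is immediate: specialising Proposition 11.3 to the pair $(\eta,\eta)$ and any $\theta$ gives $(P_S\chi)_w + (Z_S\eta)_w \in U$ for every $w\in[0,S]$, and since segment formation is linear this means $(P_S\chi+Z_S\eta)_w\in U$, hence $P_S\chi+Z_S\eta\in\mathrm{dom}_S$. The same argument with $\eta$ replaced by $\tilde{\eta}$ handles $(\chi,\tilde{\eta})$.

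For the Lipschitz estimate, I would unwind $A_S=I_S\circ G_S$ and reduce to a pointwise bound on $G_S(\chi,\tilde{\eta})(s)-G_S(\chi,\eta)(s)=f((P_S\chi+Z_S\tilde{\eta})_s)-f((P_S\chi+Z_S\eta)_s)$. Since $f$ is $C^1_F$-smooth it is in particular $C^1_{MB}$-smooth, so the integral formula (3.1) applies along the segment joining $u=(P_S\chi)_s+(Z_S\eta)_s$ to $v=(P_S\chi)_s+(Z_S\tilde{\eta})_s$ in $C^1$; the hypothesis in Proposition 11.3 guarantees this segment lies in $U$. Writing $v-u=(Z_S\tilde{\eta})_s-(Z_S\eta)_s$, which belongs to $C^1\subset C$, one has $Df(\cdot)(v-u)=D_ef(\cdot)(v-u)$ by property (e), and the pointwise estimate in Proposition 11.3 yields
\begin{equation*}
|G_S(\chi,\tilde{\eta})(s)-G_S(\chi,\eta)(s)|\le 2j\,|\tilde{\eta}-\eta|_{0S}
\end{equation*}
for every $s\in[0,S]$.

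Applying $I_S$ then bounds $|A_S(\chi,\tilde{\eta})(t)-A_S(\chi,\eta)(t)|\le 2jS\,|\tilde{\eta}-\eta|_{0S}$, while the derivative equals $G_S(\chi,\tilde{\eta})-G_S(\chi,\eta)$ and is bounded by $2j\,|\tilde{\eta}-\eta|_{0S}$ on $[0,S]$. Summing the two suprema gives
\begin{equation*}
|A_S(\chi,\tilde{\eta})-A_S(\chi,\eta)|_{1,0S}\le 2j(S+1)\,|\tilde{\eta}-\eta|_{0S}.
\end{equation*}
The final step, which upgrades the $C$-norm to the $C^1$-norm with the crucial extra factor $S$, uses that $\eta,\tilde{\eta}\in C^1_{0S,0}$ both vanish at $0$: the fundamental theorem of calculus gives $|\tilde{\eta}(t)-\eta(t)|\le t\,\max_{[0,S]}|\tilde{\eta}'-\eta'|\le S\,|\tilde{\eta}-\eta|_{1,0S}$, hence $|\tilde{\eta}-\eta|_{0S}\le S|\tilde{\eta}-\eta|_{1,0S}$, and substituting this in yields the desired factor $2jS(S+1)$.

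There is no real obstacle; everything is prepared by Proposition 11.3 and the calculus results of Part I. The only point to be careful about is invoking the extension $D_ef$ rather than $Df$ inside the integral representation, since in subsequent use of this setup $(Z_S\tilde{\eta})_s-(Z_S\eta)_s$ need only be estimated in the weaker $C$-norm; here both coincide because the increment still lies in $C^1$, but phrasing the estimate through $D_ef$ matches Proposition 11.3 directly.
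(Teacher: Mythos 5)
Your proof is correct and follows exactly the route the paper intends: the paper itself only states that this proposition holds ``with verbatim the same proof'' as \cite[Proposition 4.1]{W7}, and that proof is precisely your argument — membership in $D_S$ from the inclusion in Proposition 11.3 with $\theta\in\{0,1\}$, the integral representation (3.1) combined with the $D_ef$-estimate of Proposition 11.3 to bound the difference of $G_S$ by $2j|\tilde{\eta}-\eta|_{0S}$, integration via $I_S$, and the conversion $|\tilde{\eta}-\eta|_{0S}\le S|\tilde{\eta}-\eta|_{1,0S}$ using $\eta(0)=\tilde{\eta}(0)=0$ to produce the constant $2jS(S+1)$. No gaps; nothing further is needed.
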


\begin{prop}
$\lim_{S\searrow0}A_S(\phi,0)=0$.
\end{prop}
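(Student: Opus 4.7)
The first step is to unravel the definitions: since $A_S = I_S\circ G_S$ and $G_S(\phi,0)(s) = F_S(P_S\phi + Z_S\cdot 0)(s) - f(\phi) = f((P_S\phi)_s) - f(\phi)$ for $s\in[0,S]$, we obtain
\[
A_S(\phi,0)(t) = \int_0^t\bigl[f((P_S\phi)_s) - f(\phi)\bigr]\,ds,\qquad (A_S(\phi,0))'(t) = f((P_S\phi)_t) - f(\phi),
\]
for $t\in[0,S]$. The resulting estimate
\[
|A_S(\phi,0)|_{1,0S} \le (1+S)\,\max_{0\le s\le S}|f((P_S\phi)_s) - f(\phi)|
\]
reduces the problem to showing that this maximum tends to $0$ as $S\searrow 0$.

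The key observation is that the prolongation $P_S\phi$ is just the restriction of a single fixed element $\tilde{\phi}\in C^1_\infty$ defined by $\tilde{\phi}(t)=\phi(t)$ for $t\le 0$ and $\tilde{\phi}(t)=\phi(0)+t\phi'(0)$ for $t>0$; indeed $\tilde{\phi}$ is continuously differentiable on $\R$ because the one-sided derivatives agree at $0$. For $s\in[0,S]$ and $u\le 0$ a direct check of the two cases $s+u\le 0$ and $0<s+u\le S$ shows that $(P_S\phi)_s(u) = \tilde{\phi}(s+u)$; hence $(P_S\phi)_s = \tilde{\phi}_s$ as elements of $C^1$.

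Now the continuity of the segment evaluation $E^1_\infty: C^1_\infty\times\R\to C^1$ (the $T=\infty$ case of Proposition 10.1(i)) makes the curve $\R\ni s\mapsto\tilde{\phi}_s\in C^1$ continuous at $s=0$ with value $\tilde{\phi}_0 = \phi$. Composing with the continuous map $f:U\to\R^n$ at $\phi$ yields continuity of $s\mapsto f(\tilde{\phi}_s) = f((P_S\phi)_s)$ at $s=0$ with value $f(\phi)$. Given $\varepsilon>0$, pick $\delta>0$ with $|f(\tilde{\phi}_s) - f(\phi)|<\varepsilon$ whenever $|s|<\delta$; then for every $S\in(0,\delta)$ one has $\max_{0\le s\le S}|f((P_S\phi)_s)-f(\phi)|\le\varepsilon$, and therefore $|A_S(\phi,0)|_{1,0S}\to 0$ as $S\searrow 0$, as required.

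The only substantive point in this plan is the identification $(P_S\phi)_s = \tilde{\phi}_s$, which is an elementary case distinction; everything else is reduction to the already-established continuity of segment evaluation and of $f$. There is no serious obstacle.
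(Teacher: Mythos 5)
Your argument is correct and is essentially the proof the paper relies on (the paper simply cites the verbatim proof of Proposition 4.2 in \cite{W7}, which likewise writes $A_S(\phi,0)(t)=\int_0^t\bigl(f((P_S\phi)_s)-f(\phi)\bigr)ds$ and reduces the claim to continuity of the segment map $s\mapsto(P_S\phi)_s$ at $s=0$ together with continuity of $f$ at $\phi$). Your identification $(P_S\phi)_s=\tilde{\phi}_s$ with a fixed prolongation $\tilde{\phi}\in C^1_\infty$, combined with Proposition 10.1(i), handles the $S$-dependence of $P_S\phi$ cleanly, and the estimate $|A_S(\phi,0)|_{1,0S}\le(1+S)\max_{0\le s\le S}|f((P_S\phi)_s)-f(\phi)|$ is exactly what is needed.
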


\begin{prop}
There exist $S_{\phi}\in(0,T_{\phi})$ and an open neighbourhood $W_{\phi}$ of $\phi$ in $V_{\phi}$ such that for all $\chi\in W_{\phi}$, for all $S\in(0,S_{\phi}]$, and all
$\eta\in C^1_{0S,0}$ and $\tilde{\eta}\in C^1_{0S,0}$ with $|\eta|_{1,0S}\le\frac{\epsilon_{\phi}}{2}$ and  $|\tilde{\eta}|_{1,0S}\le\frac{\epsilon_{\phi}}{2}$,
$$
(\chi,\eta)\in D_S,\,(\chi,\tilde{\eta})\in D_S,
$$
$$
|A_S(\chi,\eta)|_{1,0S}<\frac{\epsilon_{\phi}}{2}\quad\text{and}\quad
|A_S(\chi,\tilde{\eta})-A_S(\chi,\eta)|_{1,0S}\le\frac{1}{2}|\tilde{\eta}-\eta|_{1,0S}.
$$
\end{prop}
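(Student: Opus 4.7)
The plan is to combine the Lipschitz estimate of Proposition 11.4 with the vanishing at the base point from Proposition 11.5, and to propagate a bound obtained at $S=S_\phi$ down to every smaller $S$ by a simple restriction argument.

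First I would choose $S_\phi\in(0,T_\phi)$ small enough to force $2j_\phi S_\phi(S_\phi+1)\le\tfrac{1}{2}$, so that Proposition 11.4 immediately yields
$$
|A_S(\chi,\tilde\eta)-A_S(\chi,\eta)|_{1,0S}\le\tfrac{1}{2}|\tilde\eta-\eta|_{1,0S}
$$
for every $S\in(0,S_\phi]$, every $\chi\in V_\phi$, and every $\eta,\tilde\eta\in C^1_{0S,0}$ satisfying $|\eta|_{1,0S},|\tilde\eta|_{1,0S}\le\epsilon_\phi/2<\epsilon_\phi$. That same proposition simultaneously delivers $(\chi,\eta),(\chi,\tilde\eta)\in D_S$.

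For the invariance bound $|A_S(\chi,\eta)|_{1,0S}<\epsilon_\phi/2$ I would shrink $S_\phi$ once more, using Proposition 11.5, so as to arrange $|A_{S_\phi}(\phi,0)|_{1,0S_\phi}<\epsilon_\phi/8$. Since $A_{S_\phi}$ is $C^1_F$-smooth (and hence continuous) into the Banach space $C^1_{0S_\phi,0}$, continuity of $\chi\mapsto A_{S_\phi}(\chi,0)$ at $\phi$ supplies an open neighbourhood $W_\phi\subset V_\phi$ of $\phi$ on which $(\chi,0)\in D_{S_\phi}$ and $|A_{S_\phi}(\chi,0)|_{1,0S_\phi}<\epsilon_\phi/4$.

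The crucial observation is the monotonicity of this last bound in $S$. For $S\le S_\phi$ and $\chi\in W_\phi$, the prolongations $P_{S_\phi}\chi$ and $P_S\chi$ coincide on $(-\infty,S]$ by the definition of the $P_T$'s, and $Z_{S_\phi}0=0=Z_S0$, so the integrand $G_S(\chi,0)(s)=f((P_S\chi)_s)-f(\chi)$ agrees with $G_{S_\phi}(\chi,0)(s)$ for $s\in[0,S]$. Consequently $A_S(\chi,0)=A_{S_\phi}(\chi,0)|_{[0,S]}$, and since $|\cdot|_{1,0S}$ is a supremum over the subinterval $[0,S]\subset[0,S_\phi]$,
$$
|A_S(\chi,0)|_{1,0S}\le|A_{S_\phi}(\chi,0)|_{1,0S_\phi}<\epsilon_\phi/4.
$$
Applying the contraction estimate above with $\tilde\eta$ replaced by $\eta$ and $\eta$ by $0$ then gives $|A_S(\chi,\eta)-A_S(\chi,0)|_{1,0S}\le\tfrac{1}{2}|\eta|_{1,0S}\le\epsilon_\phi/4$, and a triangle inequality produces $|A_S(\chi,\eta)|_{1,0S}<\epsilon_\phi/2$, as needed. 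The only nonroutine step is this restriction identity $A_S(\chi,0)=A_{S_\phi}(\chi,0)|_{[0,S]}$, which is a direct unwinding of the definitions of $P_T$, $Z_T$, $F_T$, and $I_T$; everything else is a bookkeeping combination of Propositions 11.4 and 11.5.
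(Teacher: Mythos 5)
Your argument is correct and follows essentially the route the paper intends: the paper gives no new proof here but declares the proof of \cite[Proposition 4.3]{W7} valid verbatim, and that proof is exactly your combination of the Lipschitz bound (Proposition 11.4) with smallness of $A_S(\phi,0)$ (Proposition 11.5), continuity of $\chi\mapsto A_{S_\phi}(\chi,0)$, and the observation that $A_S(\chi,0)$ is the restriction of $A_{S_\phi}(\chi,0)$ to $[0,S]$, which gives the bound uniformly in $S\in(0,S_\phi]$. No gaps.
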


For each $S\in(0,S_{\phi}]$ now the uniform contraction result Theorem 5.2 applies to the map 
$$
W_{\phi}\times\{\eta\in C^1_{0S,0}:|\eta|_{1,0S}<\epsilon_{\phi}\}\ni(\chi,\eta)\mapsto A_S(\chi,\eta)\in C^1_{0S,0},
$$
with $M=M_{\phi}=\{\eta\in C^1_{0S,0}:|\eta|_{1,0S}\le\frac{\epsilon_{\phi}}{2}\}$, and yields a $C^1_F$-map
$$
W_{\phi}\ni\chi\mapsto\eta_{\chi}\in C^1_{0S,0}
$$
given by $\eta_{\chi}\in M_{\phi}$ and $A_S(\chi,\eta_{\chi})=\eta_{\chi}$. As the maps $P_S$ and $C^1_{0S,0}\stackrel{Z_S}{\to}C^1_S$ are linear and continuous it follows from linearity of differentiation and by means of the chain rule that also the map
$$
\Sigma_{\phi}:W_{\phi}\ni\chi\mapsto P_S\chi+Z_S\eta_{\chi}\in C^1_S
$$
is $C^1_F$-smooth. An application of the chain rule to the compositions of this map with the continuous linear maps $E^1_S(\cdot,t):C^1_S\to C^1$, $0\le t\le S$, yields that all maps
$$
W_{\phi}\ni\chi\mapsto E^1_S(\Sigma_{\phi}(\chi),t)\in C^1,\quad 0\le t\le S,
$$
are $C^1_F$-smooth.  As $E^1_S$ is continuous we obtain that the composition
$$
[0,S]\times W_{\phi}\ni(t,\chi)\mapsto E^1_S(\Sigma_{\phi}(\chi),t)\in C^1
$$ 
is continuous.

\cite[Proposition 4.4]{W7} showed that the restriction of the map $\Sigma_{\phi}$ to the solution manifold provides us with solutions of the initial value problem (11.2). It remains valid, with the same proof, and is restated as follows.

\begin{prop}
For every $S\in(0,S_{\phi}]$ and for every $\chi\in W_{\phi}\cap X_f$ the function $x=x^{(\chi)}=\Sigma_{\phi}(\chi)$ is  a solution of Eq. (1.1) on $[0,S]$, with $x_0=\chi$ and $x_t\in X_f$ for $0 \le t\le S$.
\end{prop}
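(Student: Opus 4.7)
The plan is to unwind the fixed-point identity $A_S(\chi,\eta_\chi)=\eta_\chi$ and verify the three claims directly. Using $A_S=I_S\circ G_S$ together with the definition of $G_S$ from Section~11, the identity becomes
$$\eta_\chi(t)=\int_0^t\bigl[f((P_S\chi)_s+(Z_S\eta_\chi)_s)-f(\chi)\bigr]\,ds\qquad(0\le t\le S).$$
Set $x=\Sigma_\phi(\chi)=P_S\chi+Z_S\eta_\chi\in C^1_S$, so this reads $\eta_\chi(t)=\int_0^t[f(x_s)-f(\chi)]\,ds$. For $t\le 0$ one has $(Z_S\eta_\chi)(t)=0$ and $(P_S\chi)(t)=\chi(t)$, so $x$ and $\chi$ agree on $(-\infty,0]$, which gives $x_0=\chi$.

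Next I would establish the differential equation on $[0,S]$. Since $(\chi,\eta_\chi)\in D_S$, the map $s\mapsto x_s\in C^1$ is continuous on $[0,S]$ by Proposition~10.1(i) with values in $U$, so $s\mapsto f(x_s)$ is continuous; the fundamental theorem of calculus then yields $\eta_\chi'(t)=f(x_t)-f(\chi)$ on $[0,S]$. Since $(P_S\chi)'(t)=\chi'(0)$ for $0\le t\le S$, it follows that
$$x'(t)=\chi'(0)+\eta_\chi'(t)=\chi'(0)+f(x_t)-f(\chi).$$
The hypothesis $\chi\in X_f$ forces $\chi'(0)=f(\chi)$, and the two $f(\chi)$ terms cancel to give $x'(t)=f(x_t)$.

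Finally, the invariance $x_t\in X_f$ for $0\le t\le S$ is immediate: membership $x_t\in U$ is built into $(\chi,\eta_\chi)\in D_S$, which by definition says $P_S\chi+Z_S\eta_\chi\in\mathrm{dom}_S$; and $(x_t)'(0)=x'(t)=f(x_t)$ is precisely the defining relation of $X_f$ applied to $x_t$. I do not anticipate any real obstacle here; the argument is a mechanical unwinding of the fixed-point construction. The only point worth flagging is that the $-\tau\circ f\circ pr_1$ term in the definition of $G_T$ was inserted precisely so that $G_S(\chi,\eta_\chi)$ lies in $C_{0S,0}$, and that exactly this term is compensated by the assumption $\chi'(0)=f(\chi)$ when one reads off $x'(t)=f(x_t)$.
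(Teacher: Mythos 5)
Your proposal is correct, and it is essentially the paper's argument: the paper simply cites \cite[Proposition 4.4]{W7}, whose proof is exactly this unwinding of the fixed-point identity $A_S(\chi,\eta_\chi)=\eta_\chi$ into the integrated equation, using $\chi\in X_f$ (i.e.\ $\chi'(0)=f(\chi)$) to cancel the $\tau\circ f\circ pr_1$ term and $(\chi,\eta_\chi)\in D_S$ to get $x_t\in U$. Your handling of $x_0=\chi$, the fundamental theorem of calculus step, and the invariance $x_t\in X_f$ matches that proof.
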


The restriction
$$
[0,S]\times (W_{\phi}\cap X_f)\ni(t,\chi)\mapsto E^1_S(\Sigma_{\phi}(\chi),t)\in C^1
$$
is continuous, and the restrictions 
$$
W_{\phi}\cap X_f\ni\chi\mapsto E^1_S(\Sigma_{\phi}(\chi),t)\in C^1,\quad 0\le t\le S,
$$
are $C^1_F$-maps from their domains in the $C^1_F$-submanifold $X_f$ into $C^1$. 

\section{The semiflow on the solution manifold}

The uniqueness results \cite[Propositions 4.5 and 5.1]{W7} remain valid, with the same proofs. As in \cite[Section 5]{W7}
we find maximal solutions $x^{\phi}:(-\infty,t_{\phi})\to\R^n$, $0<t_{\phi}\le\infty$, of the initial value problems
$$
x'(t)=f(x_t)\quad\text{for}\quad t>0,\quad x_0=\phi\in X_f,
$$
which are solutions on $[0,t_{\phi})$ and have the property that any other solution on some interval with left endpoint $0$, of the same initial value problem, is a restriction of $x^{\phi}$. The relations
$$
\Omega_f=\{(t,\phi)\in[0,\infty)\times X_f:t<t_{\phi}\},\quad\Sigma_f(t,\phi)=x^{\phi}_t
$$
define a semiflow $\Sigma_f:\Omega_f\to X_f$ on $X_f$, compare \cite[Proposition 5.2]{W7}. In \cite[Proposition 5.3]{W7} and in its proof the words {\it continuously differentiable} can everywhere be replaced by the expression $C^1_F$-smooth. Thus 
$\Sigma_f$ is continuous, with each domain
$$
\Omega_{f,t}=\{\phi\in X_f:(t,\phi)\in\Omega_f\},  \quad t\ge0,
$$
an open subset of $X_f$ and the
time-$t$-map
$$
\Sigma_{f,t}:\Omega_{f,t}\to X_f,\quad\Sigma_{f,t}(\phi)=\Sigma_f(t,\phi),\quad t\ge0,
$$
$C^1_F$-smooth in case $\Omega_{f,t}\neq\emptyset$.

\cite[Proposition 5.5]{W7} and its proof remain valid. In the proof of \cite[Proposition 6.1]{W7} the words {\it continuously differentiable} can everywhere be replaced by the expression $C^1_F$-smooth. This yields
$$
D_2\Sigma_f(t,\phi)\chi=D\Sigma_{f,t}(\phi)\chi=v_t^{\phi,\chi}
$$
with the unique maximal continuously differentiable solution $v=v^{\phi,\chi}$ of the initial value problem
$$
v'(t)=Df(\Sigma_f(t,\phi))v_t\quad\text{for}\quad t>0,\quad v_0=\chi\in T_{\phi}X_f.
$$

\newpage

\begin{center}
Part III
\end{center}

\section{On locally bounded delay, the extension property, and prolongation and restriction}

Assume as in Part II that $f:C^1\supset U\to\R^n$ is $C^1_F$-smooth and that $f$ has property (e). It is convenient from here on to abbreviate $X=X_f$, $\Omega=\Omega_f$, and $\Sigma=\Sigma_f$. Let a stationary point $\bar{\phi}\in X$ of $\Sigma$ be given, $\Sigma(t,\bar{\phi})=\bar{\phi}$ for all $t\ge0$. Then $\bar{\phi}$ is constant. (Proof of this: The solution $x$ of Eq. (1.1) on $[0,\infty)$ with $x_0=\phi$ satisfies 
$x(t)=x_t(0)=\Sigma(t,\phi)(0)=\phi(0)$ for all $t\ge0$. For all $s<0$ we have $x(s)=\phi(s)=\Sigma(-s,\phi)(s)=
x_{-s}(s)=x(0)=\phi(0)$.)

\medskip

Choose an open neighbourhood $N$ of $\bar{\phi}$ in $U$ and $d>0$ according to property (lbd). We restate
\cite[Proposition 2.1]{W8} as follows.

\begin{prop}
For every $\phi\in N$ we have
\begin{equation*}
Df(\phi)\psi=0\quad\text{for all}\quad\psi\in C^1\quad\text{with}\quad\psi(s)=0\quad\text{on}\quad[-d,0],
\end{equation*}
and
\begin{equation*}
D_ef(\phi)\chi=0\quad\text{for all}\quad\chi\in C\quad\text{with}\quad\chi(s)=0\quad\text{on}\quad[-d,0].
\end{equation*}
\end{prop}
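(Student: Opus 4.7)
The plan is to prove the two assertions in sequence: the $C^1$ statement follows directly from property (lbd), and the $C$ statement is obtained from it by a density argument, using the continuity in property (e).

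For the first assertion, fix $\psi\in C^1$ with $\psi(s)=0$ on $[-d,0]$ and $\phi\in N$. Since $N\subset U$ is open I can choose $h_0>0$ so that $\phi+h\psi\in N$ for all $|h|<h_0$. For such $h$ both $\phi$ and $\phi+h\psi$ lie in $N$ and coincide on $[-d,0]$, so property (lbd) yields $f(\phi+h\psi)=f(\phi)$. The difference quotient defining $Df(\phi)\psi$ is therefore identically zero, and $Df(\phi)\psi=0$.

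For the second assertion, given $\chi\in C$ with $\chi(s)=0$ on $[-d,0]$, I aim to produce a sequence $(\psi_n)_1^\infty$ in $C^1$ with $\psi_n(s)=0$ on $[-d,0]$ and $\psi_n\to\chi$ in $C$. Since $D_ef(\phi)\in L_c(C,\R^n)$ is continuous by property (e), and since $D_ef(\phi)$ extends $Df(\phi)$, the first part of the proof then gives
\[
D_ef(\phi)\chi=\lim_{n\to\infty}D_ef(\phi)\psi_n=\lim_{n\to\infty}Df(\phi)\psi_n=0.
\]
To build $\psi_n$, I extend $\chi$ to a continuous map $\tilde\chi:\R\to\R^n$ by setting $\tilde\chi(s)=0$ for $s>0$; this is continuous because $\chi(0)=0$, and $\tilde\chi$ actually vanishes on all of $[-d,\infty)$. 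Choose a mollifier $\rho\in C^\infty(\R)$ with $\rho\ge0$, $\int_\R\rho=1$, and support contained in $[-1,0]$, and set $\rho_n(t)=n\rho(nt)$, which is supported in $[-1/n,0]$. Let $\psi_n$ be the restriction of $\tilde\chi*\rho_n$ to $(-\infty,0]$. Then $\psi_n$ is smooth, and for $s\in[-d,0]$ and $t\in[-1/n,0]$ one has $s-t\in[s,s+1/n]\subset[-d,1/n]$, so $\tilde\chi(s-t)=0$ and hence $\psi_n(s)=0$ on $[-d,0]$. Standard locally uniform convergence of mollifications of continuous functions gives $\psi_n\to\tilde\chi$ uniformly on compact subsets of $\R$, so $|\psi_n-\chi|_j\to0$ for every $j\in\N$, i.e.\ $\psi_n\to\chi$ in $C$.

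The only delicate point is the simultaneous demand that $\psi_n$ be $C^1$ and vanish precisely on $[-d,0]$: if the kernel were supported in $[0,1]$ instead of $[-1,0]$ the convolution would not vanish up to the endpoint $s=0$, so placing the support to the left of the origin is essential. Everything else reduces to property (lbd) (which powers step one) combined with the continuity of $D_ef(\phi)$ furnished by property (e), using that the linear extension of $Df(\phi)$ from $C^1$ to $C$ is uniquely determined.
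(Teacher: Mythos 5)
Your proof is correct: the first assertion follows exactly as you argue from the openness of $N$ and property (lbd), and the second from approximating $\chi$ by smooth maps vanishing on $[-d,0]$ together with the continuity of $D_ef(\phi)$ guaranteed by property (e). The paper itself gives no proof here but simply restates \cite[Proposition 2.1]{W8}, and your argument is essentially the standard one used there, so there is nothing to add beyond noting that any $C^1$-approximation of $\chi$ vanishing on $[-d,0]$ (not necessarily by mollification with a left-supported kernel) would serve equally well.
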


Set $\bar{\phi}_d=R_{d,1}\bar{\phi}=\bar{\phi}|_{[-d,0]}$. As $\bar{\phi}$ is constant we have 
\begin{equation*}
P_{d,1}\bar{\phi}_d=\bar{\phi}\in N,
\end{equation*} 
and it follows that there exist neighbourhoods $U_d$ of $\bar{\phi}_d$ in $C^1_d$ with $P_{d,1}U_d\subset N$. Due to the chain rule the map
$$
f_d:C^1_d\supset U_d\to\R^n,\quad f_d(\phi)=f(P_{d,1}\phi),
$$ 
is $C^1_F$-smooth, with
\begin{equation*}
Df_d(\phi)\chi=Df(P_{d,1}\phi)P_{d,1}\chi.
\end{equation*}

According to \cite[Proposition 2.2]{W8} $f_d$ has property (e). Results from \cite{W1,W2} apply and show that the equation
\begin{equation}
x'(t)=f_d(x_t)
\end{equation}
(with segments $x_t:[-d,0]\ni s\mapsto x(t+s)\in\R^n$) defines a continuous semiflow $\Sigma_d:\Omega_d\to X_d$ on the submanifold 
\begin{equation*}
X_d=\{\phi\in U_d:\phi'(0)=f_d(\phi)\},\quad\text{codim}\,X_d=n,
\end{equation*}
of the Banach space $C^1_d$. In the terminology of the present paper, the manifold $X_d$ and all solution operators $\Sigma_d(t,\cdot)$, $t\ge0$, with non-empty domain are $C^1_F$-smooth.

\medskip

The proofs of \cite[Propositions 2.3-2.5]{W8} remain valid without change. We restate the result as follows.

\begin{prop}
(i) $X_d=R_{d,1}(X\cap N\cap R_{d,1}^{-1}(U_d))$

\medskip

(ii) For every $\phi\in X\cap N\cap R_{d,1}^{-1}(U_d)$, 
\begin{equation*}
T_{R_{d,1}\phi}X_d=R_{d,1}T_{\phi}X.
\end{equation*}
(iii) For $(t,\phi)\in\Omega$ with $\Sigma([0,t]\times\{\phi\})\subset N\cap R_{d,1}^{-1}(U_d)$,
\begin{equation*}
(t,R_{d,1}\phi)\in \Omega_d\quad\text{and}\quad \Sigma_d(t,R_{d,1}\phi)=R_{d,1}\Sigma(t,\phi).
\end{equation*}
(iv) If $(t,\chi)\in\Omega_d$ and if $x:(-\infty,t]\to\R^n$ given by $x(s)=x^{\chi}(s)$ on $[-d,t]$ and by $x(s)=(P_{d,1}\chi)(s)$ for $s<-d$ satisfies $\{x_s:0\le s\le t\}\subset N$ then
\begin{equation*}
(t,P_{d,1}\chi)\in\Omega\quad\text{and}\quad R_{d,1}\Sigma(t,P_{d,1}\chi)=\Sigma_d(t,\chi).
\end{equation*}
\end{prop}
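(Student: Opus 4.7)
The four assertions rest on two common ingredients: the relation $R_{d,1}\circ P_{d,1}=\mathrm{id}_{C^1_d}$ together with the fact that $P_{d,1}R_{d,1}\chi-\chi$ vanishes on $[-d,0]$ for every $\chi\in C^1$, and property (lbd) in both its function and derivative forms. Since by construction $P_{d,1}U_d\subset N$, for every $\phi\in N\cap R_{d,1}^{-1}(U_d)$ the companion point $P_{d,1}R_{d,1}\phi$ also lies in $N$, agrees with $\phi$ on $[-d,0]$, and hence $f(\phi)=f(P_{d,1}R_{d,1}\phi)$ by (lbd). Differentiating the resulting identity $f=f\circ P_{d,1}\circ R_{d,1}$ on $N\cap R_{d,1}^{-1}(U_d)$ via the chain rule yields $Df(\phi)\chi=Df(P_{d,1}R_{d,1}\phi)P_{d,1}R_{d,1}\chi$ for every $\chi\in C^1$. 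These two identities will do all the work.

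For (i), the inclusion $\supset$ follows by taking $\phi$ in the right-hand set, letting $\psi=R_{d,1}\phi\in U_d$, and computing $\psi'(0)=\phi'(0)=f(\phi)=f(P_{d,1}\psi)=f_d(\psi)$. The inclusion $\subset$ is obtained by the prolongation $\phi=P_{d,1}\psi$, which lies in $N$, satisfies $R_{d,1}\phi=\psi\in U_d$, and $\phi'(0)=\psi'(0)=f_d(\psi)=f(\phi)$. For (ii), I would invoke Proposition 9.5 together with the chain-rule formula $Df_d(R_{d,1}\phi)\eta=Df(P_{d,1}R_{d,1}\phi)P_{d,1}\eta$ already noted after Proposition 13.1. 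The derivative identity from the previous paragraph then gives $\chi'(0)=Df(\phi)\chi$ iff $(R_{d,1}\chi)'(0)=Df_d(R_{d,1}\phi)(R_{d,1}\chi)$, so $R_{d,1}(T_\phi X)\subset T_{R_{d,1}\phi}X_d$. The reverse inclusion is obtained by prolonging any $\eta\in T_{R_{d,1}\phi}X_d$ via $\chi=P_{d,1}\eta$ and using $R_{d,1}P_{d,1}=\mathrm{id}$ together with the same identity.

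For (iii), starting from the maximal solution $x=x^\phi$ of (1.1) with segments in $N\cap R_{d,1}^{-1}(U_d)$ on $[0,t]$, the restriction $y=x|_{[-d,t]}$ yields segments $y_\tau=R_{d,1}x_\tau\in U_d$ satisfying $y'(\tau)=x'(\tau)=f(x_\tau)=f(P_{d,1}R_{d,1}x_\tau)=f_d(y_\tau)$ on $[0,t]$ by (lbd); uniqueness of solutions in $X_d$ then identifies $y_\tau$ with $\Sigma_d(\tau,R_{d,1}\phi)$. For (iv), the prolonged function $x$ is $C^1$ on $(-\infty,t]$ because the degree-one polynomial continuation defining $P_{d,1}\chi$ matches both the value and the derivative of $x^\chi$ at $-d$. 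One then checks $x_0=P_{d,1}\chi$ directly, and verifies $x'(\tau)=(x^\chi)'(\tau)=f_d(R_{d,1}x_\tau)=f(P_{d,1}R_{d,1}x_\tau)=f(x_\tau)$ on $[0,t]$ via (lbd), using that $x_\tau\in N$ by hypothesis while $P_{d,1}R_{d,1}x_\tau=P_{d,1}x^\chi_\tau\in P_{d,1}U_d\subset N$. Uniqueness of the maximal solution of (1.1) through $P_{d,1}\chi$ then yields $(t,P_{d,1}\chi)\in\Omega$ with $\Sigma(t,P_{d,1}\chi)=x_t$, and restricting to $[-d,0]$ gives the claimed identity.

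The main obstacle is essentially bookkeeping: each invocation of (lbd) requires both arguments of $f$ to lie in the same neighbourhood $N$, which forces us to work not with $\phi\in N$ alone but with $\phi\in N\cap R_{d,1}^{-1}(U_d)$, so that both $\phi$ and $P_{d,1}R_{d,1}\phi$ are admissible. Once this is tracked carefully and the master identity $f=f\circ P_{d,1}\circ R_{d,1}$ on $N\cap R_{d,1}^{-1}(U_d)$ is isolated, the four statements reduce to short computations combining the chain rule, the idempotency $R_{d,1}P_{d,1}=\mathrm{id}$, and the uniqueness theorems for the semiflows $\Sigma$ and $\Sigma_d$.
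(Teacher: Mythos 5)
Your proof is correct and follows essentially the same route as the paper, which gives no new argument here but states that the proofs of \cite[Propositions 2.3--2.5]{W8} remain valid: those proofs rest on exactly the ingredients you isolate, namely the identity $f=f\circ P_{d,1}\circ R_{d,1}$ on the open set $N\cap R_{d,1}^{-1}(U_d)$ furnished by (lbd), the relation $R_{d,1}\circ P_{d,1}=\mathrm{id}_{C^1_d}$, the formula $Df_d(\psi)=Df(P_{d,1}\psi)\circ P_{d,1}$ together with the tangent-space characterizations of $X$ and $X_d$, and the uniqueness of solutions underlying both semiflows $\Sigma$ and $\Sigma_d$. I see no gap worth flagging.
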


Proposition 13.2 (iii) shows that $\bar{\phi}_d$ is a stationary point of the semiflow $\Sigma_d$.  

\medskip

For $t\ge0$ consider the operators $T_t=D_2\Sigma(t,\bar{\phi})$ on $T_{\bar{\phi}}X$ and $T_{d,t}=D_2\Sigma_d(t,\bar{\phi}_d)$ on $T_{\bar{\phi}_d}X_d$. The proof of \cite[Corollary 2.6]{W8} remains valid. 
We state the result as follows.

\begin{cor}
(i) For $(t,\phi)\in\Omega$ as in Proposition 13.2 (iii) and for all $\chi\in T_{\phi}X$,
\begin{equation*}
R_{d,1}\chi\in T_{R_{d,1}\phi}X_d\quad\text{and}\quad R_{d,1}D_2\Sigma(t,\phi)\chi=D_2\Sigma_d(t,R_{d,1}\phi)R_{d,1}\chi.
\end{equation*}
(ii) For all $\chi\in T_{\bar{\phi}}X$ and for all $t\ge0$,
\begin{equation*}
R_{d,1}\chi\in T_{\bar{\phi}_d}X_d\quad\text{and}\quad R_{d,1}T_t\chi=T_{d,t}R_{d,1}\chi.
\end{equation*}
\end{cor}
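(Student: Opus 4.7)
The plan is to derive (i) by differentiating the intertwining identity of Proposition 13.2 (iii) in a neighborhood of $\phi$ in $X$, and then to deduce (ii) as the special case $\phi=\bar{\phi}$, using that $\bar{\phi}$ is stationary.

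First I would pass from the pointwise hypothesis of Proposition 13.2 (iii) to a neighborhood version. The open set $\Omega_{f,t}$ contains $\phi$, and by continuity of $\Sigma$ on $\Omega$ the preimage $\Sigma^{-1}(N\cap R_{d,1}^{-1}(U_d))$ is open in $\Omega$ and contains the compact slice $[0,t]\times\{\phi\}$. The tube lemma (or Proposition 1.2) then yields an open neighborhood $V$ of $\phi$ in $X$ with $V\subset\Omega_{f,t}$ and
\begin{equation*}
\Sigma([0,t]\times V)\subset N\cap R_{d,1}^{-1}(U_d).
\end{equation*}
Proposition 13.2 (iii) applied to each $\psi\in V$ gives $\Sigma_d(t,R_{d,1}\psi)=R_{d,1}\Sigma(t,\psi)$ on $V$.

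Next I would differentiate this identity using the calculus of $C^1_F$-maps on submanifolds from Section 4. On $V$, the left-hand side is the composition $\Sigma_{d,t}\circ R_{d,1}|_V$, where $R_{d,1}|_V$ maps $V$ into $X_d$ by Proposition 13.2 (i) and is $C^1_F$-smooth as the restriction of the continuous linear map $R_{d,1}$, while $\Sigma_{d,t}$ is $C^1_F$-smooth on its (open) domain in $X_d$ by Part II. The right-hand side is the composition $R_{d,1}\circ\Sigma_{f,t}|_V$, with $\Sigma_{f,t}|_V$ $C^1_F$-smooth by Section 12. Applying the chain rule for $C^1_F$-maps between submanifolds to $\chi\in T_\phi X$, and using Proposition 13.2 (ii) to identify $R_{d,1}\chi\in T_{R_{d,1}\phi}X_d$ as the tangent image of $R_{d,1}|_V$, I obtain
\begin{equation*}
D_2\Sigma_d(t,R_{d,1}\phi)\,R_{d,1}\chi \;=\; T_\phi(\Sigma_{d,t}\circ R_{d,1})\chi \;=\; T_\phi(R_{d,1}\circ\Sigma_{f,t})\chi \;=\; R_{d,1}\,D_2\Sigma(t,\phi)\chi,
\end{equation*}
which together with the tangent-inclusion $R_{d,1}\chi\in T_{R_{d,1}\phi}X_d$ establishes (i).

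For (ii) I would simply specialize (i) to $\phi=\bar{\phi}$ and arbitrary $t\ge0$. Since $\bar{\phi}$ is stationary one has $t_{\bar{\phi}}=\infty$, so $(t,\bar{\phi})\in\Omega$, and $\Sigma([0,t]\times\{\bar{\phi}\})=\{\bar{\phi}\}\subset N$; also $R_{d,1}\bar{\phi}=\bar{\phi}_d\in U_d$, so $\bar{\phi}\in R_{d,1}^{-1}(U_d)$. Hence the hypothesis of Proposition 13.2 (iii) is met for every $t\ge0$, and (i) yields (ii) directly.

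The main obstacle I anticipate is purely organizational rather than technical: one must carefully distinguish the $C^1_F$-smooth maps involved (maps between the submanifolds $X\subset C^1$ and $X_d\subset C^1_d$ versus maps into the ambient Fréchet and Banach spaces) in order to apply the chain rule of Section 4 correctly. The tube step and the differentiation itself are then routine once the identity $\Sigma_d(t,R_{d,1}\cdot)=R_{d,1}\Sigma(t,\cdot)$ has been extended to an open neighborhood of $\phi$ in $X$.
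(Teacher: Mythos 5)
Your proof is correct, but it follows a different route from the one the paper relies on. The paper gives no self-contained argument for this corollary: it states that the proof of Corollary 2.6 of [W8] remains valid, and that argument rests on the representation of the derivatives recalled at the end of Section 12, namely $D_2\Sigma(t,\phi)\chi=v^{\phi,\chi}_t$ with $v^{\phi,\chi}$ the maximal solution of the variational equation $v'(s)=Df(\Sigma(s,\phi))v_s$, $v_0=\chi$, together with the analogous representation for $D_2\Sigma_d$; using Proposition 13.1 and the identity $Df_d(\psi)\rho=Df(P_{d,1}\psi)P_{d,1}\rho$ one checks that the $C^1_d$-segments of $v^{\phi,\chi}$ solve the variational equation along $\Sigma_d(\cdot,R_{d,1}\phi)=R_{d,1}\Sigma(\cdot,\phi)$ with initial value $R_{d,1}\chi$, and uniqueness for that linear problem yields the intertwining relation pointwise at $\phi$, with no need to extend the conjugacy of Proposition 13.2 (iii) beyond the single point $\phi$. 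You instead extend the conjugacy to a tube $[0,t]\times V$ and differentiate it with the chain rule of Section 4; this is legitimate and self-contained within the present paper, but the tube step deserves a word more: $[0,t]\times V\subset\Omega$ follows from the openness of $\Omega_{f,t}$ in $X$ stated in Section 12 (any $\psi\in\Omega_{f,t}$ already has $[0,t]\times\{\psi\}\subset\Omega$), and then continuity of $\Sigma$ on $[0,t]\times\Omega_{f,t}$, compactness of $[0,t]$ and openness of $N\cap R_{d,1}^{-1}(U_d)$ give the tube; also, the $C^1_F$-smoothness of $\Sigma_{d,t}$ on its open domain in $X_d$ comes from the Banach space results cited in Section 13 ([W1], [W2]), not from Part II. With these points made precise, your chain-rule computation and the specialization $\phi=\bar{\phi}$, $t\ge0$, for part (ii) are fine. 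As for what each approach buys: yours uses only the conjugacy plus the manifold chain rule and is arguably more geometric, while the variational-equation argument works pointwise at $\phi$, avoids the neighbourhood extension altogether, and exhibits $R_{d,1}D_2\Sigma(t,\phi)\chi$ directly as a segment of a solution of a linear delay equation, which is the form exploited again in Section 14.
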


From \cite[Sections 3.5 and 4.1-4.3]{HKWW} and from \cite{K2} we get
local stable, center, and unstable manifolds of $\Sigma_d$  at $\bar{\phi}_d\in X_d\subset C^1_d$, all of them 
$C^1_F$-smooth. 

\section{Decomposition of the tangent space}

Let $Y=T_{\bar{\phi}}X$. In this section we recall from \cite[Section 3]{W8} the definitions of the linear stable, center, and unstable spaces of the operators $T_t:Y\to Y$, $t\ge0$.  

\medskip

The linear stable space in $Y$ is defined by
$$
Y_s=R_{d,1}^{-1}Y_{d,s}
$$ 
with the linear stable space $Y_{d,s}$ of the strongly continuous semigroup
$(T_{d,t})_{t\ge0}$ on the tangent space $Y_d=T_{\bar{\phi}_d}X_d\subset C^1_d$. 
We have $Y_{d,s}=Y_d\cap C_{d,s}$  with the linear stable space $C_{d,s}$ of the strongly continuous semigroup of solution operators $T_{d,e,t}:C_d\to C_d$, $t\ge0$, which is defined by the equation
\begin{equation}
v'(t)=D_ef_d(\bar{\phi}_d)v_t.
\end{equation}
Let $C_{d,c}$ and $C_{d,u}$ denote the finite-dimensional linear center and unstable spaces of the semigroup on $C_d$. Each $\chi\in C_{d,c}\oplus C_{d,u}$ uniquely defines an analytic solution $v=v^{\chi}$
on $\R$ of Eq. (14.1). The injective map
$$
I:C_{d,c}\oplus C_{d,u}\ni\chi\mapsto\chi|_{(-\infty,0]}\in C^1
$$
is linear, and continuous (as its domain is finite-dimensional) . The center and unstable spaces in $Y$ are defined as
$$
Y_c=IC_{d,c}\quad\text{and}\quad Y_u=IC_{d,u},
$$
respectively. They are finite-dimensional and
the maps $T_t$, $t\ge0$, act as isomorphisms on each of them. The stable space $Y_s$ is closed and positively invariant under each map $T_t$, $t\ge0$, and we have the decomposition
$$
Y=Y_s\oplus Y_c\oplus Y_u.
$$
Finally, observe
$$
Y_u\subset B^1_a
$$
since each $v^{\chi}$, $\chi\in C_{d,u}$, and its derivative both have limit $0$ at $-\infty$.

\section{The local stable manifold}

We begin with the local stable manifold  $W^s_d\subset X_d$ of the semiflow $\Sigma_d$ at the stationary point $\bar{\phi}_d\in X_d\subset C^1_d$ as it was obtained in \cite{HKWW}.  It is easy to see that 
$W^s_d$ is a continuously differentiable submanifold of the Banach space $C^1_d$ which is locally positively invariant under $S_d$, with tangent space
\begin{equation*}
T_{\bar{\phi}_d}W^s_d=Y_{d,s}
\end{equation*}
at $\bar{\phi}_d$, and that it has the following poperties (I) and (II), for some 
$\beta>0$ chosen with
\begin{equation*}
\Re\,z<-\beta<0
\end{equation*}
for all $z$ with $\Re\,z<0$  in the spectrum of the generator of the semigroup on $C_d$, and for some $\gamma>\beta$.

\medskip

(I) There are an open neighbourhood $\tilde{W}^s_d$ of $\bar{\phi}_d$ in $W^s_d$ such that
$[0,\infty)\times\tilde{W}^s_d\subset\Omega_d$ and $\Sigma_d([0,\infty)\times\tilde{W}^s_d)\subset W^s_d$, and a constant $\tilde{c}>0$ such that for all $\psi\in\tilde{W}^s_d$ and all $t\ge0$,
\begin{equation*}
|\Sigma_d(t,\psi)-\bar{\phi}_d|_{d,1}\le\tilde{c}\,e^{-\gamma t}|\psi-\bar{\phi}_d|_{d,1}.
\end{equation*}

(II) There exists a constant $\bar{c}>0$ such that each $\psi\in X_d$ with $[0,\infty)\times\{\psi\}\subset\Omega_d$ and
\begin{equation*}
e^{\beta t}|\Sigma_d(t,\psi)-\bar{\phi}_d|_{d,1}<\bar{c}\quad\text{for all}\quad t\ge0
\end{equation*}
belongs to $W^s_d$.

\medskip

The codimension of $W^s_d$ in $C^1_d$ is equal to 
\begin{equation*}
n+\dim\,Y_{d,c}+\dim\,Y_{d,u}=n+\dim\,C_{d,c}+\dim\,C_{d,u}.
\end{equation*}
As the continuous linear map $R_{d,1}:C^1\to C^1_d$ is surjective we can apply Proposition 7.1  and  obtain an open neighbourhood
$V$ of $ \bar{\phi}$ in $N\subset U\subset C^1$ so that
\begin{equation*}
W^s=W^s(\bar{\phi})=V\cap R_{d,1}^{-1}(W^s_d)
\end{equation*}
is a $C^1_F$-submanifold of $C^1$ with codimension $n+\dim\,C_{d,c}+\dim\,C_{d,u}$ and tangent space
\begin{equation*}
T_{\bar{\phi}}W^s=R^{-1}_{d,1}(T_{\bar{\phi}_d}W^s_d)=R^{-1}_{d,1}(Y_{d,s})=Y_s.
\end{equation*}
The next proposition shows that $W^s$ is the desired local stable manifold of $\Sigma$ at $\bar{\phi}$.

\begin{prop}
(i) $W^s\subset X$, and $W^s$ is locally positively invariant.

\medskip

(ii) There are an open neighbourhood $\tilde{V}$ of $\bar{\phi}$ in $V$ with $[0,\infty)\times(\tilde{V}\cap W^s)\subset\Omega$ and a constant $\tilde{c}>0$
such that for all $\phi\in\tilde{V}\cap W^s$ the solution $x:\R\to\R^n$ on $[0,\infty)$ of Eq. (1.1) with $x_0=\phi$ satisfies
\begin{equation*}
|x(t)-\bar{\phi}(0)|+|x'(t)|\le\tilde{c} e^{-\gamma t}|R_{d,1}\phi-\bar{\phi}_d|_{d,1}\quad\text{for all}\quad t\ge0.
\end{equation*}
(iii) There are an open neighbourhood $\hat{V}$ of $\bar{\phi}$ in $V$ and a constant $\hat{c}>0$ such that for every solution $x:\R\to\R^n$ on $[0,\infty)$ of Eq. (1.1) with $x_0\in\hat{V}\cap X$ and
\begin{equation*}
|x(t)-\bar{\phi}(0)|+|x'(t)|\le\hat{c}\,e^{-\beta t}\quad\text{for all}\quad t\ge0
\end{equation*}
we have $x_0\in W^s$.
\end{prop}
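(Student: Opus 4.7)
The plan is to transfer all three assertions from the corresponding properties (I) and (II) of the Banach-space local stable manifold $W^s_d\subset X_d$ to $W^s=V\cap R_{d,1}^{-1}(W^s_d)$ via the restriction $R_{d,1}$, using Proposition~13.2 to identify segments of $\Sigma$-trajectories with segments of $\Sigma_d$-trajectories, Proposition~13.1 together with property (lbd) to handle values of $f$, and the fact that $\bar\phi$ is constant so that $\bar\phi_s=\bar\phi$ for every $s$.

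For (i), I first verify $W^s\subset X$: given $\phi\in W^s$, since $R_{d,1}\phi\in W^s_d\subset X_d$, we get $\phi'(0)=(R_{d,1}\phi)'(0)=f_d(R_{d,1}\phi)=f(P_{d,1}R_{d,1}\phi)=f(\phi)$, where the last equality follows from (lbd) after shrinking $V$ so that $P_{d,1}R_{d,1}V\subset N$ (then both $\phi$ and $P_{d,1}R_{d,1}\phi$ lie in $N$ and coincide on $[-d,0]$). For local positive invariance, shrink $V$ further to $V'$ with $R_{d,1}V'\subset\tilde W^s_d$. Given $\phi\in V'\cap W^s$, extend the $\Sigma_d$-orbit of $R_{d,1}\phi$ by $\phi|_{(-\infty,-d]}$ to a continuous function $x:\R\to\R^n$; the exponential bound from (I), combined with the smallness of the Fr\'echet seminorms $|\phi-\bar\phi|_{1,j}$ (ensured by shrinking $V'$ once more), makes the segments $x_s$ stay in $N\cap R_{d,1}^{-1}(U_d)$ for every $s\ge 0$. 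Proposition~13.2\,(iv) then yields $(s,\phi)\in\Omega$ with $R_{d,1}\Sigma(s,\phi)=\Sigma_d(s,R_{d,1}\phi)\in W^s_d$ and $\Sigma(s,\phi)\in V$, i.e.\ $\Sigma(s,\phi)\in W^s$.

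For (ii), the construction above identifies, for $\phi\in\tilde V:=V'\cap W^s$, the unique solution $x$ of Eq.~(1.1) with $x_0=\phi$; evaluating the $C^1_d$-bound in property (I) at the endpoint $s=0$ gives
$$
|x(t)-\bar\phi(0)|+|x'(t)|\le|\Sigma_d(t,R_{d,1}\phi)-\bar\phi_d|_{d,1}\le\tilde c\,e^{-\gamma t}|R_{d,1}\phi-\bar\phi_d|_{d,1}
$$
for all $t\ge0$. For (iii), take $\phi\in\hat V\cap X$ subject to $|x(t)-\bar\phi(0)|+|x'(t)|\le\hat c\,e^{-\beta t}$ for all $t\ge0$. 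For $\hat V\subset V$ small and $\hat c$ small, the seminorms $|x_s-\bar\phi|_{1,j}$ are simultaneously controlled by the decay on the ``future'' part ($s+\tau\ge0$) and by $|\phi-\bar\phi|_{1,j}$ on the ``past'' part ($s+\tau<0$), so $x_s\in N\cap R_{d,1}^{-1}(U_d)$ for every $s\ge0$. Proposition~13.2\,(iii) then gives $\Sigma_d(t,R_{d,1}\phi)=R_{d,1}x_t$ for all $t\ge0$, and bounding the $C^1_d$-norm yields $|\Sigma_d(t,R_{d,1}\phi)-\bar\phi_d|_{d,1}\le\hat c\,e^{\beta d}e^{-\beta t}$ for $t\ge d$, while for $t\in[0,d]$ the same quantity is bounded by continuity and the smallness of $|\phi-\bar\phi|_{d,1}+\hat c$. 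Choosing $\hat c$ sufficiently small yields $e^{\beta t}|\Sigma_d(t,R_{d,1}\phi)-\bar\phi_d|_{d,1}<\bar c$ for all $t\ge0$, and property (II) applied to $\Sigma_d(\cdot,R_{d,1}\phi)$ gives $R_{d,1}\phi\in W^s_d$, i.e.\ $\phi\in W^s$.

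The main obstacle is to choose the neighborhoods $\tilde V$, $\hat V$, and the constant $\hat c$ uniformly with respect to $t\ge0$ so that the ``stay in $N\cap R_{d,1}^{-1}(U_d)$'' hypothesis of Proposition~13.2\,(iii) and (iv) is met along entire semi-trajectories. This requires careful interplay between the Fr\'echet seminorms $|\cdot|_{1,j}$ on $C^1$, the Banach norm $|\cdot|_{d,1}$ on $C^1_d$, and the pointwise $\R^n$-estimates for $x$ and $x'$, and it exploits essentially that $\bar\phi$ is constant, so the ``past'' of $x_s-\bar\phi_s$ reduces to $\phi-\bar\phi$ evaluated at shifted arguments while its ``future'' is governed by the exponential decay delivered by (I) respectively by the hypothesis in (iii).
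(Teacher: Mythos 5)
Your transfer argument via $R_{d,1}$, property (lbd)/Proposition 13.1, Proposition 13.2, and the properties (I), (II) of $W^s_d$ is correct and is exactly the route the paper takes (it proves Proposition 15.1 ``exactly as \cite[Propositions 4.1, 4.2]{W8}, using the properties of $W^s_d$ stated above''). Two minor touch-ups: Proposition 13.2\,(iv) is stated for the prolongation $P_{d,1}\chi$, so for your glued function with tail $\phi|_{(-\infty,-d]}$ you should verify directly via (lbd) that it solves Eq.\ (1.1) (the same computation as in 13.2\,(iv)), and since $\tilde W^s_d$ is open only in $W^s_d$ the shrinking should be phrased as $R_{d,1}(V'\cap W^s)\subset\tilde W^s_d$ (via a $C^1_d$-open set meeting $W^s_d$ inside $\tilde W^s_d$) rather than $R_{d,1}V'\subset\tilde W^s_d$.
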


Proposition  15.1  is proved exactly as  \cite[Propositions 4.1, 4.2]{W8}, using the properties of $W^s_d$ stated above.

\section{The local unstable manifold}

In this section all segments  $x_t$ are defined on $(-\infty,0]$.
Fix some $a>0$ and consider the Banach spaces $B_a\subset C$ and  $B^1_a\subset C^1$ introduced in Section 1.  It is easy to see that the linear inclusion maps
\begin{equation*}
j_0:B_a\to C\quad\text{and}\quad j_1:B^1_a\to C^1
\end{equation*}
are continuous, as well as the restriction and prolongation maps 
\begin{equation*}
R_{a,d,1}:B^1_a\ni\phi\mapsto R_{d,1}\phi\in C^1_d\quad\text{and}\quad P_{a,d,1}:C^1_d\ni\chi\mapsto P_{d,1}\chi\in B^1_a.
\end{equation*}
The set $U_a=j_1^{-1}(N)\cap R_{a,d,1}^{-1}(U_d)\subset B^1_a$ is open and contains $\bar{\phi}$, and the $C^1_F$-map
\begin{equation*}
f_a:U_a\to\R^n,\quad f_a(\phi)=f(j_1\phi),
\end{equation*}
satisfies $f_a(\bar{\phi})=0$. Notice that every solution of the equation
\begin{equation}
x'(t)=f_a(x_t)
\end{equation}
on some interval also is a solution of Eq. (1.1) on this interval. The proof of \cite[Proposition 5.1]{W8} remains valid. Therefore we have
\begin{equation}
f_a(\phi)=f_a(\psi)\,\,\text{for all}\,\,\phi\in U_a,\,\,\psi\in U_a\,\,\text{with}\,\,\phi(s)=\psi(s)\,\,\text{on}\,\,[-d,0],
\end{equation}
each derivative $Df_a(\phi):B^1_a\to\R^n$, $\phi\in U_a$, has a linear extension $D_ef_a(\phi):B_a\to\R^n$, and the map
$$
U_a\times B_a\ni(\phi,\chi)\mapsto D_ef_a(\phi)\chi\in\R^n
$$
is continuous. Now results from \cite{W3} show that $X_a=\{\phi\in U_a:\phi'(0)=f_a(\phi)\}$ is a $C^1_F$-submanifold
of $B^1_a$, that the solutions of Eq. (16.1)
define a continuous semiflow $\Sigma_a:\Omega_a\to X_a$ on $X_a$, and that there is a local unstable manifold
$W^u_a\subset X_a$ at the stationary point $\bar{\phi}\in W^u_a$. $W^u_a$ is a $C^1_F$-submanifold of $B^1_a$ consisting of data $\phi\in X_a$ which are solutions of Eq. (16.1) on $(-\infty,0]$ with $\phi_s\to\bar{\phi}$ as $s\to-\infty$, and
$$
T_{\bar{\phi}}W^u_a=Y_u.
$$
(In order to verify the last equation observe that in \cite{W3} the tangent space of $W^u_a$ at $\bar{\phi}$ is obtained as the vector space of all maps
$\hat{\chi}:(-\infty,0]\to\R^n$ with $\hat{\chi}_0=\chi\in C_{d,u}$ which for some $t>0$ and for all integers
$j<0$ satisfy
$$
\hat{\chi}_{jt}=\Lambda^{-j}\chi
$$
where $\Lambda:C_{d,u}\to C_{d,u}$ is the isomorphism whose inverse is given by $T_{d,e,t}$. The maps in the vector space $Y_u=IC_{d,u}$ share the said property. The dimension of both vector spaces equals $\dim\,C_{d,u}$.)

\medskip

Moreover, there exist $\overline{\beta}>\overline{\gamma}>0$ and $c_u>0$ so that

\medskip

(I) $|\phi_s-\bar{\phi}|_{a,1}\le c_ue^{\bar{\beta}s}|\phi-\bar{\phi}|_{a,1}$ for all $\phi\in W^u_a$ and $s\le0$,

\medskip

and

\medskip

(II) for every solution $\psi\in B^1_a$ of Eq. (16.1) on $(-\infty,0]$ with
$$
\sup_{s\le0}|\psi_s-\bar{\phi}|_{a,1}e^{-\bar{\gamma}s}<\infty
$$
there exists $s_{\psi}\le0$ with $\psi_s\in W^u_a$ for all $s\le s_{\psi}$.

\medskip

From a manifold chart at $\bar{\phi}$ we obtain $\epsilon>0$ and a $C^1_F$-map
$$
w^u_a:Y_u(\epsilon)\to B^1_a,\quad Y_u(\epsilon)=\{\phi\in Y_u:|\phi|_{a,1}<\epsilon\},
$$
with $w^u_a(0)=\bar{\phi}$, $w^u_a(Y_u(\epsilon))$ an open subset of $W^u_a$, and $Dw^u_a(0)\eta=\eta$ for all $\eta\in Y_u$. Proposition 7.2 applies to the $C^1_F$-map $j_1\circ w^u_a$. So we may assume that
$$
W^u=W^u(\bar{\phi})=j_1w^u_a(Y_u(\epsilon))
$$
is a $C^1_F$-submanifold of the Fr\'echet space $C^1$ with 
$$
T_{\bar{\phi}}W^u=j_1Dw^u_a(0)Y_u=Y_u.
$$
The proof of \cite[Proposition 5.2]{W8} remains valid in the present setting. We state the result about the properties of the local unstable manifold $W^u$ as follows.

\begin{prop}
(i) Every $\phi\in W^u$ is a solution of Eq. (1.1) on $(-\infty,0]$, with  $\phi_s\to\bar{\phi}$ as $s\to-\infty$, and for all $s\le0$, 
$$
|\phi(s)-\bar{\phi}(0)|\le  c_ue^{\bar{\beta}s}|\phi-\bar{\phi}|_{a,1}\quad\text{and}\quad|\phi'(s)|\le 
c_ue^{\bar{\beta}s}|\phi-\bar{\phi}|_{a,1}.
$$
(ii) For every $\psi\in X$ which is a solution of Eq. (1.1) on $(-\infty,0]$ with
$$
\sup_{s\le0}e^{-\bar{\gamma}s}|\psi(s)-\bar{\phi}(0)|<\infty\quad\text{and}\quad\sup_{s\le0}e^{-\bar{\gamma}s}|\psi'(s)|<\infty
$$
there exists $s(\psi)\le0$ with $\psi_s\in W^u$ for all $s\le s(\psi)$.
\end{prop}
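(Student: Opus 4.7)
My plan for part~(i) is to observe that every $\phi\in W^u=j_1w^u_a(Y_u(\epsilon))$ is of the form $\phi=j_1\tilde\phi$ for some $\tilde\phi\in W^u_a\subset X_a$; since $j_1$ is the natural inclusion $B^1_a\hookrightarrow C^1$, the function $\phi:(-\infty,0]\to\R^n$ coincides with $\tilde\phi$ pointwise. By the construction of $W^u_a$, $\tilde\phi$ is a solution of Eq.~(16.1) on $(-\infty,0]$, and the identity $f_a=f\circ j_1$ together with $j_1(U_a)\subset N\subset U$ turns it into a solution of Eq.~(1.1) on the same interval. The pointwise decay then drops out of property~(I) by evaluating the suprema defining $|\cdot|_a$ at $t=0$: from $\tilde\phi_s(0)=\phi(s)$, $(\tilde\phi_s-\bar{\phi})'(0)=\phi'(s)$ (using $\bar{\phi}'\equiv0$),
$$|\phi(s)-\bar{\phi}(0)|\le|\tilde\phi_s-\bar{\phi}|_a\le|\tilde\phi_s-\bar{\phi}|_{a,1}\le c_u e^{\bar{\beta} s}|\tilde\phi-\bar{\phi}|_{a,1},$$
and the analogous chain yields the bound for $|\phi'(s)|$.

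For part~(ii) I first translate the pointwise decay of $\psi$ and $\psi'$ into a $B^1_a$-segment bound. Setting $M_1=\sup_{s\le0}e^{-\bar{\gamma} s}|\psi(s)-\bar{\phi}(0)|$ and $M_2=\sup_{s\le0}e^{-\bar{\gamma} s}|\psi'(s)|$, the substitution $u=s+t$ with $s,t\le0$ and $\bar{\gamma}+a>0$ gives
$$|\psi_s-\bar{\phi}|_a\le\sup_{t\le0}M_1e^{\bar{\gamma}(s+t)}e^{at}=M_1e^{\bar{\gamma} s},$$
and the same estimate holds for $|(\psi_s-\bar{\phi})'|_a$. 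Thus $\psi_s-\bar{\phi}\in B^1_a$, $\sup_{s\le0}e^{-\bar{\gamma} s}|\psi_s-\bar{\phi}|_{a,1}\le M_1+M_2<\infty$, and $|\psi_t-\bar{\phi}|_{a,1}\to0$ as $t\to-\infty$. Since $j_1$ and $R_{a,d,1}$ are continuous and $N$, $U_d$ are open, this forces the existence of some $s_1\le 0$ with $\psi_t\in U_a$ for every $t\le s_1$.

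I then pass to the shifted map $\tilde\psi(u)=\psi(u+s_1)$ on $(-\infty,0]$. Every segment $\tilde\psi_t=\psi_{t+s_1}$ with $t\le0$ lies in $U_a$, $\tilde\psi\in B^1_a$ by the previous bounds, and
$$\tilde\psi'(t)=\psi'(t+s_1)=f(\psi_{t+s_1})=f(j_1\tilde\psi_t)=f_a(\tilde\psi_t),$$
so $\tilde\psi$ solves Eq.~(16.1) on $(-\infty,0]$ and satisfies $\sup_{s\le0}e^{-\bar{\gamma} s}|\tilde\psi_s-\bar{\phi}|_{a,1}\le(M_1+M_2)e^{\bar{\gamma} s_1}<\infty$. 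Property~(II) of $W^u_a$ supplies $s_{\tilde\psi}\le0$ with $\psi_{s+s_1}=\tilde\psi_s\in W^u_a$ for all $s\le s_{\tilde\psi}$. Finally, because $w^u_a(Y_u(\epsilon))$ is a $B^1_a$-open neighborhood of $\bar{\phi}$ in $W^u_a$ and $|\psi_t-\bar{\phi}|_{a,1}\to0$ as $t\to-\infty$, a further reduction of $s$ ensures $\psi_t\in w^u_a(Y_u(\epsilon))$, hence $\psi_t=j_1\psi_t\in W^u$, for all $t\le s(\psi)$ with an appropriate $s(\psi)\le 0$.

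\textbf{Main obstacle.} The delicate step is the transition from Eq.~(1.1) to Eq.~(16.1): the given $\psi$ is only guaranteed to satisfy Eq.~(1.1), and \emph{all} its segments need not sit in the Banach-space domain $U_a\subset B^1_a$, so property~(II) of $W^u_a$ cannot be invoked for $\psi$ directly. The shift-by-$s_1$ device, fueled by the exponentially decaying upper bound on $|\psi_s-\bar{\phi}|_{a,1}$ obtained in the second paragraph, circumvents this difficulty and places us in the Banach-space framework where the machinery from \cite{W3} recorded in properties~(I) and~(II) applies verbatim.
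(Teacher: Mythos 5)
Your argument is correct and follows the route the paper intends: the paper simply remarks that the proof of \cite[Proposition 5.2]{W8} carries over, and that proof is exactly your reconstruction — part (i) by pulling $\phi$ back through $j_1$ to $W^u_a$, using $f_a=f\circ j_1$, $U_a\subset j_1^{-1}(N)$, and evaluating property (I) at $0$; part (ii) by converting the pointwise exponential bounds into the segment estimate $|\psi_s-\bar\phi|_{a,1}\le (M_1+M_2)e^{\bar\gamma s}$, shifting so that all segments lie in $U_a$, applying property (II), and finally using that $w^u_a(Y_u(\epsilon))$ is relatively open in $W^u_a$ together with $\psi_t\to\bar\phi$ in $B^1_a$. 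No gaps; the shift-by-$s_1$ device and the final openness argument are precisely the points that need care, and you handle both.
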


\section{Local center manifolds}

In this section we assume
$$
\{0\}\neq Y_c
$$
which is equivalent to
$$
\{0\}\neq C_{d,c}.
$$
In the sequel we recall the steps which in \cite[Section 6]{W8} led to a local center manifold at $\bar{\phi}$ which is $C^1_{MB}$-smooth, and point out the observation which yields $C^1_F$-smoothness.  

\medskip

The approach from \cite[Section 6]{W8} first follows constructions from the proof of \cite[Theorem 2.1]{K2} which were done for the case $\bar{\phi}_d=0$. Therefore we introduce
$V_d=U_d-\bar{\phi}_d$ and the $C^1_F$-map $g_d:V_d\to\R^n$. Then $g_d(0)=0$ and $Dg_d(0)=Df_d(\bar{\phi}_d)$. 

\medskip

There is a decomposition
$$
C^1_d=C^1_{d,s}\oplus C_{d,c}\oplus C_{d,u},\quad C^1_{d,s}=C^1_d\cap C_{d,s},
$$
into closed subspaces which defines a projection $P^1_{d,c}:C^1_d\to C^1_d$ onto $C_{d,c}$, and there is a norm $\|\cdot\|_{d,1}$ on $C^1_d$ which is equivalent to $|\cdot|_{d,1}$ and whose restriction to $C_{d,c}\setminus\{0\}$ is
$C^{\infty}$-smooth. 

\medskip

Next there exists $\Delta>0$ with
$$
N_{\Delta}=\{\phi\in C^1_d:\|\phi\|_{d,1}<\Delta\}
$$
contained in $V_d$ so that the restricted remainder map
$$
N_{\Delta}\ni\phi\mapsto g_d(\phi)-Dg_d(0)\phi\in\R^n
$$ 
has a global continuation
$$
r_{d,\Delta}:C^1_d\to\R^n
$$
with Lipschitz constant
$$
\lambda=\sup_{\phi\neq\psi}\frac{\|r_{d,\Delta}(\phi)-r_{d,\Delta}(\psi)\|_{d,1}}{\|\phi-\psi\|_{d,1}}\quad <\quad1.
$$

\medskip

The desired local center manifold at $\bar{\phi}\in X$ will be given, up to translation, by segments $(-\infty,0]\to\R^n$ of solutions on $\R$ of the equation
\begin{equation}
x'(t)=Dg_d(0)x_t+r_{d,\Delta}(x_t)\quad\text{(with segments in}\quad C^1_d)
\end{equation}
which do not grow too much at $\pm\infty$.

\medskip

For $\eta>0$ let $C^1_{d,\eta}$ denote the Banach space of all continuous maps $u:\R\to C^1_d$ with
$$
\sup_{t\in\R}e^{-\eta|t|}|u(t)|_{d,1}<\infty
$$
and the norm given by the preceding supremum. There exists $\eta_1>0$ so that for every $\phi\in C_{d,c}$ there is a unique continuously differentiable map
$$
x^{[\phi]}:\R\to\R^n
$$
which satisfies Eq. (17.1) for all $t\in\R$ and $P^1_{d,c}x^{[\phi]}_0=\phi$ and has the continuous map $\R\ni t\mapsto x^{[\phi]}_t\in C^1_d$  contained in the space $C^1_{d,\eta_1}$.
Observe that we have
$$
x^{[0]}(t)=0\quad\text{for all}\quad t\in\R.
$$

\medskip

Incidentally, from here on the proof in \cite[Section 6]{W8} deviates from the approach in \cite{K2}.

\medskip

Now consider the map
$$
J:C_{d,c}\ni\phi\mapsto\bar{\phi}+x^{[\phi]}|_{(-\infty,0]}\in C^1.
$$
Observe that the proof of \cite[Corollary 6.2 ]{W8} shows that the map $J$ is in fact  $C^1_F$-smooth,  not only $C^1_{MB}$-smooth, and
$$
DJ(0)\phi=I\phi\quad\text{for all}\quad\phi\in C_{d,c}.
$$
As $C_{d,c}$ is finite-dimensional and as $I$ is injective Proposition 7.2 yields an open neighbourhood $N_{d,c}$ of $0$ in $C_{d,c}$ so that the image
$$
W^c=J(N_{d,c})
$$
is a $C^1_F$-submanifold of the Fr\'echet space $C^1$, with
$$
T_{\bar{\phi}}W^c=IC_{d,c}=Y_c.
$$
By continuity of $J$ and $J(0)=\bar{\phi}$ we may assume $J(N_{d,c})\subset N\subset U$.
By continuity of the map
$$
C_{d,c}\ni\phi\mapsto R_{d,1}(J(\phi)-\bar{\phi})\in C^1_d
$$
at $0\in C_{d,c}$ we also may assume that for all $\phi\in N_{d,c}$ we have 
$$
\|x^{[\phi]}_0\|_{d,1}<\Delta\quad\text{for all}\quad \phi\in N_{d,c}
$$ 
or, $x^{[\phi]}_0\in N_{\Delta}$ for all $\phi\in N_{d,c}$, with segments 
$x^{[\phi]}_0$ defined on $[-d,0]$.

\medskip

We take $W^c$, with tangent space $Y_c$ at the stationary point $\bar{\phi}\in X$, as the desired local center manifold of the semiflow $\Sigma$ and verify that it has the appropriate properties. 
Following the proof of \cite[Proposition 6.3]{W8} we get
$$
W^c\subset X.
$$
Next, choose an open neighbourhood $U_{\ast}$ of $\bar{\phi}$ in $N\subset U$ so small that 
$$
R_{d,1}U_{\ast}\subset U_d\cap(N_{\Delta}+\bar{\phi}_d)
$$
and for all $\psi\in U_{\ast}$,
$$
P^1_{d,c}R_{d,1}(\psi-\bar{\phi})\in N_{d,c}.
$$
Then the proofs of \cite[Proposition 6.4, Proposition 6.5]{W8} remain valid. We state the result as follows.

\begin{prop}
(i) (Local positive invariance) For every  $(t,\psi)\in\Omega$ with $\psi\in W^c\subset X$ and
$\Sigma([0,t]\times\{\psi\})\subset U_{\ast}$ we have $\Sigma([0,t]\times\{\psi\})\subset W^c$.

\medskip

(ii) For every solution $y:\R\to\R^n$ of Eq. (1.1) on $\R$ with $y_t\in U_{\ast}$ for all $t\in\R$ we have $y_t\in W^c$ for all $t\in\R$.
\end{prop}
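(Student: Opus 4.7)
The strategy is to identify those segments of solutions of Eq. (1.1) that stay inside $U_{\ast}$ with solutions of the auxiliary equation (17.1), and then to invoke the two uniqueness properties underlying $x^{[\phi]}$: ODE-type uniqueness coming from the globally Lipschitz right-hand side of (17.1), and uniqueness of solutions defined on $\R$ that lie in $C^1_{d,\eta_1}$ with prescribed $P^1_{d,c}$-projection at $0$.

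For part (i), write $\psi = J(\phi) = \bar{\phi} + x^{[\phi]}|_{(-\infty,0]}$ with $\phi \in N_{d,c}$, and let $z$ be the solution of (1.1) with $z_0 = \psi$, so that $z_s = \Sigma(s,\psi) \in U_{\ast}$ for $s \in [0,t]$. Set $w(\tau) = z(\tau) - \bar{\phi}(0)$. For $\tau \le 0$, $w(\tau) = x^{[\phi]}(\tau)$ by construction of $J$. For $\tau \in [0,t]$, property (lbd) together with Proposition 13.1 reduces $f$ on segments close to $\bar{\phi}$ to $f_d \circ R_{d,1}$, and the definition of $g_d$ gives $w'(\tau) = g_d(R_{d,1}w_\tau)$; since $R_{d,1}w_\tau \in N_{\Delta}$ by the defining property of $U_{\ast}$, this right-hand side equals $Dg_d(0)R_{d,1}w_\tau + r_{d,\Delta}(R_{d,1}w_\tau)$. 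Thus $w$ and $x^{[\phi]}$ both satisfy (17.1) with identical segment at $\tau = 0$; the Lipschitz constant $\lambda < 1$ (plus the linear term) yields uniqueness, so $w = x^{[\phi]}$ on $(-\infty,t]$.

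Now fix $s \in [0,t]$. The shift $\tau \mapsto x^{[\phi]}(\tau + s)$ still solves (17.1) on $\R$ and still belongs to $C^1_{d,\eta_1}$, because shifts only rescale the exponential bound. Let $\phi_s = P^1_{d,c}R_{d,1}w_s$; the defining property of $U_{\ast}$ gives $\phi_s \in N_{d,c}$. The uniqueness characterization of $x^{[\phi_s]}$ identifies the shifted trajectory with $x^{[\phi_s]}$, and evaluating at $u \le 0$ gives
\[
z_s(u) = \bar{\phi}(0) + w(s+u) = \bar{\phi}(0) + x^{[\phi_s]}(u) = J(\phi_s)(u),
\]
so $\Sigma(s,\psi) = J(\phi_s) \in W^c$. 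Part (ii) is the same reduction applied globally: if $y:\R\to\R^n$ solves (1.1) with $y_t \in U_{\ast}$ for all $t \in \R$, then $w = y - \bar{\phi}(0)$ solves (17.1) on all of $\R$, the curve $\tau \mapsto R_{d,1}w_\tau$ is uniformly $\|\cdot\|_{d,1}$-bounded by $\Delta$ and hence lies in $C^1_{d,\eta_1}$, and for each $t \in \R$ the shift by $t$ is identified by uniqueness with $x^{[\phi_t]}$ for $\phi_t = P^1_{d,c}R_{d,1}w_t \in N_{d,c}$; therefore $y_t = J(\phi_t) \in W^c$.

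The main obstacle is the bookkeeping that makes the two reductions compatible. The neighbourhood $U_{\ast}$ must simultaneously ensure (a) that the segments $R_{d,1}(z_\tau - \bar{\phi})$ actually sit inside $N_{\Delta}$, so that the globally Lipschitz extended equation (17.1) genuinely governs $w$, and (b) that the center-projections $P^1_{d,c}R_{d,1}(z_\tau - \bar{\phi})$ stay inside $N_{d,c}$, so that the uniqueness conclusion lands back inside $W^c = J(N_{d,c})$. Both conditions were deliberately built into the definition of $U_{\ast}$, and once invoked the proof reduces to the two uniqueness principles for (17.1) recorded earlier.
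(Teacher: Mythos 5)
Your overall route is exactly the one the paper intends: the paper itself only says that the proofs of [W8, Propositions 6.4 and 6.5] carry over, and those proofs do what you do — use the inclusions built into $U_{\ast}$ (together with (lbd) and $P_{d,1}U_d\subset N$) to show that $w=z-\bar{\phi}(0)$, resp.\ $w=y-\bar{\phi}(0)$, solves the modified equation (17.1), then identify shifted trajectories with $x^{[\phi_s]}$ via the defining uniqueness of $x^{[\phi]}$ in $C^1_{d,\eta_1}$ with prescribed $P^1_{d,c}$-projection, and conclude $\Sigma(s,\psi)=J(\phi_s)\in W^c$, resp.\ $y_t=J(\phi_t)\in W^c$. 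Your bookkeeping with $U_{\ast}$ (segments land in $N_\Delta+\bar{\phi}_d$, projections land in $N_{d,c}$) is correct, and the shift argument and the $C^1_{d,\eta_1}$-membership checks are fine.

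The one genuine gap is the step ``$w$ and $x^{[\phi]}$ both satisfy (17.1) with identical segment at $\tau=0$; the Lipschitz constant $\lambda<1$ (plus the linear term) yields uniqueness''. Forward uniqueness for an initial value problem with data in $C^1_d$ does \emph{not} follow from global Lipschitz continuity of the right-hand side with respect to $\|\cdot\|_{d,1}$: the Picard/Gronwall estimate produces $\sup_{[0,t]}|u'|\le L(1+t)\sup_{[0,t]}|u'|$, which is vacuous once $L\ge1$, and indeed the $1$-Lipschitz functional $\phi\mapsto\phi'(0)$ gives the equation $x'(t)=x'(t)$ with massive non-uniqueness. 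In the situation at hand uniqueness is true, but its proof needs the special structure: the linear part $Dg_d(0)=Df_d(\bar{\phi}_d)$ extends to $C_d$ by property (e), so it is controlled by the sup-norm of $u_t$ alone, while only the remainder $r_{d,\Delta}$, whose $\|\cdot\|_{d,1}$-Lipschitz constant is $\lambda<1$, touches the derivative terms, which can then be absorbed. This is precisely the content of [W8, Lemma 7.1], which the paper explicitly flags as the ingredient used in both parts of Proposition 17.1; it is not among the ``uniqueness principles recorded earlier'' in this paper, so you must either cite that lemma or supply the absorption argument. (A cosmetic point: the reduction of $f$ to $f_d\circ R_{d,1}$ on $U_{\ast}$ uses (lbd) together with $R_{d,1}U_{\ast}\subset U_d$ and $P_{d,1}U_d\subset N$, not Proposition 13.1, which concerns the vanishing of $Df(\phi)$ and $D_ef(\phi)$ on functions vanishing on $[-d,0]$.)
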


Observe that the proofs of both parts of Proposition 17.1   make use of \cite[Lemma 7.1]{W8} on uniqueness for an initial value problem with data in $C^1_d$.


\end{document}